\theoremstyle{plain}
\newtheorem{theorem}{Theorem}[section]
\newtheorem{proposition}[theorem]{Proposition}
\newtheorem{corollary}[theorem]{Corollary}
\newtheorem{lemma}[theorem]{Lemma}
\theoremstyle{definition}
\newtheorem{definition}{Definition}[section]
\theoremstyle{remark}
\newtheorem{remark}{\textbf{Remark}}[section]
\theoremstyle{example}
\numberwithin{equation}{section}
\title{Generalized weighted number operators on functionals of discrete-time normal martingales}
\author{Jing Zhang $^{1,2}$,\ \  Caishi Wang $^{1,}$\footnote{Author to whom correspondence should be addressed.
Electronic addresses: wangcs@nwnu.edu.cn},\ \  Lixia Zhang $^1$,\ \  Lu Zhang $^1$\\
$^1$ School of Mathematics and Statistics,
Northwest Normal University\\
Lanzhou 730070, People's Republic of China\\
$^2$ College of Science, Gansu Agricultural University\\
Lanzhou 730070, People's Republic of China}
\begin{document}
\maketitle

\noindent\textbf{Abstract.}\ \
Let $M$ be a discrete-time normal martingale that has the chaotic representation property. Then, from the space of square integrable functionals of $M$,
one can construct generalized functionals of $M$. In this paper, by using a type of weights,
we introduce a class of continuous linear operators acting on generalized functionals of $M$, which we call generalized weighted number (GWN) operators.
We prove that GWN operators can be represented in terms of generalized annihilation and creation operators (acting on generalized functionals of $M$).
We also examine commutation relations between a GWN operator and a generalized annihilation (or creation) operator,
and obtain several formulas expressing such commutation relations.
\vskip 2mm

\noindent\textbf{Keywords.}\ \  Discrete-time normal martingale; Gel'fand triple; Generalized functional; Generalized weighted number operator;
Generalized annihilation and creation operator; Commutation relations.
\vskip 2mm

\noindent\textbf{Mathematics Subject Classification (2010).} Primary: 60H40; Secondary: 81S25

\section{Introduction}

Hida's white noise analysis is essentially a theory of infinite dimensional calculus on generalized functionals of Brownian motion \cite{hida,huang,obata}.
In 1988, It\^{o} introduced his analysis of Poisson functionals \cite{ito}, which can be viewed as a theory of infinite dimensional
calculus on generalized functionals of Poisson martingale. As is known, both Brownian motion and Poisson martingale are continuous-time normal martingales.
There are theories of infinite dimensional calculus on generalized functionals of some other continuous-time processes in the literature
(see, e.g. \cite{albe,bar,hu,nunno,lee} and references therein).

In recent years, there has been much interest in functionals of discrete-time normal martingales.
Privault \cite{privault} developed his chaotic calculus for functionals of a discrete-time normal martingale.
Nourdin et al. \cite{nourdin} considered a normal approximation of Rademacher functionals (functionals of a special discrete-time normal martingale).
Wang et al. \cite{w-c-l} investigated the annihilation and creation operators acting on square integrable functionals of
a special class of discrete-time normal martingales from a perspective of quantum probability.
Krokowski et al. \cite{krokowski} obtained the Berry-Esseen bounds for multivariate normal approximation of functionals of Rademacher functionals.

Recall that a Gel'fand triple consists of three spaces in the way that $\mathcal{E}\subset \mathcal{H} \subset \mathcal{E}^*$,
where $\mathcal{H}$ is a Hilbert space, $\mathcal{E}$ is a countably-Hilbertian nuclear space that is embedded continuously and densely into $\mathcal{H}$,
and $\mathcal{E}^*$ is the the strong topology dual of $\mathcal{E}$. A Gel'fand triple is said to be complex if the three spaces involved are complex.

Let $M$ be a discrete-time normal martingale having the chaotic representation property and $\mathcal{L}^2(M)$ the complex Hilbert space of
square integrable functionals of $M$. Then, from $\mathcal{L}^2(M)$ and some positive self-adjoint operator $A$ densely-defined in it \cite{wang-chen-1},
one can construct a complex Gel'fand triple
\begin{equation}\label{eq-1-1}
\mathcal{S}(M) \subset \mathcal{L}^2(M) \subset \mathcal{S}^*(M).
\end{equation}
Conventionally, elements of $\mathcal{S}^*(M)$ are called generalized functionals of $M$, while elements of $\mathcal{S}(M)$ are known as testing functionals of $M$.
In 2015, Wang and Chen \cite{wang-chen-1} established a characterization theorem for generalized functionals of $M$,
i.e. elements of $\mathcal{S}^*(M)$, via the Fock transform.
Two years later, they \cite{wang-chen-3} obtained a similar result for continuous linear operators from $\mathcal{S}(M)$ to $\mathcal{S}^*(M)$ via the 2D-Fock transform.

It is known \cite{alicki, partha} that quantum Markov semigroups (QMS), which are quantum analogs of classical Markov semigroups in the theory of probability, play an important role
in describing the irreversible evolution of a quantum system interacting with the environment, i.e., an open quantum system.
Following the ideas of \cite{wang-tang}, one can consider, in the framework of $\mathcal{L}^2(M)$,
the existence of a QMS with a formal generator of the following form
\begin{equation}\label{eq-1-2}
\begin{split}
  \mathcal{L}(X)
    &= \mathrm{i}[H,X]\\
    &\quad  -\frac{1}{2}\sum_{j,k=0}^{\infty}w(j,k)\big[ X \big(\partial_j^*\partial_k\big)^*\partial_j^*\partial_k
           - 2 \big(\partial_j^*\partial_k\big)^* X\partial_j^*\partial_k + \big(\partial_j^*\partial_k\big)^*\partial_j^*\partial_kX\big],
\end{split}
\end{equation}
where $[\cdot,\cdot]$ means the Lie bracket, $\{\partial_k, \partial_k^*\mid k\geq 0\}$ are annihilation and creation operators on $\mathcal{L}^2(M)$
(see Appendix for their definitions), $H$ is a self-adjoint operator densely-defined in $\mathcal{L}^2(M)$, $w$ is some nonnegative function on $\mathbb{N}\times \mathbb{N}$,
and the variable $X$ ranges over all bounded operators on $\mathcal{L}^2(M)$.
From a physical point of view, a QMS with a formal generator of form (\ref{eq-1-2})
belongs to the category of quantum exclusion semigroups, and might serve as a model describing
an open quantum system consisting of an arbitrary number of identical Fermi particles,
where $H$ represents the Hamiltonian of the system and $\partial_j^*\partial_k$ represents the jump of particles from site $k$ to site $j$ with
the jump rate $\sqrt{w(j,k)}$.

To prove the existence of a QMS with a formal generator of form (\ref{eq-1-2}), one needs to deal with the double operator series
\begin{equation}\label{eq-1-3}
  \sum_{j,k=0}^{\infty}w(j,k)\big(\partial_j^*\partial_k\big)^*\partial_j^*\partial_k
  =\sum_{j,k=0}^{\infty}w(j,k)\partial_k^*\partial_j\partial_j^*\partial_k
\end{equation}
and its sum operator in the framework of $\mathcal{L}^2(M)$.
Recently, to deal with a double operator series like (\ref{eq-1-3}) and its sum operator,
Wang et al. \cite{wang-tang} actually  introduced the notion of weighted number operator in $\mathcal{L}^2(M)$ (for short, WN operators in $\mathcal{L}^2(M)$,
or WN operators).
And, also in the same paper, they actually obtained diagonal representations of WN operators and their commutation relations
with $\{\partial_k,\partial_k^*\mid k\geq 0\}$.

In the mathematical descriptions of quantum physics, many observables and quantities are not bona fide Hilbert space operators,
but can only be regarded as generalized operators (e.g., operators on the dual of a nuclear space).
This observation, together with the fact that $\mathcal{S}(M)\subset \mathcal{L}^2(M)\subset \mathcal{S}^*(M)$ is a complex Gel'fand triple,
implies that operators on $\mathcal{S}^*(M)$ can have potential application in quantum physics.
On the other hand, being defined in $\mathcal{L}^2(M)$, WN operators obviously belong to the category of Hilbert space operators.
Hence, it is natural and meaningful to extend the work of \cite{wang-tang} on WN operators to the more general case of $\mathcal{S}^*(M)$.
In this paper, we would like to make such an extension. More specifically, we would like to introduce $\mathcal{S}^*(M)$-analogs of WN operators,
and examine their properties in the framework of $\mathcal{S}^*(M)$.

The paper is organized as follows. In Section~\ref{sec-2}, we describe in detail the Gel'fand triple indicated in (\ref{eq-1-1}) and recall its main properties.
Section~\ref{sec-3} is our main work. Here, by using a type of weight functions, we first introduce a class of continuous linear operators on $\mathcal{S}^*(M)$,
which we call generalized weighted number operators on $\mathcal{S}^*(M)$ (for short, GWN operators on $\mathcal{S}^*(M)$, or  GWN operators).
We then compare GWN operators and WN operators,
and prove a formula that gives a representation of GWN operators in terms of generalized annihilation and creation operators on $\mathcal{S}^*(M)$.
Additionally, in the same section, we examine commutation relations between GWN operators and
generalized annihilation (creation) operators on $\mathcal{S}^*(M)$, and prove several formulas expressing such commutation relations.
Finally in Section~\ref{sec-4}, we provide an appendix about WN operators and their main properties,
which were essentially introduced and proven in \cite{wang-tang}.

Throughout this paper, $\mathbb{N}$ stands  for the set of all nonnegative integers and $\Gamma$
the finite power set of $\mathbb{N}$, namely
\begin{equation}
\Gamma = \{\,\sigma \mid \text{$\sigma \subset \mathbb{N}$ and $\#(\sigma) < \infty$} \,\},
\end{equation}
where $\#(\sigma)$ means the cardinality of $\sigma$ as a set. By convention,
$\mathbb{R}$ denotes the real numbers, while $\mathbb{R}_+$ means the nonnegative real numbers.
Unless otherwise specified, letters like $j$, $k$ and $n$ always mean nonnegative integers.

\section{Preliminaries}\label{sec-2}

In what follows, we always assume that $(\Omega, \mathcal{F}, P)$ is a given probability space.
As usual, we denote by $\mathcal{L}^{2}(\Omega, \mathcal{F}, P)$ the Hilbert space of all
square integrable complex-valued measurable functions on $(\Omega, \mathcal{F}, P)$,
and use $\langle\cdot,\cdot\rangle$ and $\|\cdot\|$ to mean the usual inner product and norm of $\mathcal{L}^{2}(\Omega, \mathcal{F}, P)$, respectively.
By convention, $\langle\cdot,\cdot\rangle$ is conjugate-linear in its first argument and linear in its second argument.

\subsection{Discrete-time normal martingale}

A sequence $M=(M_n)_{n\geq 0}$ of real-valued random variables on $(\Omega, \mathcal{F}, P)$ is called a discrete-time normal martingale
if $\{M_n \mid n\geq 0\} \subset \mathcal{L}^{2}(\Omega, \mathcal{F}, P)$ and
\begin{enumerate}[(i)]
  \item $M=(M_n)_{n\geq 0}$ is a centred martingale with respect to the filtration $(\mathcal{F}_n)_{n\geq 0}$,
        where $\mathcal{F}_n = \sigma(M_k; 0\leq k \leq n)$;
  \item $\mathbb{E}[M_0^2 | \mathcal{F}_{-1}] = 1$ and $\mathbb{E}[M_n^2 | \mathcal{F}_{n-1}] = M_{n-1}^2 +1$
for $n\geq 1$,
\end{enumerate}
where $\mathcal{F}_{-1}=\{\emptyset, \Omega\}$ and $\mathbb{E}[\cdot | \mathcal{F}_{n}]$ means the conditional expectation given $\mathcal{F}_{n}$.

Let $\psi=(\psi_n)_{n\geq 0}$ be a sequence of independent Bernoulli random variables on $(\Omega, \mathcal{F}, P)$ satisfying that $\mathcal{F}=(\psi_n; n \geq 0)$ and
\begin{equation}\label{eq-Bernoulli-sequence}
    P\left\{\psi_n = \sqrt{(1-\theta_n)/\theta_n}\right\}=\theta_n,\quad
    P\left\{\psi_n = -\sqrt{\theta_n/(1-\theta_n)}\right\}=1-\theta_n,\quad n\geq 0,
\end{equation}
where $(\theta_n)_{n\geq 0}$ is a given sequence of positive numbers with the property that $0 < \theta_n < 1$ for all $n\geq 0$.
Then, by putting $M_n=\sum_{k=0}^n \psi_k$, $n\geq 0$, one immediately gets a discrete-time normal martingale $M=(M_n)_{n\geq 0}$,
which we call the discrete-time normal martingale associated with the sequence $\psi=(\psi_n)_{n\geq 0}$.

Let $M=(M_n)_{n\geq0}$ be a discrete-time normal martingale on $(\Omega, \mathcal{F}, P)$.
Then, from $M$, one can construct another sequence $Z=(Z_n)_{n\geq 0}$ of random variables as follows:
\begin{equation}\label{eq-2-1}
Z_0=M_0,\quad Z_n = M_n-M_{n-1},\quad  n\geq 1.
\end{equation}
It can be verified that $Z$ admits the following properties:
\begin{equation}\label{eq-2-2}
    \mathbb{E}[Z_n | \mathcal{F}_{n-1}] =0\quad \text{and}\quad  \mathbb{E}[Z_n^2 | \mathcal{F}_{n-1}] =1,\quad  n\geq 0,
\end{equation}
which means that $Z$ can be viewed as a discrete-time (dependent) noise. In what follows,
we call $Z$ the discrete-time normal noise associated with $M$.

Based on the discrete-time normal noise associated with $M$, one can further construct
a countable system $\{Z_{\sigma}\mid \sigma \in \Gamma\}$ of random variables on $(\Omega, \mathcal{F}, P)$ in the following manner
\begin{equation}\label{eq-2-1}
 Z_{\emptyset}\equiv1\quad \text{and}\quad   Z_{\sigma} = \prod_{j\in \sigma}Z_j,\quad \text{$\sigma \in \Gamma$, $\sigma \neq \emptyset$}.
\end{equation}
Let $\mathcal{F}_{\infty}=\sigma(M_n; n\geq 0)$ be the $\sigma$-field over $\Omega$ generated by $M=(M_n)_{n\geq 0}$
and $\mathcal{L}^2(\Omega, \mathcal{F}_{\infty}, P)$ the space of square integrable complex-valued measurable functions on $(\Omega, \mathcal{F}_{\infty}, P)$,
which is a closed subspace of $\mathcal{L}^2(\Omega, \mathcal{F}, P)$.
If the system $\{Z_{\sigma}\mid \sigma \in \Gamma\}$ is a total subset of $\mathcal{L}^2(\Omega, \mathcal{F}_{\infty}, P)$,
then $M$ is said to have the chaotic representation property.
In that case, the system $\{Z_{\sigma}\mid \sigma \in \Gamma\}$ is actually an orthonormal basis (ONB) for $\mathcal{L}^2(\Omega, \mathcal{F}_{\infty}, P)$.

It is known \cite{privault} that the sequence $\psi=(\psi_n)_{n\geq 0}$ shown above has the chaotic representation property.
Thus, the discrete-time normal martingale associated with $\psi=(\psi_n)_{n\geq 0}$ also has the chaotic representation property.
There are sufficient conditions for a discrete-time normal martingale to have the chaotic representation property
(see, e.g., \cite{emery} and references therein).

\subsection{Generalized functionals of discrete-time normal martingale}\label{subsec-2-2}

From now on, we always assume that $M=(M_n)_{n\geq 0}$ is a fixed discrete-time normal martingale
on $(\Omega, \mathcal{F}, P)$ that has the chaotic representation property and
$Z=(Z_n)_{n\geq 0}$ is the discrete-time normal noise associated with $M$.
For brevity, we set
\begin{equation}\label{eq-SquareIntegrable-Space}
  \mathcal{L}^2(M) \equiv \mathcal{L}^2(\Omega, \mathcal{F}_{\infty}, P)
\end{equation}
and call its elements square integrable functionals of $M$, where $\mathcal{F}_{\infty}=\sigma(M_n; n\geq 0)$ as indicated above.
Note that $\mathcal{L}^2(M)$ shares the same inner product $\langle\cdot,\cdot\rangle$ with $\mathcal{L}^2(\Omega, \mathcal{F}, P)$,
and moreover it has a countable ONB $\{Z_{\sigma}\mid \sigma \in \Gamma\}$,
which is known as the canonical ONB for $\mathcal{L}^2(M)$.

Let $\sigma\mapsto\lambda_{\sigma}$ be the positive integer-valued function on $\Gamma$ defined by
\begin{equation}\label{eq-2-2}
\lambda_{\sigma}=
\left\{
  \begin{array}{ll}
    \prod_{k\in\sigma}(k+1), & \hbox{$\sigma\neq \emptyset$, $\sigma\in\Gamma$;}\\
    1, & \hbox{$\sigma=\emptyset$, $\sigma\in\Gamma$.}
  \end{array}
\right.
\end{equation}
It can be verified that, for any real number $r>1$, the positive term series $\sum_{\sigma\in\Gamma}\lambda^{-r}_{\sigma}$ converges
and moreover its sum satisfies that
\begin{equation}\label{eq-2-3}
\sum_{\sigma\in\Gamma}\lambda^{-r}_{\sigma}\leq \exp\bigg[\sum_{k=1}^{\infty}k^{-r}\bigg]<\infty.
\end{equation}
Denote by $|Z_{\sigma}\rangle\langle Z_{\sigma}|$ the Dirac operator associated with the basis vector $Z_{\sigma}$ for $\sigma\in \Gamma$.
Then one gets a positive self-adjoint operator $A=\sum_{\sigma\in \Gamma}\lambda_{\sigma}|Z_{\sigma}\rangle\langle Z_{\sigma}|$ densely-defined in $\mathcal{L}^2(M)$,
which can be used to construct a Gel'fand triple as follows.

For a nonnegative integer $p\geq 0$, we write $\mathcal{S}_p(M)$ for the domain of the operator $A^p$, namely
\begin{equation}\label{eq-2-4}
  \mathcal{S}_p(M)
  = \mathrm{Dom}\, A^p= \Big\{\, \xi \in \mathcal{L}^2(M) \Bigm| \sum_{\sigma\in \Gamma}\lambda_{\sigma}^{2p}|\langle Z_{\sigma}, \xi\rangle|^{2}< \infty\,\Big\},
\end{equation}
and define a two-variable function $\langle \cdot,\cdot\rangle_p$ on $\mathcal{S}_p(M)$ as
\begin{equation}\label{eq-2-5}
  \langle \xi,\eta\rangle_p
  = \sum_{\sigma\in \Gamma}\lambda_{\sigma}^{2p}\overline{\langle Z_{\sigma},\xi\rangle} \langle Z_{\sigma}, \eta\rangle,\quad
  \xi,\, \eta \in \mathcal{S}_p(M).
\end{equation}
It is not hard to check that, with $\langle \cdot,\cdot\rangle_p$ as its inner product, $\mathcal{S}_p(M)$ becomes a complex Hilbert space.
We write $\|\cdot\|_{p}= \sqrt{\langle \cdot,\cdot\rangle_p}$, which obviously has a representation of the form
\begin{equation}\label{eq-2-6}
  \|\xi\|_{p}^2=\sum_{\sigma\in \Gamma}\lambda_{\sigma}^{2p}|\langle Z_{\sigma}, \xi\rangle|^{2},\quad \xi \in \mathcal{S}_p(M).
\end{equation}
It is can be shown that $\{Z_{\sigma}\mid \sigma\in\Gamma\} \subset \mathcal{S}_p(M)$ and moreover
$\{\lambda^{-p}_{\sigma}Z_{\sigma}\mid \sigma\in\Gamma\}$ forms a countable ONB for $\mathcal{S}_p(M)$.

Due to the fact that $\lambda_{\sigma}\geq 1$ for all $\sigma\in \Gamma$, one has $\mathcal{S}_q(M)\subset \mathcal{S}_p(M)$ and $\|\cdot\|_p \leq \|\cdot\|_q$
whenever $0\leq p \leq q$. Put
\begin{equation}\label{eq-2-8}
  \mathcal{S}(M)=\bigcap_{p\geq 0}\mathcal{S}_{p}(M)
\end{equation}
and endow it with the topology generated by the Hilbertian norm sequence $(\|\cdot \|_{p})_{p\geq 0}$.
Then $\mathcal{S}(M)$ forms a countably Hilbertian nuclear space \cite{becnel, gelfand,wang-chen-1}.
Note that, for each $ p\geq 0$, $\mathcal{S}_p(M)$ is just the completion of $\mathcal{S}(M)$ with respect to $\|\cdot\|_{p}$.

For a nonnegative integer $p\geq 0$, we denote by $\mathcal{S}_p^*(M)$ the dual of $\mathcal{S}_p(M)$ and by $\|\cdot\|_{-p}$
the norm of $\mathcal{S}_p^*(M)$. Then $\mathcal{S}_p^*(M)\subset \mathcal{S}_q^*(M)$ and $\|\cdot\|_{-p} \geq \|\cdot\|_{-q}$ whenever $0\leq p \leq q$.
Let $\mathcal{S}^*(M)$ be the dual of $\mathcal{S}(M)$ and endow it with the strong topology.
Then, as an immediate consequence of the general theory of countably Hilbertian nuclear spaces (see, e.g. \cite{becnel} or \cite{gelfand}),
one has
\begin{equation}\label{eq-2-9}
  \mathcal{S}^*(M)=\bigcup_{p\geq 0}\mathcal{S}_p^*(M),
\end{equation}
and moreover the inductive limit topology over $\mathcal{S}^*(M)$ given by space sequence $\mathcal{S}_p^*(M)$, $p\geq 0$
coincides with the strong topology.

We mention that, by identifying $\mathcal{L}^2(M)$ with its dual, one comes to a Gel'fand triple of the following form
\begin{equation}\label{eq-2-10}
\mathcal{S}(M)\subset \mathcal{L}^2(M)\subset \mathcal{S}^*(M),
\end{equation}
which is referred to as the Gel'fand triple based on functionals of $M$.

\begin{remark}\label{rem-2-2}
Elements of $\mathcal{S}^*(M)$ are known as generalized functionals of $M$, while elements of $\mathcal{S}(M)$ are referred to as testing functionals of $M$.
\end{remark}

Accordingly, $\mathcal{S}^*(M)$ and $\mathcal{S}(M)$ are the generalized functional space and the testing functional space of $M$,
respectively. It turns out \cite{wang-chen-1} that $\mathcal{S}^*(M)$ can accommodate
many quantities of theoretical interest that can not be covered by $\mathcal{L}^2(M)$.
In the following, we denote by $\langle\!\langle \cdot,\cdot\rangle\!\rangle$
the canonical bilinear form on $\mathcal{S}^*(M)\times \mathcal{S}(M)$ given by
\begin{equation}\label{eq-2-12}
  \langle\!\langle \Phi,\xi\rangle\!\rangle = \Phi(\xi),\quad \Phi\in \mathcal{S}^*(M),\, \xi\in \mathcal{S}(M).
\end{equation}
Note that $\langle\!\langle \cdot,\cdot\rangle\!\rangle$ is different from the inner product
$\langle\cdot,\cdot\rangle$ in $\mathcal{L}^2(M)$.
For $\Phi \in \mathcal{S}^*(M)$, its Fock transform is the function $\widehat{\Phi}$ on $\Gamma$ given by
\begin{equation}\label{eq-2-13}
  \widehat{\Phi}(\sigma) = \langle\!\langle \Phi, Z_{\sigma}\rangle\!\rangle,\quad \sigma \in \Gamma.
\end{equation}
It is known \cite{wang-chen-1} that generalized functionals of $M$ are completely determined by their Fock transforms, namely
for $\Phi$, $\Psi \in \mathcal{S}^*(M)$, $\Phi=\Psi$ if and only if $\widehat{\Phi}=\widehat{\Psi}$.
The following lemma comes from \cite{wang-chen-1} (see Theorem 14 and Theorem 15 therein),
which characterizes generalized functionals of $M$ through their Fock transforms.

\begin{lemma}\label{lem-2-4}
Let $F$ be a function on $\Gamma$. Then $F$ is the Fock transform of an element $\Phi$ of $\mathcal{S}^*(M)$ if and only if it satisfies
\begin{equation}\label{eq-2-14}
  |F(\sigma)| \leq C\lambda_{\sigma}^p,\quad \sigma \in \Gamma
\end{equation}
for some constants $C\geq 0$ and $p\geq 0$.
In that case, for $q> p+\frac{1}{2}$, one has
\begin{equation}\label{eq-2-15}
  \|\Phi\|_{-q} \leq C\bigg[\sum_{\sigma \in \Gamma}\lambda_{\sigma}^{-2(q-p)}\bigg]^{\frac{1}{2}}
\end{equation}
and in particular $\Phi \in \mathcal{S}_q^*(M)$.
\end{lemma}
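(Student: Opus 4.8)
The plan is to prove the two implications separately, exploiting two structural facts recorded in the preliminaries: the inductive-limit decomposition $\mathcal{S}^*(M)=\bigcup_{p\geq 0}\mathcal{S}_p^*(M)$ from (\ref{eq-2-9}), and the fact that $\{\lambda^{-p}_{\sigma}Z_{\sigma}\mid\sigma\in\Gamma\}$ is an ONB for $\mathcal{S}_p(M)$, so that the norms admit the diagonal expressions (\ref{eq-2-6}). Throughout I would use that the $Z_\sigma$ are real-valued and orthonormal, so $\langle Z_\sigma, Z_\tau\rangle=\delta_{\sigma\tau}$ and $\langle Z_\sigma,\cdot\rangle$ is linear in its second slot, matching the bilinearity of the canonical pairing $\langle\!\langle\cdot,\cdot\rangle\!\rangle$.

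For necessity, suppose $F=\widehat{\Phi}$ with $\Phi\in\mathcal{S}^*(M)$. By (\ref{eq-2-9}) there is some $p\geq 0$ with $\Phi\in\mathcal{S}_p^*(M)$, so $\|\Phi\|_{-p}<\infty$. The key observation is that each basis vector $\lambda_{\sigma}^{-p}Z_{\sigma}$ is a unit vector of $\mathcal{S}_p(M)$; indeed (\ref{eq-2-6}) gives $\|\lambda_{\sigma}^{-p}Z_{\sigma}\|_p^2=\lambda_\sigma^{-2p}\sum_{\tau}\lambda_\tau^{2p}\delta_{\tau\sigma}=1$. Applying the defining inequality of the dual norm to this unit vector yields $\lambda_{\sigma}^{-p}\,|\widehat{\Phi}(\sigma)|=|\langle\!\langle\Phi,\lambda_\sigma^{-p}Z_\sigma\rangle\!\rangle|\leq\|\Phi\|_{-p}$, hence $|F(\sigma)|\leq\|\Phi\|_{-p}\,\lambda_\sigma^{p}$. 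Taking $C=\|\Phi\|_{-p}$ gives (\ref{eq-2-14}).

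For sufficiency, assume $|F(\sigma)|\leq C\lambda_\sigma^{p}$ and fix $q>p+\tfrac{1}{2}$. I would define a functional on $\mathcal{S}_q(M)$ by $\Phi(\xi)=\sum_{\sigma\in\Gamma}F(\sigma)\langle Z_\sigma,\xi\rangle$ and check continuity directly. Splitting the weight as $\lambda_\sigma^{p}=\lambda_\sigma^{p-q}\cdot\lambda_\sigma^{q}$ and applying the Cauchy--Schwarz inequality gives
\begin{equation*}
  \sum_{\sigma}|F(\sigma)|\,|\langle Z_\sigma,\xi\rangle|
   \leq C\bigg[\sum_{\sigma}\lambda_\sigma^{-2(q-p)}\bigg]^{\frac12}
        \bigg[\sum_{\sigma}\lambda_\sigma^{2q}|\langle Z_\sigma,\xi\rangle|^2\bigg]^{\frac12},
\end{equation*}
where the second bracket equals $\|\xi\|_q$ by (\ref{eq-2-6}), and the first bracket is finite precisely because $2(q-p)>1$, by the convergence bound (\ref{eq-2-3}). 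This shows the series defining $\Phi(\xi)$ converges absolutely, that $\Phi$ is a continuous linear functional on $\mathcal{S}_q(M)$ with the norm estimate (\ref{eq-2-15}), hence $\Phi\in\mathcal{S}_q^*(M)\subset\mathcal{S}^*(M)$. Finally, evaluating on basis vectors and using orthonormality gives $\widehat{\Phi}(\tau)=\Phi(Z_\tau)=\sum_\sigma F(\sigma)\delta_{\sigma\tau}=F(\tau)$, so $F$ is indeed the Fock transform of $\Phi$.

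I expect the computations themselves to be routine; the one point requiring care is the weight-splitting step, where the exponent threshold $q>p+\tfrac12$ must be arranged exactly so that $\sum_\sigma\lambda_\sigma^{-2(q-p)}$ falls under the convergence criterion (\ref{eq-2-3}) (valid for exponents strictly greater than $1$). A secondary subtlety worth stating explicitly is the consistency of conventions: since the pairing $\langle\!\langle\cdot,\cdot\rangle\!\rangle$ is bilinear while $\langle\cdot,\cdot\rangle$ is conjugate-linear in its first argument, one must verify that $\langle Z_\sigma,\cdot\rangle$ is genuinely linear (which holds because $Z_\sigma$ is real-valued), so that the constructed $\Phi$ is a legitimate element of the dual rather than a conjugate-linear object.
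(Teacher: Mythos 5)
Your proof is correct. Note that the paper itself contains no proof of this lemma --- it is imported verbatim from \cite{wang-chen-1} (Theorems 14 and 15 there) --- so there is no in-paper argument to compare against; your argument is the standard one for this characterization: necessity by evaluating the dual norm on the unit vectors $\lambda_{\sigma}^{-p}Z_{\sigma}$ of $\mathcal{S}_p(M)$, and sufficiency by the weight-splitting Cauchy--Schwarz estimate, with the threshold $q>p+\frac{1}{2}$ feeding exponent $2(q-p)>1$ into the convergence bound \eqref{eq-2-3}, which reproduces the norm estimate \eqref{eq-2-15} exactly. Your closing remarks on the bilinear-versus-sesquilinear bookkeeping are also the right points to flag.
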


A sequence $(\Phi_n)_{n\geq 1}$ in $\mathcal{S}^*(M)$ is said to converge strongly to an element $\Phi$ of $\mathcal{S}^*(M)$
if converges to $\Phi$ in the strong topology of $\mathcal{S}^*(M)$.
Theorem 10 of \cite{wang-chen-2} provides a criterion for checking whether or not a sequence $(\Phi_n)_{n\geq 0}$ in $\mathcal{S}^*(M)$ converges strongly.

\section{Main work}\label{sec-3}

In this section, we show our main work in the present paper. We continue to use the notions, assumptions and notation
made in previous sections.

\subsection{Definition of generalized weighted number operators}\label{subsec-3-1}

In the first subsection, we first introduce a class of continuous linear operator on $\mathcal{S}^*(M)$, which we call 2D-generalized weighted number (2D-GWN) operators.
And then we compare these operators with the 2D-weighted number (2D-WN) operators introduced in \cite{wang-tang}, which are densely-defined in $\mathcal{L}^2(M)$.
Finally, we prove a regularity property of 2D-GWN operators.

\begin{definition}\label{def-3-1}
A nonnegative function $w$ on $\mathbb{N}\times \mathbb{N}$ is called a 2D-weight if it satisfies that
\begin{equation}\label{eq-weihgt-function}
\sup_{k\geq 0}\sum_{j=0}^{\infty} w(j,k)<\infty.
\end{equation}
\end{definition}

For a 2D-weight $w$, a function $\vartheta_w$ on $\Gamma$, called the spectral function associated with $w$, is defined as
\begin{equation}\label{eq-spectral-function}
\vartheta_w(\sigma)
= \sum_{j=0}^{\infty}\mathbf{1}_{\sigma}(j)w(j,j) + \sum_{j,k=0}^{\infty} \big(1-\mathbf{1}_{\sigma}(j)\big)\mathbf{1}_{\sigma}(k)w(j,k),\quad\sigma \in \Gamma,
\end{equation}
where $\mathbf{1}_{\sigma}(\cdot)$ means the indicator of $\sigma$ as a subset of $\mathbb{N}$.
The next proposition shows that $\vartheta_w$ is dominated by the function $\sigma\mapsto \#(\sigma)$ on $\Gamma$.

\begin{proposition}\label{prop-3-1}
Let $w$ be a 2D-weight and $\vartheta_w$ the spectral function associated with $w$. Then, it holds true that
\begin{equation}\label{eq-3-3}
0\leq \vartheta_w(\sigma)\leq 2\alpha_w \#(\sigma),\quad\sigma \in \Gamma,
\end{equation}
where $\alpha_w=\sup_{k\geq 0}\sum_{j=0}^{\infty} w(j,k)$ and $\#(\sigma)$ means the cardinality of $\sigma$.
\end{proposition}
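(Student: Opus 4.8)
The plan is to dispose of the lower bound immediately and then bound the two summands in the definition of $\vartheta_w(\sigma)$ separately, each by $\alpha_w\#(\sigma)$. The lower bound $\vartheta_w(\sigma)\geq 0$ is clear: $w$ is nonnegative by hypothesis, and every other factor occurring in (\ref{eq-spectral-function}), namely $\mathbf{1}_{\sigma}(\cdot)$ and $1-\mathbf{1}_{\sigma}(\cdot)$, takes values in $\{0,1\}$ and is therefore nonnegative; hence each of the two series has nonnegative terms.

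For the upper bound I would write $\vartheta_w(\sigma)=S_1+S_2$, where $S_1=\sum_{j=0}^{\infty}\mathbf{1}_{\sigma}(j)w(j,j)$ and $S_2=\sum_{j,k=0}^{\infty}\big(1-\mathbf{1}_{\sigma}(j)\big)\mathbf{1}_{\sigma}(k)w(j,k)$, and bound them in turn. For $S_1$, the factor $\mathbf{1}_{\sigma}(j)$ restricts the sum to $j\in\sigma$, so $S_1=\sum_{j\in\sigma}w(j,j)$. The key observation is that, specializing the defining supremum $\alpha_w=\sup_{k\geq 0}\sum_{i=0}^{\infty}w(i,k)$ to the index $k=j$, one gets $w(j,j)\leq\sum_{i=0}^{\infty}w(i,j)\leq\alpha_w$ for every $j$. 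Summing this over the $\#(\sigma)$ elements of $\sigma$ yields $S_1\leq\alpha_w\#(\sigma)$.

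For $S_2$, the factor $\mathbf{1}_{\sigma}(k)$ restricts to $k\in\sigma$, so that $S_2=\sum_{k\in\sigma}\sum_{j=0}^{\infty}\big(1-\mathbf{1}_{\sigma}(j)\big)w(j,k)$. Dropping the factor $1-\mathbf{1}_{\sigma}(j)\leq 1$ (legitimate since all terms are nonnegative) and then applying the defining bound for each fixed $k$ gives $\sum_{j=0}^{\infty}\big(1-\mathbf{1}_{\sigma}(j)\big)w(j,k)\leq\sum_{j=0}^{\infty}w(j,k)\leq\alpha_w$, whence $S_2\leq\sum_{k\in\sigma}\alpha_w=\alpha_w\#(\sigma)$. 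Adding the two estimates produces $\vartheta_w(\sigma)=S_1+S_2\leq 2\alpha_w\#(\sigma)$, which is (\ref{eq-3-3}).

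There is no substantive obstacle here; the only point requiring care is bookkeeping of the two indices. One must match the roles of $j$ (the summation index, occupying the \emph{first} slot of $w$) and $k$ (occupying the \emph{second} slot, over which the supremum in Definition~\ref{def-3-1} is taken) with the direction of that supremum, so that the bound $\sum_{j}w(j,k)\leq\alpha_w$ is applied with $k$ held fixed. The finiteness of $\alpha_w$ guaranteed by Definition~\ref{def-3-1} simultaneously ensures that both inner series converge, so every rearrangement used above is justified.
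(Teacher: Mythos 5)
Your proof is correct and follows essentially the same route as the paper's: both split $\vartheta_w(\sigma)$ into the diagonal and off-diagonal sums, bound the diagonal terms via $w(j,j)\leq\sum_{i=0}^{\infty}w(i,j)\leq\alpha_w$, and bound the off-diagonal sum by discarding the factor $1-\mathbf{1}_{\sigma}(j)$ and applying the defining supremum with $k$ fixed, yielding $\alpha_w\#(\sigma)$ for each piece. Your remarks on index bookkeeping and convergence merely make explicit what the paper leaves implicit; there is no substantive difference.
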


\begin{proof}
Clearly, $\vartheta_w(\sigma)\geq 0$ for all $\sigma \in \Gamma$. On the other hand, for all $\sigma \in \Gamma$, careful
estimates give
\begin{equation*}
\begin{split}
 \vartheta_w(\sigma)
   &= \sum_{j=0}^{\infty}\mathbf{1}_{\sigma}(j)w(j,j) + \sum_{j,k=0}^{\infty} \big(1-\mathbf{1}_{\sigma}(j)\big)\mathbf{1}_{\sigma}(k)w(j,k)\\
   &\leq \alpha_w\sum_{j=0}^{\infty}\mathbf{1}_{\sigma}(j) + \sum_{k=0}^{\infty}\mathbf{1}_{\sigma}(k)\sum_{j=0}^{\infty}w(j,k)\\
   &\leq 2\alpha_w\#(\sigma).
\end{split}
\end{equation*}
This completes the proof.
\end{proof}

\begin{theorem}\label{basic-theorem-1}
Let $w$ be a 2D-weight. Then, there exists a unique continuous linear operator $\mathfrak{K}_w\colon \mathcal{S}^*(M) \rightarrow \mathcal{S}^*(M)$ such that
\begin{equation}\label{basic-formula-1}
  \widehat{\mathfrak{K}_w\Phi}(\sigma) = \vartheta_w(\sigma)\widehat{\Phi}(\sigma),\quad \sigma\in \Gamma,\; \Phi\in \mathcal{S}^*(M),
\end{equation}
where $\vartheta_w$ is the spectral function associated with $w$.
\end{theorem}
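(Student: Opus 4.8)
The plan is to define $\mathfrak{K}_w$ directly on the Fock-transform side and then transport everything back through the characterization theorem (Lemma~\ref{lem-2-4}). Uniqueness is essentially free: since a generalized functional is completely determined by its Fock transform, any two continuous linear operators satisfying (\ref{basic-formula-1}) must agree on every $\Phi \in \mathcal{S}^*(M)$, so at most one such operator can exist. The real content lies in existence together with linearity and continuity.

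For existence I would fix $\Phi \in \mathcal{S}^*(M)$ and consider the function $F_\Phi$ on $\Gamma$ given by $F_\Phi(\sigma) = \vartheta_w(\sigma)\widehat{\Phi}(\sigma)$. By Lemma~\ref{lem-2-4} there are constants $C \geq 0$ and $p\geq 0$ with $|\widehat{\Phi}(\sigma)| \leq C\lambda_\sigma^p$ for all $\sigma$. Combining this with Proposition~\ref{prop-3-1}, which bounds $\vartheta_w(\sigma)$ by $2\alpha_w\#(\sigma)$, reduces the problem to controlling $\#(\sigma)$ by a power of $\lambda_\sigma$. Here I would insert the elementary inequality $\#(\sigma)\leq\lambda_\sigma$, valid for all $\sigma\in\Gamma$ (split according to whether $0\in\sigma$ and use $2^{m}\geq m+1$ on the nontrivial factors $(k+1)\geq 2$). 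This yields $|F_\Phi(\sigma)| \leq 2\alpha_w C\,\lambda_\sigma^{p+1}$, so $F_\Phi$ again satisfies the growth condition (\ref{eq-2-14}), now with exponent $p+1$. The characterization theorem then produces a unique $\Psi\in\mathcal{S}^*(M)$ with $\widehat{\Psi}=F_\Phi$, and I define $\mathfrak{K}_w\Phi:=\Psi$, which is exactly (\ref{basic-formula-1}). Linearity of $\mathfrak{K}_w$ then follows immediately from the linearity of the Fock transform together with the injectivity just used for uniqueness.

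For continuity I would argue level by level. For $\Phi\in\mathcal{S}_p^*(M)$ the dual norm admits the representation $\|\Phi\|_{-p}^2=\sum_{\sigma\in\Gamma}\lambda_\sigma^{-2p}|\widehat{\Phi}(\sigma)|^2$, coming from the ONB $\{\lambda_\sigma^{-p}Z_\sigma\}$ of $\mathcal{S}_p(M)$. Applying the cruder bound $\vartheta_w(\sigma)\leq 2\alpha_w\lambda_\sigma$ inside the sum gives
\begin{equation*}
\|\mathfrak{K}_w\Phi\|_{-(p+1)}^2
=\sum_{\sigma\in\Gamma}\lambda_\sigma^{-2(p+1)}\vartheta_w(\sigma)^2|\widehat{\Phi}(\sigma)|^2
\leq 4\alpha_w^2\sum_{\sigma\in\Gamma}\lambda_\sigma^{-2p}|\widehat{\Phi}(\sigma)|^2
=4\alpha_w^2\|\Phi\|_{-p}^2,
\end{equation*}
so that $\mathfrak{K}_w$ maps $\mathcal{S}_p^*(M)$ boundedly into $\mathcal{S}_{p+1}^*(M)$ with $\|\mathfrak{K}_w\Phi\|_{-(p+1)}\leq 2\alpha_w\|\Phi\|_{-p}$. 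Since $p\geq0$ is arbitrary and the strong topology on $\mathcal{S}^*(M)$ coincides with the inductive limit topology of the spaces $\mathcal{S}_p^*(M)$ (as recalled around (\ref{eq-2-9})), continuity of the restriction to each step forces continuity of $\mathfrak{K}_w$ on all of $\mathcal{S}^*(M)$.

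I expect the main obstacle to be the passage from the pointwise spectral bound $\vartheta_w(\sigma)\leq 2\alpha_w\#(\sigma)$ to a genuine power bound in $\lambda_\sigma$: everything downstream, both the growth estimate for existence and the uniform norm estimate for continuity, hinges on the inequality $\#(\sigma)\leq\lambda_\sigma$, and the factor $(0+1)=1$ contributed when $0\in\sigma$ means this must be verified with a little care rather than dismissed as obvious. Once that inequality is in hand, the remaining work is routine bookkeeping with the Fock transform and the step norms.
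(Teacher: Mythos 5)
Your proposal is correct and follows essentially the same route as the paper's own proof: uniqueness via injectivity of the Fock transform, existence by applying the characterization theorem (Lemma~\ref{lem-2-4}) to the function $\sigma\mapsto\vartheta_w(\sigma)\widehat{\Phi}(\sigma)$ with the bound $\vartheta_w(\sigma)\leq 2\alpha_w\#(\sigma)\leq 2\alpha_w\lambda_\sigma$, and continuity by the level-by-level estimate $\|\mathfrak{K}_w\Phi\|_{-(p+1)}\leq 2\alpha_w\|\Phi\|_{-p}$ combined with the coincidence of the strong and inductive limit topologies on $\mathcal{S}^*(M)$. The only difference is cosmetic: you supply a short verification of the inequality $\#(\sigma)\leq\lambda_\sigma$, which the paper uses without comment.
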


\begin{proof}
Clearly, the operator $\mathfrak{K}_w$ is unique if it exists. Next we show its existence.
Let $\Phi \in \mathcal{S}^*(M)$. By Lemma~\ref{lem-2-4}, there exist constants
$C\geq 0$ and $p\geq 0$ such that
\begin{equation*}
|\widehat{\Phi}(\sigma)|\leq C\lambda_{\sigma}^p,\quad \sigma\in \Gamma,
\end{equation*}
which, together with inequalities $\vartheta_w(\sigma)\leq 2\alpha_w \#(\sigma)$ and $\#(\sigma)\leq \lambda_{\sigma}$,
implies that the function $\sigma\mapsto \vartheta_w(\sigma)\widehat{\Phi}(\sigma)$ satisfies the following relation
\begin{equation*}
  |\vartheta_w(\sigma)\widehat{\Phi}(\sigma)|
\leq 2\alpha_wC\lambda_{\sigma}^{p+1},\quad \sigma\in \Gamma.
\end{equation*}
Thus, again by Lemma~\ref{lem-2-4}, there exists a unique $\Psi_{\Phi}\in \mathcal{S}^*(M)$ such that
\begin{equation*}
\widehat{\Psi_{\Phi}}(\sigma)= \vartheta_w(\sigma)\widehat{\Phi}(\sigma),\quad \sigma \in \Gamma.
\end{equation*}
Now consider the mapping $\mathfrak{K}_w\colon \mathcal{S}^*(M)\rightarrow \mathcal{S}^*(M)$ given by\
$\mathfrak{K}_w \Phi = \Psi_{\Phi}$, $\Phi\in \mathcal{S}^*(M)$.
It is not hard to verify that $\mathfrak{K}_w$ is linear and satisfies (\ref{basic-formula-1}).
Next, we show that $\mathfrak{K}_w$ is continuous.

Let $p\geq 0$ and denote by $\mathfrak{j}_p\colon \mathcal{S}_p^*(M)\rightarrow \mathcal{S}^*(M)$ the inclusion
mapping given by $\mathfrak{j}_p(\Phi)=\Phi$, $\Phi\in \mathcal{S}_p^*(M)$.
Then, for each $\Phi\in \mathcal{S}_p^*(M)$, we have
\begin{equation*}
\begin{split}
\sum_{\sigma\in \Gamma} \lambda_{\sigma}^{-2(p+1)}|\widehat{\mathfrak{K}_w\circ\mathfrak{j}_p(\Phi)}(\sigma)|^2
& = \sum_{\sigma\in \Gamma} \lambda_{\sigma}^{-2(p+1)}|\widehat{\mathfrak{K}_w\Phi}(\sigma)|^2\\
& = \sum_{\sigma\in \Gamma} \lambda_{\sigma}^{-2(p+1)} (\vartheta_w(\sigma))^2|\widehat{\Phi}(\sigma)|^2\\
& \leq 4\alpha_w^2\sum_{\sigma\in \Gamma} \lambda_{\sigma}^{-2p} |\widehat{\Phi}(\sigma)|^2,
\end{split}
\end{equation*}
which implies that $\mathfrak{K}_w\circ\mathfrak{j}_p(\Phi)\in \mathcal{S}_{p+1}^*(M)$ and
$\|\mathfrak{K}_w\circ\mathfrak{j}_p(\Phi)\|_{-(p+1)}\leq 2\alpha_w\|\Phi\|_{-p}$.
Thus $\mathfrak{K}_w\circ\mathfrak{j}_p$ is actually a bounded linear operator from $\mathcal{S}_p^*(M)$ to $\mathcal{S}_{p+1}^*(M)$,
which means that $\mathfrak{K}_w\circ\mathfrak{j}_p\colon \mathcal{S}_p^*(M)\rightarrow \mathcal{S}^*(M)$ is continuous.

Since the choice of the above $p\geq 0$ is arbitrary, we
actually arrive at a conclusion that the composition mapping
$\mathfrak{K}_w\circ\mathfrak{j}_p\colon \mathcal{S}_p^*(M)\rightarrow \mathcal{S}^*(M)$ is continuous
for all $p\geq 0$. Therefore, the mapping $\mathfrak{K}_w\colon \mathcal{S}^*(M)\rightarrow \mathcal{S}^*(M)$ is continuous with respect to
the inductive limit topology over its domain $\mathcal{S}^*(M)$. Since the strong topology over $\mathcal{S}^*(M)$
coincides with the inductive limit topology over $\mathcal{S}^*(M)$, we finally come to the conclusion that $\mathfrak{K}_w$ is continuous.
\end{proof}

\begin{definition}\label{def-2D-GWN-operator}
The operator $\mathfrak{K}_w\colon \mathcal{S}^*(M) \rightarrow \mathcal{S}^*(M)$
indicated in Theorem~\ref{basic-theorem-1} is called the 2D-generalized weighted number operator on $\mathcal{S}^*(M)$ associated with $w$
(for short, the 2D-GWN operator associated with $w$, or the 2D-GWN operator on $\mathcal{S}^*(M)$, or the GWN operator).
\end{definition}

Recall that $\mathcal{L}^2(M)$ is the Hilbert space of square integrable functionals of $M$, which is contained in $\mathcal{S}^*(M)$.
According to Appendix (Section~\ref{sec-4}), for a 2D-weight $w$, the 2D-weighted number (2D-WN) operator $S_w$ associated with $w$
is the one densely-defined in $\mathcal{L}^2(M)$, which can be equivalently redefined as
\begin{equation}\label{eq-3-5}
  S_w\xi = \sum_{\sigma\in \Gamma} \vartheta_w(\sigma)\langle Z_{\sigma},\xi\rangle Z_{\sigma},\quad \xi \in \mathrm{Dom}\,S_w
\end{equation}
with the domain $\mathrm{Dom}\,S_w$ given by
\begin{equation}\label{eq-3-6}
  \mathrm{Dom}\,S_w=\Big\{\xi \in \mathcal{L}^2(M) \Bigm| \sum_{\sigma\in \Gamma} (\vartheta_w(\sigma))^2|\langle Z_{\sigma},\xi\rangle|^2<\infty \Big\}.
\end{equation}
The next proposition shows close links between the 2D-GWN operator $\mathfrak{K}_w$ on $\mathcal{S}^*(M)$ and the 2D-WN operator $S_w$ in $\mathcal{L}^2(M)$,
which justifies Definition~\ref{def-2D-GWN-operator}.

\begin{proposition}\label{prop-3-3}
Let $w$ be a $2$D-weight. Then, the 2D-GWN operator $\mathfrak{K}_w$ on $ \mathcal{S}^*(M)$
and the 2D-WN operator $S_w$ in $\mathcal{L}^2(M)$ admit the following relation
\begin{equation}\label{eq-3-7}
  (\mathsf{R}S_w)\xi = (\mathfrak{K}_w\mathsf{R})\xi,\quad \xi \in \mathrm{Dom}\,S_w,
\end{equation}
where $\mathsf{R}$ is the Riesz mapping from $\mathcal{L}^2(M)$ to its dual.
\begin{equation}\label{eq-3-8}
\begin{CD}
\mathrm{Dom}\,S_w            @> \mathsf{R} >>            \mathcal{S}^*(M)\\
@V S_w VV                                           @VV  \mathfrak{K}_w V \\
\mathcal{L}^2(M)             @>> \mathsf{R} >              \mathcal{S}^*(M)
\end{CD}
\end{equation}
\end{proposition}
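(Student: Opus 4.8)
The plan is to verify the operator identity (\ref{eq-3-7}) by comparing Fock transforms, exploiting the fact, recorded just after (\ref{eq-2-13}), that a generalized functional of $M$ is completely determined by its Fock transform. So fix $\xi\in\mathrm{Dom}\,S_w$. First I would check that both sides of (\ref{eq-3-7}) are bona fide elements of $\mathcal{S}^*(M)$: since $\xi\in\mathcal{L}^2(M)\subset\mathcal{S}^*(M)$, the element $\mathfrak{K}_w\mathsf{R}\xi$ is well defined by Theorem~\ref{basic-theorem-1}; and since $S_w\xi\in\mathcal{L}^2(M)$ by (\ref{eq-3-5})--(\ref{eq-3-6}), applying $\mathsf{R}$ places $\mathsf{R}S_w\xi$ in $\mathcal{S}^*(M)$ as well. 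It then suffices to prove that $\widehat{\mathsf{R}S_w\xi}(\sigma)=\widehat{\mathfrak{K}_w\mathsf{R}\xi}(\sigma)$ for every $\sigma\in\Gamma$.

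The key computation is to identify the Fock transform of $\mathsf{R}\eta$ for a general $\eta\in\mathcal{L}^2(M)$. By the definition (\ref{eq-2-13}) of the Fock transform together with the defining property of the Riesz identification of $\mathcal{L}^2(M)$ with its dual, one has
\begin{equation*}
  \widehat{\mathsf{R}\eta}(\sigma)=\langle\!\langle\mathsf{R}\eta,Z_{\sigma}\rangle\!\rangle=\langle\eta,Z_{\sigma}\rangle,\quad\sigma\in\Gamma.
\end{equation*}
Applying this with $\eta=\xi$ and invoking the defining relation (\ref{basic-formula-1}) of $\mathfrak{K}_w$, the right-hand side becomes $\widehat{\mathfrak{K}_w\mathsf{R}\xi}(\sigma)=\vartheta_w(\sigma)\widehat{\mathsf{R}\xi}(\sigma)=\vartheta_w(\sigma)\langle\xi,Z_{\sigma}\rangle$.

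For the left-hand side I would apply the same identity with $\eta=S_w\xi$, so that $\widehat{\mathsf{R}S_w\xi}(\sigma)=\langle S_w\xi,Z_{\sigma}\rangle$. Now I substitute the series representation (\ref{eq-3-5}) of $S_w\xi$; since this series converges in the norm of $\mathcal{L}^2(M)$, continuity of the inner product lets me evaluate it termwise against $Z_{\sigma}$, and orthonormality of $\{Z_{\sigma}\mid\sigma\in\Gamma\}$ collapses the sum to the single term $\overline{\vartheta_w(\sigma)\langle Z_{\sigma},\xi\rangle}$. Here the essential point, and the one deserving care, is that $\vartheta_w(\sigma)$ is a \emph{real} nonnegative number by Proposition~\ref{prop-3-1}, so it commutes with the complex conjugation carried by the Riesz map; consequently $\langle S_w\xi,Z_{\sigma}\rangle=\vartheta_w(\sigma)\overline{\langle Z_{\sigma},\xi\rangle}=\vartheta_w(\sigma)\langle\xi,Z_{\sigma}\rangle$, which matches exactly the right-hand side computed above. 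Since $\sigma\in\Gamma$ is arbitrary, the two Fock transforms coincide, and injectivity of the Fock transform yields (\ref{eq-3-7}). The only genuine subtlety is the bookkeeping of the conjugation inherent in the Riesz identification; the reality of $\vartheta_w$ is precisely what makes those conjugations cancel, and everything else is a direct substitution.
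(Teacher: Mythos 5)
Your proof is correct and follows essentially the same route as the paper's: both identify $\widehat{\mathsf{R}\eta}(\sigma)=\langle\eta,Z_{\sigma}\rangle$, compute $\langle S_w\xi,Z_{\sigma}\rangle=\vartheta_w(\sigma)\langle\xi,Z_{\sigma}\rangle$ from the series representation (\ref{eq-3-5}), invoke the defining relation (\ref{basic-formula-1}) of $\mathfrak{K}_w$, and conclude by injectivity of the Fock transform. The only difference is that you spell out the conjugation bookkeeping and the reality of $\vartheta_w$, which the paper leaves implicit.
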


\begin{proof}
Let $\xi \in \mathrm{Dom}\,S_w$. Then, for all $\sigma\in \Gamma$, by using (\ref{eq-3-5}) and (\ref{basic-formula-1}), we have
\begin{equation*}
  \widehat{\mathsf{R}(S_w\xi)}(\sigma)=\langle S_w\xi, Z_{\sigma}\rangle = \vartheta_w(\sigma)\langle \xi, Z_{\sigma}\rangle
= \vartheta_w(\sigma)\widehat{\mathsf{R}\xi}(\sigma)= \widehat{\mathfrak{K}_w(\mathsf{R}\xi)}(\sigma),
\end{equation*}
which implies that $\mathsf{R}(S_w\xi)=\mathfrak{K}_w(\mathsf{R}\xi)$, namely $(\mathsf{R}S_w)\xi=(\mathfrak{K}_w\mathsf{R})\xi$.
\end{proof}

In general, the 2D-WN operator $S_w$ is not bounded (hence not continuous) and its domain $\mathrm{Dom}\,S_w$ is a proper subspace of $\mathcal{L}^2(M)$
(see Lemma~\ref{lem-4-1} and Remark~\ref{rem-4-1} in Appendix for details).
However, as an $\mathcal{S}^*(M)$-version of $S_w$, the 2D-GWN operator $\mathfrak{K}_w$ is defined on the whole space $\mathcal{S}^*(M)$ and is always continuous,
which might make it convenient to apply $\mathfrak{K}_w$ in many cases.
The next theorem further shows the regularity of $\mathfrak{K}_w$.

\begin{theorem}\label{thr-3-3}
Let $w$ be a 2D-weight. Then, for each $p\geq 0$, one has $\mathfrak{K}_w(\mathcal{S}_p^*(M))\subset \mathcal{S}_{p+1}^*(M)$,
and moreover it holds that
\begin{equation}\label{eq}
  \|\mathfrak{K}_w\Phi\|_{-(p+1)}\leq 2\alpha_w\|\Phi\|_{-p},\quad \Phi\in \mathcal{S}_p^*(M),
\end{equation}
where $\alpha_w=\sup_{k\geq 0}\sum_{j=0}^{\infty} w(j,k)$.
\end{theorem}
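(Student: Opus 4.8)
The plan is to obtain both assertions from a single direct estimate on the $\|\cdot\|_{-(p+1)}$ norm of $\mathfrak{K}_w\Phi$, carried out entirely at the level of Fock-transform coefficients. The starting point is the representation of the dual norm: since $\{\lambda_{\sigma}^{-p}Z_{\sigma}\mid \sigma\in\Gamma\}$ is an ONB for $\mathcal{S}_p(M)$, for every $\Phi\in\mathcal{S}_p^*(M)$ one has $\|\Phi\|_{-p}^2=\sum_{\sigma\in\Gamma}\lambda_{\sigma}^{-2p}|\widehat{\Phi}(\sigma)|^2$, and the analogous identity holds with $p$ replaced by $p+1$. I would record this representation first, so that the whole argument reduces to manipulating weighted $\ell^2$ sums of the coefficients $\widehat{\Phi}(\sigma)$.

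Next I would fix $\Phi\in\mathcal{S}_p^*(M)$ and invoke the defining identity $\widehat{\mathfrak{K}_w\Phi}(\sigma)=\vartheta_w(\sigma)\widehat{\Phi}(\sigma)$ furnished by Theorem~\ref{basic-theorem-1}. Substituting into the norm representation gives
\[
\|\mathfrak{K}_w\Phi\|_{-(p+1)}^2=\sum_{\sigma\in\Gamma}\lambda_{\sigma}^{-2(p+1)}\big(\vartheta_w(\sigma)\big)^2|\widehat{\Phi}(\sigma)|^2.
\]
The core of the estimate is the pointwise control of the spectral function: combining Proposition~\ref{prop-3-1}, which gives $\vartheta_w(\sigma)\leq 2\alpha_w\#(\sigma)$, with the elementary inequality $\#(\sigma)\leq\lambda_{\sigma}$ (valid because distinct nonnegative integers force $\lambda_{\sigma}\geq\#(\sigma)!\geq\#(\sigma)$), one obtains $\big(\vartheta_w(\sigma)\big)^2\leq 4\alpha_w^2\lambda_{\sigma}^2$. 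Inserting this bound, the extra factor $\lambda_{\sigma}^2$ cancels exactly against the difference between the weights $\lambda_{\sigma}^{-2(p+1)}$ and $\lambda_{\sigma}^{-2p}$, so that
\[
\|\mathfrak{K}_w\Phi\|_{-(p+1)}^2\leq 4\alpha_w^2\sum_{\sigma\in\Gamma}\lambda_{\sigma}^{-2p}|\widehat{\Phi}(\sigma)|^2=4\alpha_w^2\|\Phi\|_{-p}^2.
\]
Finiteness of the right-hand side shows $\mathfrak{K}_w\Phi\in\mathcal{S}_{p+1}^*(M)$, whence $\mathfrak{K}_w(\mathcal{S}_p^*(M))\subset\mathcal{S}_{p+1}^*(M)$, and taking square roots yields the claimed bound $\|\mathfrak{K}_w\Phi\|_{-(p+1)}\leq 2\alpha_w\|\Phi\|_{-p}$.

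I do not anticipate a genuine obstacle, since the required chain of inequalities is essentially the one already produced inside the proof of Theorem~\ref{basic-theorem-1} to establish the continuity of $\mathfrak{K}_w\circ\mathfrak{j}_p$; the present theorem amounts to isolating that estimate and reinterpreting it as a quantitative statement relating the individual Hilbert spaces $\mathcal{S}_p^*(M)$ and $\mathcal{S}_{p+1}^*(M)$. The only point requiring a little care is the bookkeeping of the exponent on $\lambda_{\sigma}$: one must verify that the single power of $\lambda_{\sigma}$ gained in passing from level $p$ to level $p+1$ is precisely what absorbs the linear growth of $\vartheta_w$ in $\#(\sigma)$, which is exactly why the index shifts from $p$ to $p+1$ rather than remaining fixed.
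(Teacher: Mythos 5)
Your proposal is correct and follows essentially the same route as the paper's own proof: both apply the Fock-transform identity of Theorem~\ref{basic-theorem-1}, the bound $\vartheta_w(\sigma)\leq 2\alpha_w\#(\sigma)$ of Proposition~\ref{prop-3-1}, and $\#(\sigma)\leq\lambda_{\sigma}$ to obtain $\sum_{\sigma\in\Gamma}\lambda_{\sigma}^{-2(p+1)}|\widehat{\mathfrak{K}_w\Phi}(\sigma)|^2\leq 4\alpha_w^2\sum_{\sigma\in\Gamma}\lambda_{\sigma}^{-2p}|\widehat{\Phi}(\sigma)|^2$. The only cosmetic difference is that you conclude via the explicit ONB representation of the dual norm, while the paper cites Lemma~\ref{lem-2-4} for the same conclusion.
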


\begin{proof}
Let $\Phi\in \mathcal{S}_p^*(M)$ be given. Then, by using Theorem~\ref{basic-theorem-1} and Proposition~\ref{prop-3-1},  we find
\begin{equation*}
 \sum_{\sigma\in \Gamma} \lambda_{\sigma}^{-2(p+1)}|\widehat{\mathfrak{K}_w\Phi}(\sigma)|^2
 = \sum_{\sigma\in \Gamma} \lambda_{\sigma}^{-2(p+1)} (\vartheta_w(\sigma))^2|\widehat{\Phi}(\sigma)|^2
 \leq 4\alpha_w^2\sum_{\sigma\in \Gamma} \lambda_{\sigma}^{-2p} |\widehat{\Phi}(\sigma)|^2,
\end{equation*}
which together Lemma~\ref{lem-2-4} implies that $\mathfrak{K}_w\Phi\in \mathcal{S}_{p+1}^*(M)$ and
$\|\mathfrak{K}_w\Phi\|_{-(p+1)}\leq 2\alpha_w\|\Phi\|_{-p}$.
\end{proof}

\subsection{Representation of generalized weighted number operators}\label{subsec-3-2}

As is seen in Subsection~\ref{subsec-3-1}, 2D-GWN operators are actually a type of ``diagonal operators'' on $\mathcal{S}^*(M)$.
In the present subsection, we first show that 2D-GWN operators can be expressed in terms of generalized annihilation and creation operators on $\mathcal{S}^*(M)$.
And then we prove several further results about a special class of 2D-GWN operators, which we call 1D-GWN operators.

First, we recall the definition of generalized annihilation and creation operators on $\mathcal{S}^*(M)$ and their basic properties
(Cf Subsection 3.1 of \cite{wang-Lin}).
For $k\geq 0$, the generalized annihilation operator associated with $k$ is the continuous linear operator
$\mathfrak{a}_k$ on $\mathcal{S}^*(M)$ determined by
\begin{equation}\label{eq-annihilation}
\widehat{\mathfrak{a}_k\Phi}(\sigma)=[1-\mathbf{1}_{\sigma}(k)]\widehat{\Phi}(\sigma\cup k),\quad \sigma\in\Gamma,\,\Phi\in\mathcal{S}^*(M)
\end{equation}
and the generalized creation operators associated with $k$ is the continuous linear operator
$\mathfrak{a}_k^{\dag}$ on $\mathcal{S}^*(M)$ determined by
\begin{equation}\label{eq-creation}
\widehat{\mathfrak{a}_k^{\dag}\Phi}(\sigma)=\mathbf{1}_{\sigma}(k)\widehat{\Phi}(\sigma\setminus k),\quad \sigma\in\Gamma,\,\Phi\in\mathcal{S}^*(M),
\end{equation}
where $\sigma\cup k=\sigma\cup \{k\}$, $\sigma\setminus k=\sigma\setminus \{k\}$
and $\widehat{\Psi}$ means the Fock transform of a generalized functional $\Psi\in \mathcal{S}^*(M)$.

Let $\partial_k$ and $\partial_k^*$ be the annihilation and creation operators on $\mathcal{L}^2(M)$ associated with $k\geq 0$ (see Appendix for details).
Then it can be shown that
\begin{equation}\label{eq-3-12}
  \mathsf{R}\partial_k= \mathfrak{a}_k\mathsf{R},\quad \mathsf{R}\partial_k^*= \mathfrak{a}_k^{\dag}\mathsf{R},
\end{equation}
where $\mathsf{R}$ denotes the Riesz mapping from $\mathcal{L}^2(M)$ to its dual.
The above relations suggest that $\mathfrak{a}_k$ and $\mathfrak{a}_k^{\dag}$ can be viewed as $\mathcal{S}^*(M)$-versions of $\partial_k$ and $\partial_k^*$, respectively.
However, it should be emphasized that $\partial_k$ and $\partial_k^*$ are bounded (continuous) linear operators on $\mathcal{L}^2(M)$,
while $\mathfrak{a}_k$ and $\mathfrak{a}_k^{\dag}$ are continuous linear operators on $\mathcal{S}^*(M)$.
Additionally, $\partial_k^*$ is the adjoint of $\partial_k$, while $\mathfrak{a}_k^{\dag}$ is not the adjoint of $\mathfrak{a}_k$.
In fact, the adjoint of $\mathfrak{a}_k$ is a continuous linear operator on the testing functional space $\mathcal{S}(M)$.

Just like annihilation and creation operators on $\mathcal{L}^2(M)$, generalized annihilation and creation operators on $\mathcal{S}^*(M)$
also satisfy a canonical anti-commutation relation (CAR) in equal time, namely it holds true that
\begin{equation}\label{eq-3-13}
\mathfrak{a}_k^{\dag}\mathfrak{a}_k + \mathfrak{a}_k\mathfrak{a}_k^{\dag}=I,\quad k\geq 0,
\end{equation}
where $I$ means the identity operator on $\mathcal{S}^*(M)$. Additionally, one can also verify that
\begin{equation}\label{eq-3-14}
 \mathfrak{a}_j\mathfrak{a}_k = \mathfrak{a}_k\mathfrak{a}_j,\quad
 \mathfrak{a}_j^{\dag}\mathfrak{a}_k^{\dag} = \mathfrak{a}_k^{\dag}\mathfrak{a}_j^{\dag},\quad
\mathfrak{a}_j^{\dag}\mathfrak{a}_k = \mathfrak{a}_k\mathfrak{a}_j^{\dag}\qquad (j\neq k)
\end{equation}
and
\begin{equation}\label{eq-3-15}
\mathfrak{a}_j\mathfrak{a}_j = 0,\quad \mathfrak{a}_j^{\dag}\mathfrak{a}_j^{\dag}=0,
\end{equation}
where $j$, $k\geq 0$ and the righthand sides of (\ref{eq-3-15}) mean the null operator on $\mathcal{S}^*(M)$.

We now consider series in $\mathcal{S}^*(M)$. A double series $\sum_{j,k=0}^{\infty}\Phi_{jk}$ in $\mathcal{S}^*(M)$ is said to converge naturally
if its sequence of square-array partial sums
\begin{equation*}
  \Phi^{(n)}=\sum_{j,k=0}^n\Phi_{jk},\quad n\geq 0
\end{equation*}
converges strongly in $\mathcal{S}^*(M)$ as $n\rightarrow \infty$. In that case, we write $\sum_{j,k=0}^{\infty}\Phi_{jk}=\lim_{n\to \infty}\Phi^{(n)}$.

\begin{proposition}\label{prop-3-5}
Let $\sigma\in \Gamma$ and $\Phi\in \mathcal{S}^*(M)$. Then, for $j$, $k\geq 0$, one has
\begin{equation}\label{eq}
  \widehat{\mathfrak{a}_k^{\dag}\mathfrak{a}_j\mathfrak{a}_j^{\dag}\mathfrak{a}_k\Phi}(\sigma)=
  \left\{
    \begin{array}{ll}
      (1-\mathbf{1}_{\sigma}(j))\mathbf{1}_{\sigma}(k)\widehat{\Phi}(\sigma), & \hbox{$j\neq k$;} \\
      \mathbf{1}_{\sigma}(j)\widehat{\Phi}(\sigma), & \hbox{$j=k$,}
    \end{array}
  \right.
\end{equation}
where $\widehat{\mathfrak{a}_k^{\dag}\mathfrak{a}_j\mathfrak{a}_j^{\dag}\mathfrak{a}_k\Phi}$ denotes
the Fock transform of $\mathfrak{a}_k^{\dag}\mathfrak{a}_j\mathfrak{a}_j^{\dag}\mathfrak{a}_k\Phi$.
In particular, $\mathfrak{a}_j^{\dag}\mathfrak{a}_j\mathfrak{a}_j^{\dag}\mathfrak{a}_j = \mathfrak{a}_j^{\dag}\mathfrak{a}_j$ for all $j\geq 0$.
\end{proposition}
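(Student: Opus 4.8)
The plan is to compute the Fock transform $\widehat{\mathfrak{a}_k^{\dag}\mathfrak{a}_j\mathfrak{a}_j^{\dag}\mathfrak{a}_k\Phi}(\sigma)$ directly, by applying the four operators from the inside out and reading off each stage from the defining formulas (\ref{eq-annihilation}) and (\ref{eq-creation}). Since generalized functionals are completely determined by their Fock transforms, such a pointwise-in-$\sigma$ identity is exactly what is needed. Concretely, I would peel off the outermost operator first, writing $\widehat{\mathfrak{a}_k^{\dag}\mathfrak{a}_j\mathfrak{a}_j^{\dag}\mathfrak{a}_k\Phi}(\sigma) = \mathbf{1}_{\sigma}(k)\,\widehat{\mathfrak{a}_j\mathfrak{a}_j^{\dag}\mathfrak{a}_k\Phi}(\sigma\setminus k)$ by (\ref{eq-creation}), then apply (\ref{eq-annihilation}) for $\mathfrak{a}_j$, (\ref{eq-creation}) for $\mathfrak{a}_j^{\dag}$, and finally (\ref{eq-annihilation}) for $\mathfrak{a}_k$, keeping careful track at each step of how the argument set is modified by the operations $\cup k$, $\setminus j$, $\cup j$ and which indicator prefactors are generated.

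The computation hinges on three routine simplifications, which I would invoke as they arise. First, the prefactor $1-\mathbf{1}_{\sigma\setminus k}(k)$ produced at the innermost application of $\mathfrak{a}_k$ equals $1$ automatically, since $k\notin\sigma\setminus k$. Second, whenever a factor of the form $1-\mathbf{1}_{\tau}(j)$ is nonzero one has $j\notin\tau$, hence $\tau\setminus j=\tau$, which collapses the nested set operations so that no unwanted deletions survive. Third, the outermost factor $\mathbf{1}_{\sigma}(k)$ forces $k\in\sigma$, so that $(\sigma\setminus k)\cup k=\sigma$ and the surviving Fock transform of $\Phi$ is evaluated exactly at $\sigma$. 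After these reductions the expression reduces to $\mathbf{1}_{\sigma}(k)\bigl(1-\mathbf{1}_{\sigma\setminus k}(j)\bigr)\widehat{\Phi}(\sigma)$.

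The one point demanding genuine care — and the origin of the two cases in the statement — is the prefactor $\mathbf{1}_{\sigma\setminus k}(j)$ coming from the application of $\mathfrak{a}_j$: one must evaluate the indicator with respect to the correctly modified set $\sigma\setminus k$ rather than prematurely replacing it by $\sigma$. When $j\neq k$, deleting $k$ is irrelevant to membership of $j$, so $\mathbf{1}_{\sigma\setminus k}(j)=\mathbf{1}_{\sigma}(j)$ and the coefficient becomes $(1-\mathbf{1}_{\sigma}(j))\mathbf{1}_{\sigma}(k)$, as claimed. When $j=k$, however, $\mathbf{1}_{\sigma\setminus k}(j)=\mathbf{1}_{\sigma\setminus j}(j)=0$ identically, so the factor $1-\mathbf{1}_{\sigma\setminus k}(j)$ is simply $1$ and only $\mathbf{1}_{\sigma}(j)$ survives. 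This indicator bookkeeping is the main (albeit minor) obstacle; everything else is mechanical.

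Finally, for the ``in particular'' assertion I would specialize the $j=k$ case to obtain $\widehat{\mathfrak{a}_j^{\dag}\mathfrak{a}_j\mathfrak{a}_j^{\dag}\mathfrak{a}_j\Phi}(\sigma)=\mathbf{1}_{\sigma}(j)\widehat{\Phi}(\sigma)$, then compute by the same two-step procedure that $\widehat{\mathfrak{a}_j^{\dag}\mathfrak{a}_j\Phi}(\sigma)=\mathbf{1}_{\sigma}(j)\bigl(1-\mathbf{1}_{\sigma\setminus j}(j)\bigr)\widehat{\Phi}(\sigma)=\mathbf{1}_{\sigma}(j)\widehat{\Phi}(\sigma)$. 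Since the two Fock transforms agree for every $\sigma\in\Gamma$ and the Fock transform is injective on $\mathcal{S}^*(M)$, the operator identity $\mathfrak{a}_j^{\dag}\mathfrak{a}_j\mathfrak{a}_j^{\dag}\mathfrak{a}_j=\mathfrak{a}_j^{\dag}\mathfrak{a}_j$ follows at once.
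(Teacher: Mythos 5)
Your proposal is correct and takes essentially the same route as the paper's proof: a direct computation of the Fock transform, peeling the four operators from the outside using the defining formulas for $\mathfrak{a}_k$ and $\mathfrak{a}_k^{\dag}$, with the same pivotal observation that $\mathbf{1}_{\sigma\setminus k}(j)=\mathbf{1}_{\sigma}(j)$ when $j\neq k$ while $\mathbf{1}_{\sigma\setminus j}(j)=0$ when $j=k$, and concluding the operator identity $\mathfrak{a}_j^{\dag}\mathfrak{a}_j\mathfrak{a}_j^{\dag}\mathfrak{a}_j=\mathfrak{a}_j^{\dag}\mathfrak{a}_j$ from the injectivity of the Fock transform. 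The only cosmetic difference is that you state the indicator/set-collapsing simplifications up front, whereas the paper invokes them implicitly in-line; the substance is identical.
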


\begin{proof}
When $j\neq k$, we have $\mathbf{1}_{\sigma\setminus k}(j) = \mathbf{1}_{\sigma}(j)$,
which together with (\ref{eq-annihilation}) and (\ref{eq-creation}) gives
\begin{equation*}
\begin{split}
  \widehat{\mathfrak{a}_k^{\dag}\mathfrak{a}_j\mathfrak{a}_j^{\dag}\mathfrak{a}_k\Phi}(\sigma)
&= \mathbf{1}_{\sigma}(k)(1-\mathbf{1}_{\sigma}(j))\widehat{\mathfrak{a}_j^{\dag}\mathfrak{a}_k\Phi} ((\sigma\setminus k)\cup j)\\
&= \mathbf{1}_{\sigma}(k)(1-\mathbf{1}_{\sigma}(j))\widehat{\mathfrak{a}_k\Phi}(\sigma\setminus k)\\
&=\mathbf{1}_{\sigma}(k)(1-\mathbf{1}_{\sigma}(j))\widehat{\Phi}(\sigma).
\end{split}
\end{equation*}
When $j= k$, we can similarly get
$\widehat{\mathfrak{a}_k^{\dag}\mathfrak{a}_j\mathfrak{a}_j^{\dag}\mathfrak{a}_k\Phi}(\sigma)
=\widehat{\mathfrak{a}_j^{\dag}\mathfrak{a}_j\mathfrak{a}_j^{\dag}\mathfrak{a}_j\Phi}(\sigma)
=\mathbf{1}_{\sigma}(j)\widehat{\Phi}(\sigma)$.
Finally, a simple calculation shows that $\widehat{\mathfrak{a}_j^{\dag}\mathfrak{a}_j\Phi}(\sigma) =\mathbf{1}_{\sigma}(j)\widehat{\Phi}(\sigma)$,
thus
\begin{equation*}
  \widehat{\mathfrak{a}_j^{\dag}\mathfrak{a}_j\mathfrak{a}_j^{\dag}\mathfrak{a}_j\Phi}(\sigma)
= \widehat{\mathfrak{a}_j^{\dag}\mathfrak{a}_j\Phi}(\sigma),
\end{equation*}
which together with the arbitrariness of $\sigma\in \Gamma$ and $\Phi\in \mathcal{S}^*(M)$ implies that
$\mathfrak{a}_j^{\dag}\mathfrak{a}_j\mathfrak{a}_j^{\dag}\mathfrak{a}_j = \mathfrak{a}_j^{\dag}\mathfrak{a}_j$.
\end{proof}

The following theorem gives a formula that expresses a 2D-GWN operator in terms of generalized annihilation and creation operators.

\begin{theorem}\label{eq-2D-weighted-operator-representation}
Let $w$ be a 2D-weight and $\mathfrak{K}_w$ the 2D-GWN operator. Then, it holds true that
\begin{equation}\label{eq-GWO-representation}
  \mathfrak{K}_w\Phi = \sum_{j,k=0}^{\infty}w(j,k)\mathfrak{a}_k^{\dag}\mathfrak{a}_j\mathfrak{a}_j^{\dag}\mathfrak{a}_k\Phi,\quad \Phi \in \mathcal{S}^*(M),
\end{equation}
where the double series on the righthand side converges naturally.
\end{theorem}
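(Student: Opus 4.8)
The plan is to prove the identity first at the level of Fock transforms for fixed $\sigma\in\Gamma$, and then to upgrade pointwise convergence to the asserted natural convergence by means of a uniform growth bound together with the strong-convergence criterion of \cite{wang-chen-2}. Writing $\Phi^{(n)}=\sum_{j,k=0}^{n}w(j,k)\mathfrak{a}_k^{\dag}\mathfrak{a}_j\mathfrak{a}_j^{\dag}\mathfrak{a}_k\Phi$ for the square-array partial sums, linearity of the Fock transform together with Proposition~\ref{prop-3-5} yields, for every $\sigma\in\Gamma$,
\[
\widehat{\Phi^{(n)}}(\sigma)=\Bigg[\sum_{j=0}^{n}w(j,j)\mathbf{1}_{\sigma}(j)+\sum_{\substack{j,k=0\\ j\neq k}}^{n}w(j,k)\big(1-\mathbf{1}_{\sigma}(j)\big)\mathbf{1}_{\sigma}(k)\Bigg]\widehat{\Phi}(\sigma).
\]

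First I would establish pointwise convergence of $\widehat{\Phi^{(n)}}(\sigma)$ to $\vartheta_w(\sigma)\widehat{\Phi}(\sigma)$. Because $\sigma$ is finite, the diagonal sum stabilizes at $\sum_{j\in\sigma}w(j,j)$ once $n\ge\max\sigma$. In the off-diagonal sum the factor $\mathbf{1}_{\sigma}(k)$ confines $k$ to the finite set $\sigma$, while the factor $1-\mathbf{1}_{\sigma}(j)$ lets $j$ range over the infinite complement of $\sigma$; for each of the finitely many $k\in\sigma$ the inner series in $j$ converges absolutely precisely because the $2$D-weight condition gives $\sum_{j=0}^{\infty}w(j,k)\le\alpha_w<\infty$. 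The limit is thus exactly the bracketed expression defining $\vartheta_w(\sigma)$, so $\widehat{\Phi^{(n)}}(\sigma)\to\vartheta_w(\sigma)\widehat{\Phi}(\sigma)=\widehat{\mathfrak{K}_w\Phi}(\sigma)$ by Theorem~\ref{basic-theorem-1}.

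Next I would produce a bound on $|\widehat{\Phi^{(n)}}(\sigma)|$ independent of $n$. Dropping the truncations and using that all terms are nonnegative, the bracket above is dominated by $\vartheta_w(\sigma)$; combining Proposition~\ref{prop-3-1}, the elementary inequality $\#(\sigma)\le\lambda_{\sigma}$, and the characterization $|\widehat{\Phi}(\sigma)|\le C\lambda_{\sigma}^{p}$ from Lemma~\ref{lem-2-4} gives
\[
\big|\widehat{\Phi^{(n)}}(\sigma)\big|\le\vartheta_w(\sigma)\,\big|\widehat{\Phi}(\sigma)\big|\le 2\alpha_w\#(\sigma)\,\big|\widehat{\Phi}(\sigma)\big|\le 2\alpha_w C\,\lambda_{\sigma}^{\,p+1},\qquad\sigma\in\Gamma,
\]
with $C,p$ depending only on $\Phi$. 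With pointwise convergence and this uniform growth bound in hand, the criterion of Theorem~10 of \cite{wang-chen-2} shows that $(\Phi^{(n)})$ converges strongly in $\mathcal{S}^*(M)$, which is exactly natural convergence of the double series. Denoting its limit by $\Psi$, we have $\widehat{\Psi}(\sigma)=\lim_{n}\widehat{\Phi^{(n)}}(\sigma)=\widehat{\mathfrak{K}_w\Phi}(\sigma)$ for every $\sigma$, and since a generalized functional is determined by its Fock transform, $\Psi=\mathfrak{K}_w\Phi$, which is the assertion.

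The main obstacle is the passage from pointwise to natural convergence: one must verify that the square-array partial sums, and not merely their values at each $\sigma$, converge in the strong topology. The subtle point is that for a fixed $\sigma$ infinitely many off-diagonal terms survive the truncation, so the $2$D-weight summability $\alpha_w<\infty$ is genuinely needed both to make the pointwise limit exist and, via Proposition~\ref{prop-3-1}, to furnish the $n$-independent bound $2\alpha_w C\lambda_{\sigma}^{p+1}$ that feeds the convergence criterion.
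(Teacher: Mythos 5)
Your proposal is correct and follows essentially the same route as the paper's own proof: both compute the Fock transforms of the square-array partial sums via Proposition~\ref{prop-3-5}, verify pointwise convergence to $\vartheta_w(\sigma)\widehat{\Phi}(\sigma)$, establish the $n$-independent bound $2\alpha_w C\lambda_{\sigma}^{p+1}$ from Proposition~\ref{prop-3-1}, $\#(\sigma)\le\lambda_{\sigma}$ and Lemma~\ref{lem-2-4}, and then invoke the convergence criterion of Theorem~10 of \cite{wang-chen-2}. Your closing step, which names the strong limit $\Psi$ and identifies it with $\mathfrak{K}_w\Phi$ through injectivity of the Fock transform, only makes explicit what the paper leaves implicit.
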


\begin{proof}
Let $\Phi\in \mathcal{S}^*(M)$. We need to show that the series on the righthand side converges naturally to $\mathfrak{K}_w\Phi$.
To this end, we write
\begin{equation*}
  \Psi_n = \sum_{j,k=0}^nw(j,k)\mathfrak{a}_k^{\dag}\mathfrak{a}_j\mathfrak{a}_j^{\dag}\mathfrak{a}_k\Phi,\quad n\geq 0.
\end{equation*}
By Proposition~\ref{prop-3-5}, we have
\begin{equation*}
\begin{split}
  \widehat{\Psi_n}(\sigma)
   & = \sum_{j,k=0}^nw(j,k)\widehat{\mathfrak{a}_k^{\dag}\mathfrak{a}_j\mathfrak{a}_j^{\dag}\mathfrak{a}_k\Phi}(\sigma)\\
   & = \sum_{j=0}^n\Big[\mathbf{1}_{\sigma}(j)w(j,j) +
        \sum_{k=0,k\neq j}^n(1-\mathbf{1}_{\sigma}(j))\mathbf{1}_{\sigma}(k)w(j,k) \Big]\widehat{\Phi}(\sigma)\\
   & = \Big[\sum_{j=0}^n\mathbf{1}_{\sigma}(j)w(j,j) +
        \sum_{j,k=0}^n(1-\mathbf{1}_{\sigma}(j))\mathbf{1}_{\sigma}(k)w(j,k) \Big]\widehat{\Phi}(\sigma),
\end{split}
\end{equation*}
where $\sigma\in \Gamma$. This together with Definition~\ref{def-3-1} and Theorem~\ref{basic-theorem-1} implies that
\begin{equation*}
 \lim_{n\to \infty} \widehat{\Psi_n}(\sigma)= \vartheta_w(\sigma)\widehat{\Phi}(\sigma)=\widehat{\mathfrak{K}_w\Phi}(\sigma),\quad
 \forall\,\sigma\in \Gamma.
\end{equation*}
On the other hand, by the characterization theorem of generalized functionals (Lemma~\ref{lem-2-4}), there exist constants $C\geq 0$ and
$p\geq 0$ such that
\begin{equation*}
  |\widehat{\Phi}(\sigma)|\leq C\lambda_{\sigma}^p,\quad \sigma\in \Gamma,
\end{equation*}
which, together with inequalities $\vartheta_w(\sigma)\leq 2\alpha_w \#(\sigma)$
and $\#(\sigma)\leq \lambda_{\sigma}$, yields
\begin{equation*}
\begin{split}
  \sup_{n\geq 0}|\widehat{\Psi_n}(\sigma)|
    &= \sup_{n\geq 0}\Big[\sum_{j=0}^n\mathbf{1}_{\sigma}(j)w(j,j) +
        \sum_{j,k=0}^n(1-\mathbf{1}_{\sigma}(j))\mathbf{1}_{\sigma}(k)w(j,k) \Big]|\widehat{\Phi}(\sigma)|\\
    & \leq \vartheta_w(\sigma)|\widehat{\Phi}(\sigma)|\\
    & \leq 2\alpha_wC\lambda_{\sigma}^{p+1},
\end{split}
\end{equation*}
where $\sigma \in \Gamma$.
Therefore, using the criterion for convergence of generalized functional sequences (see Theorem 10 of \cite{wang-chen-2}), we come to the conclusion that
$\Psi_n \rightarrow \mathfrak{K}_w\Phi$ strongly as $n\rightarrow \infty$, namely (\ref{eq-GWO-representation}) holds true.
\end{proof}

\begin{remark}\label{rem-3-1}
Let $w$ be a 2D-weight. Then, according to Appendix (Section~\ref{sec-4}), the 2D-WN operator $S_w$ in $\mathcal{L}^2(M)$ has a
representation of the form
\begin{equation}\label{eq-3-18}
  S_w\xi=\sum_{j,k=0}^{\infty}w(j,k)\partial_k^*\partial_j\partial_j^*\partial_k\xi,\quad \xi \in \mathrm{Dom}\, S_w.
\end{equation}
Comparing (\ref{eq-GWO-representation}) and (\ref{eq-3-18}), one can further see the similarity as well as the difference between
the 2D-GWN operator $\mathfrak{K}_w$ on $\mathcal{S}^*(M)$ and the 2D-WN operator $S_w$ in $\mathcal{L}^2(M)$.
\end{remark}

We now turn our attention to a special class of 2D-GWN operators, which are actually determined by one-variable functions on $\mathbb{N}$.

\begin{theorem}\label{thr-1D-weighted-number-operator}
Let $u$ be a bounded nonnegative function on $\mathbb{N}$. Then there exists a unique continuous linear operator
$\mathfrak{N}_u \colon \mathcal{S}^*(M)\rightarrow \mathcal{S}^*(M)$ such that
\begin{equation}\label{eq-2D-weight-induced-by-1D}
  \widehat{\mathfrak{N}_u\Phi}(\sigma) = \#_u(\sigma)\widehat{\Phi}(\sigma),\quad \sigma\in \Gamma,\, \Phi \in \mathcal{S}^*(M),
\end{equation}
where $\#_u(\sigma)=\sum_{k=0}^{\infty}\mathbf{1}_{\sigma}(k)u(k)$.
\end{theorem}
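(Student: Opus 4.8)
The plan is to reduce the statement entirely to the already-established two-dimensional case, Theorem~\ref{basic-theorem-1}, by manufacturing a 2D-weight whose spectral function coincides with $\#_u$. First I would define $w$ on $\mathbb{N}\times\mathbb{N}$ by setting $w(j,k)=u(j)$ if $j=k$ and $w(j,k)=0$ if $j\neq k$; that is, $w$ is the diagonal weight carried by $u$. All the work then consists of verifying that this $w$ qualifies as a 2D-weight and that its associated spectral function is exactly $\#_u$, after which $\mathfrak{N}_u$ is obtained simply as $\mathfrak{K}_w$.

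Next I would check that this $w$ is a legitimate 2D-weight in the sense of Definition~\ref{def-3-1}. For each fixed $k$ the inner sum over $j$ collapses to a single term, so $\sum_{j=0}^{\infty} w(j,k)=u(k)$, whence $\sup_{k\geq 0}\sum_{j=0}^{\infty} w(j,k)=\sup_{k\geq 0}u(k)<\infty$ because $u$ is bounded by hypothesis. Hence $w$ satisfies (\ref{eq-weihgt-function}) and $\alpha_w=\sup_{k\geq 0}u(k)$.

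The crucial computation is to identify the spectral function $\vartheta_w$ with $\#_u$. Plugging the diagonal $w$ into the defining formula (\ref{eq-spectral-function}), the first sum becomes $\sum_{j=0}^{\infty}\mathbf{1}_{\sigma}(j)w(j,j)=\sum_{j=0}^{\infty}\mathbf{1}_{\sigma}(j)u(j)=\#_u(\sigma)$, while in the second double sum the factor $w(j,k)$ forces $k=j$, so the only surviving summands carry the factor $(1-\mathbf{1}_{\sigma}(j))\mathbf{1}_{\sigma}(j)$, which vanishes identically; thus the second sum is zero. Therefore $\vartheta_w(\sigma)=\#_u(\sigma)$ for every $\sigma\in\Gamma$.

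Finally I would set $\mathfrak{N}_u:=\mathfrak{K}_w$, the 2D-GWN operator associated with this $w$. By Theorem~\ref{basic-theorem-1} this $\mathfrak{K}_w$ is a continuous linear operator on $\mathcal{S}^*(M)$ satisfying $\widehat{\mathfrak{K}_w\Phi}(\sigma)=\vartheta_w(\sigma)\widehat{\Phi}(\sigma)=\#_u(\sigma)\widehat{\Phi}(\sigma)$, which is precisely (\ref{eq-2D-weight-induced-by-1D}); this establishes existence. Uniqueness is immediate because a generalized functional is completely determined by its Fock transform (as recalled just before Lemma~\ref{lem-2-4}): two operators obeying (\ref{eq-2D-weight-induced-by-1D}) would produce Fock transforms agreeing on every $\sigma$ for every $\Phi$, hence would coincide. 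I do not expect a genuine analytic obstacle here; the only thing one must \emph{see} is the correct diagonal weight, and once it is in hand both the boundedness check and the spectral-function identity are routine, with all continuity questions already settled inside Theorem~\ref{basic-theorem-1}.
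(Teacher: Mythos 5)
Your proposal is correct and follows essentially the same route as the paper: both define the diagonal 2D-weight $w^{(u)}(j,k)=u(k)\delta_{jk}$, verify it satisfies Definition~\ref{def-3-1} with $\alpha_{w^{(u)}}=\sup_{k\geq 0}u(k)$, observe that its spectral function reduces to $\#_u$ (the off-diagonal double sum vanishing because $(1-\mathbf{1}_{\sigma}(j))\mathbf{1}_{\sigma}(j)=0$), and then take $\mathfrak{N}_u=\mathfrak{K}_{w^{(u)}}$, with uniqueness following from the injectivity of the Fock transform. There is no substantive difference between your argument and the paper's proof.
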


\begin{proof}
Define a nonnegative function $w^{(u)}$ on $\mathbb{N}\times \mathbb{N}$ as follows
\begin{equation*}
  w^{(u)}(j,k)=
  \left\{
    \begin{array}{ll}
      u(k), & \hbox{$j=k$, $(j,k)\in \mathbb{N}\times \mathbb{N}$;} \\
      0, & \hbox{$j\neq k$, $(j,k)\in \mathbb{N}\times \mathbb{N}$.}
    \end{array}
  \right.
\end{equation*}
It is easy to verify that $w^{(u)}$ is a $2$D-weight and $\sup_{k\geq 0}\sum_{j=0}^{\infty}w^{(u)}(j,k)=\sup_{k\geq 0}u(k)$.
And moreover the spectral function $\vartheta_{w^{(u)}}$ associated with $w^{(u)}$ coincides with the function $\#_u(\cdot)$, namely
\begin{equation*}
  \vartheta_{w^{(u)}}(\sigma) = \#_u(\sigma),\quad \sigma\in \Gamma.
\end{equation*}
Now put $\mathfrak{N}_u=\mathfrak{K}_{w^{(u)}}$. Then $\mathfrak{N}_u$ is the desired operator.
\end{proof}

\begin{definition}\label{def-1D-weighted-number-operator}
For a bounded nonnegative function $u$ on $\mathbb{N}$, the operator $\mathfrak{N}_u$ indicated in Theorem~\ref{thr-1D-weighted-number-operator}
is called the 1D-generalized weighted number operator (for short, the 1D-GWN operator) on $\mathcal{S}^*(M)$ associated with $u$.
\end{definition}

As can be seen in the proof of Theorem~\ref{thr-1D-weighted-number-operator}, 1D-GWN operators are actually a special class of 2D GWN operator on $\mathcal{S}^*(M)$.
Hence they can be expected to have better properties. This is indeed the case.

\begin{theorem}\label{thr-GWO-regularity-1D}
Let $\mathfrak{N}_u$ be the 1D-GWN operator on $\mathcal{S}^*(M)$ associated with a bounded nonnegative function $u$ on $\mathbb{N}$.
Then, for each $p\geq 0$, one has $\mathfrak{N}_u(\mathcal{S}_p^*(M))\subset \mathcal{S}_{p+1}^*(M)$ and moreover
\begin{equation}\label{eq-1D-norm-estimate}
  \|\mathfrak{N}_u\Phi\|_{-(p+1)}\leq \beta_u\|\Phi\|_{-p},\quad \Phi \in \mathcal{S}_p^*(M).
\end{equation}
where $\beta_u=\sup_{k\geq 0}u(k)$.
\end{theorem}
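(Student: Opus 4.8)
The plan is to exploit the diagonal structure of the underlying $2$D-weight in order to obtain the sharp constant $\beta_u$, rather than merely invoking the general estimate of Theorem~\ref{thr-3-3}. Recall from the proof of Theorem~\ref{thr-1D-weighted-number-operator} that $\mathfrak{N}_u=\mathfrak{K}_{w^{(u)}}$ and that $\alpha_{w^{(u)}}=\sup_{k\geq 0}u(k)=\beta_u$. A direct appeal to Theorem~\ref{thr-3-3} would therefore give only the weaker bound $\|\mathfrak{N}_u\Phi\|_{-(p+1)}\leq 2\beta_u\|\Phi\|_{-p}$, so the real content of this theorem is to remove the factor $2$.

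The key observation is that, for the diagonal weight $w^{(u)}$, the second term in the defining sum (\ref{eq-spectral-function}) of the spectral function vanishes identically: whenever $w^{(u)}(j,k)\neq 0$ one has $j=k$, and then $(1-\mathbf{1}_{\sigma}(j))\mathbf{1}_{\sigma}(k)=(1-\mathbf{1}_{\sigma}(k))\mathbf{1}_{\sigma}(k)=0$. Consequently $\vartheta_{w^{(u)}}(\sigma)=\#_u(\sigma)=\sum_{k\in\sigma}u(k)$, exactly as recorded in Theorem~\ref{thr-1D-weighted-number-operator}. From this I would read off the sharp pointwise estimate
\begin{equation*}
  0\leq \#_u(\sigma)=\sum_{k\in\sigma}u(k)\leq \beta_u\,\#(\sigma),\quad \sigma\in\Gamma,
\end{equation*}
with no factor $2$, precisely because only the single ``diagonal'' contribution survives; this is the improvement over Proposition~\ref{prop-3-1}.

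With this in hand, the argument proceeds as in Theorem~\ref{thr-3-3}. First I would fix $\Phi\in\mathcal{S}_p^*(M)$ and, using the defining relation (\ref{eq-2D-weight-induced-by-1D}) together with the bound above and the elementary inequality $\#(\sigma)\leq\lambda_{\sigma}$, estimate
\begin{equation*}
  \sum_{\sigma\in\Gamma}\lambda_{\sigma}^{-2(p+1)}|\widehat{\mathfrak{N}_u\Phi}(\sigma)|^2
  =\sum_{\sigma\in\Gamma}\lambda_{\sigma}^{-2(p+1)}(\#_u(\sigma))^2|\widehat{\Phi}(\sigma)|^2
  \leq \beta_u^2\sum_{\sigma\in\Gamma}\lambda_{\sigma}^{-2p}|\widehat{\Phi}(\sigma)|^2
  =\beta_u^2\|\Phi\|_{-p}^2,
\end{equation*}
where the inequality absorbs both $\#_u(\sigma)\leq\beta_u\#(\sigma)$ and $(\#(\sigma))^2\leq\lambda_{\sigma}^2$. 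Invoking Lemma~\ref{lem-2-4} then yields $\mathfrak{N}_u\Phi\in\mathcal{S}_{p+1}^*(M)$ together with the desired norm bound $\|\mathfrak{N}_u\Phi\|_{-(p+1)}\leq\beta_u\|\Phi\|_{-p}$; since $p\geq 0$ is arbitrary, both conclusions follow.

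I do not anticipate a serious obstacle here: the estimate is routine once the diagonal weight is seen to annihilate the off-diagonal term of the spectral function. The only point that genuinely requires care is the bookkeeping of the constant, namely verifying that the factor $2$ appearing in Proposition~\ref{prop-3-1} and Theorem~\ref{thr-3-3} is truly absent in the $1$D setting and that $\beta_u=\sup_{k\geq 0}u(k)$ is indeed the correct sharp multiplier.
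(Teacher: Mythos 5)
Your proposal is correct and follows essentially the same route as the paper's own proof: both use the defining relation $\widehat{\mathfrak{N}_u\Phi}(\sigma)=\#_u(\sigma)\widehat{\Phi}(\sigma)$, the pointwise bound $\#_u(\sigma)\leq\beta_u\#(\sigma)\leq\beta_u\lambda_{\sigma}$, and the weighted $\ell^2$ expression of the dual norms to get the estimate without the factor $2$. The extra discussion of the diagonal weight $w^{(u)}$ is sound motivation but adds nothing beyond what the paper already records in Theorem~\ref{thr-1D-weighted-number-operator} and Remark~\ref{rem-3-2}.
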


\begin{proof}
Let $\Phi \in \mathcal{S}_p^*(M)$. By using Theorem~\ref{thr-1D-weighted-number-operator}, we find
\begin{equation*}
  \sum_{\sigma\in \Gamma}\lambda_{\sigma}^{-2(p+1)}|\widehat{\mathfrak{N}_u\Phi}(\sigma)|^2
  = \sum_{\sigma\in \Gamma}\lambda_{\sigma}^{-2(p+1)}(\#_u(\sigma))^2|\widehat{\Phi}(\sigma)|^2
 \leq \beta_u^2\sum_{\sigma\in \Gamma}\lambda_{\sigma}^{-2(p+1)}(\#(\sigma))^2|\widehat{\Phi}(\sigma)|^2,
\end{equation*}
which together with $\#(\sigma)\leq \lambda_{\sigma}$ implies that
\begin{equation*}
  \sum_{\sigma\in \Gamma}\lambda_{\sigma}^{-2(p+1)}|\widehat{\mathfrak{N}_u\Phi}(\sigma)|^2
  \leq \beta_u^2\sum_{\sigma\in \Gamma}\lambda_{\sigma}^{-2p}|\widehat{\Phi}(\sigma)|^2,
\end{equation*}
which means that $\mathfrak{N}_u\Phi\in \mathcal{S}_{p+1}^*(M)$ and $\|\mathfrak{N}_u\Phi\|_{-(p+1)}\leq \beta_u\|\Phi\|_{-p}$.
\end{proof}

\begin{remark}\label{rem-3-2}
By using Theorem~\ref{thr-3-3}, one can only get $\|\mathfrak{N}_u\Phi\|_{-(p+1)}\leq 2\beta_u\|\Phi\|_{-p}$, which is not as good as
(\ref{eq-1D-norm-estimate}).
\end{remark}

\begin{theorem}\label{thr-3-9}
Let $\mathfrak{N}_u$ be the 1D-GWN operator associated with a bounded nonnegative function $u$ on $\mathbb{N}$.
Then, $\mathfrak{N}_u$ has a representation of the following form
\begin{equation}\label{eq}
  \mathfrak{N}_u\Phi =\sum_{k=0}^{\infty}u(k)\mathfrak{a}_k^{\dag}\mathfrak{a}_k\Phi,\quad \Phi \in \mathcal{S}^*(M),
\end{equation}
where the series on the righthand side converges strongly.
\end{theorem}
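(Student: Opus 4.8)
The plan is to verify that the partial sums of the single series on the righthand side have Fock transforms converging pointwise to $\widehat{\mathfrak{N}_u\Phi}$, and then to promote this pointwise convergence to strong convergence by exhibiting a $\lambda_{\sigma}$-polynomial dominating bound that is uniform in the partial-sum index, so that the convergence criterion of Theorem~10 in \cite{wang-chen-2} applies. This mirrors exactly the structure of the proof of Theorem~\ref{eq-2D-weighted-operator-representation}, but is simpler because only a single series is involved.

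First I would fix $\Phi\in\mathcal{S}^*(M)$ and set $\Psi_n=\sum_{k=0}^{n}u(k)\mathfrak{a}_k^{\dag}\mathfrak{a}_k\Phi$ for $n\geq 0$. Recall from the proof of Proposition~\ref{prop-3-5} that $\widehat{\mathfrak{a}_k^{\dag}\mathfrak{a}_k\Phi}(\sigma)=\mathbf{1}_{\sigma}(k)\widehat{\Phi}(\sigma)$ for every $k\geq 0$ and $\sigma\in\Gamma$. Taking Fock transforms and using linearity then gives $\widehat{\Psi_n}(\sigma)=\big[\sum_{k=0}^{n}u(k)\mathbf{1}_{\sigma}(k)\big]\widehat{\Phi}(\sigma)$. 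Since only finitely many $k$ satisfy $\mathbf{1}_{\sigma}(k)=1$, the bracketed sum is eventually constant in $n$, so letting $n\to\infty$ yields $\lim_{n\to\infty}\widehat{\Psi_n}(\sigma)=\#_u(\sigma)\widehat{\Phi}(\sigma)=\widehat{\mathfrak{N}_u\Phi}(\sigma)$ for every $\sigma\in\Gamma$, the last equality being Theorem~\ref{thr-1D-weighted-number-operator}.

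Next I would produce the uniform bound. By the characterization theorem (Lemma~\ref{lem-2-4}) there are constants $C\geq 0$ and $p\geq 0$ with $|\widehat{\Phi}(\sigma)|\leq C\lambda_{\sigma}^{p}$. Using $\sum_{k=0}^{n}u(k)\mathbf{1}_{\sigma}(k)\leq \#_u(\sigma)\leq \beta_u\#(\sigma)$ together with $\#(\sigma)\leq \lambda_{\sigma}$, one obtains $\sup_{n\geq 0}|\widehat{\Psi_n}(\sigma)|\leq \beta_u C\lambda_{\sigma}^{p+1}$ for all $\sigma\in\Gamma$, which is precisely the polynomial domination required by the criterion. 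Applying Theorem~10 of \cite{wang-chen-2} then shows $\Psi_n\to\mathfrak{N}_u\Phi$ strongly as $n\to\infty$, establishing the asserted representation.

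The calculation is routine once Proposition~\ref{prop-3-5} is available, so I expect no genuine obstacle; the only point demanding care is that the dominating bound be independent of $n$, which is why I estimate the truncated sum by the full $\#_u(\sigma)$ before invoking the criterion. As an alternative route, one could deduce the result directly from Theorem~\ref{eq-2D-weighted-operator-representation} applied to the $2$D-weight $w^{(u)}$ of Theorem~\ref{thr-1D-weighted-number-operator}: its off-diagonal entries vanish, so the square-array partial sums collapse to $\sum_{k=0}^{n}u(k)\mathfrak{a}_k^{\dag}\mathfrak{a}_k\mathfrak{a}_k^{\dag}\mathfrak{a}_k\Phi$, and the identity $\mathfrak{a}_k^{\dag}\mathfrak{a}_k\mathfrak{a}_k^{\dag}\mathfrak{a}_k=\mathfrak{a}_k^{\dag}\mathfrak{a}_k$ from Proposition~\ref{prop-3-5} reduces these to the partial sums of the single series, whence natural convergence of the double series is exactly strong convergence of the single one.
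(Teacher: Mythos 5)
Your proposal is correct, and in fact it contains two proofs: the one you relegate to an afterthought at the end is precisely the paper's own argument, while your primary argument takes a genuinely different (more self-contained) route. The paper proves the theorem by reduction to the 2D case: by Proposition~\ref{prop-3-5} one has $\mathfrak{a}_k^{\dag}\mathfrak{a}_k=\mathfrak{a}_k^{\dag}\mathfrak{a}_k\mathfrak{a}_k^{\dag}\mathfrak{a}_k$, and since the off-diagonal entries of $w^{(u)}$ vanish, the partial sums $\sum_{k=0}^{n}u(k)\mathfrak{a}_k^{\dag}\mathfrak{a}_k\Phi$ coincide with the square-array partial sums $\sum_{j,k=0}^{n}w^{(u)}(j,k)\,\mathfrak{a}_k^{\dag}\mathfrak{a}_j\mathfrak{a}_j^{\dag}\mathfrak{a}_k\Phi$; Theorem~\ref{eq-2D-weighted-operator-representation} applied to $w^{(u)}$, together with $\mathfrak{K}_{w^{(u)}}=\mathfrak{N}_u$ from the proof of Theorem~\ref{thr-1D-weighted-number-operator}, then converts natural convergence of the double series into strong convergence of the single one. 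Your main argument instead redoes the convergence-criterion template directly for the single series: you compute $\widehat{\Psi_n}(\sigma)=\bigl[\sum_{k=0}^{n}u(k)\mathbf{1}_{\sigma}(k)\bigr]\widehat{\Phi}(\sigma)$ from $\widehat{\mathfrak{a}_k^{\dag}\mathfrak{a}_k\Phi}(\sigma)=\mathbf{1}_{\sigma}(k)\widehat{\Phi}(\sigma)$, identify the pointwise limit as $\#_u(\sigma)\widehat{\Phi}(\sigma)=\widehat{\mathfrak{N}_u\Phi}(\sigma)$, establish the $n$-uniform domination $\sup_{n\geq 0}|\widehat{\Psi_n}(\sigma)|\leq \beta_u C\lambda_{\sigma}^{p+1}$ via Lemma~\ref{lem-2-4} and $\#_u(\sigma)\leq\beta_u\#(\sigma)\leq\beta_u\lambda_{\sigma}$, and invoke Theorem~10 of \cite{wang-chen-2}; every step is sound, and this mirrors the paper's proof of Theorem~\ref{eq-2D-weighted-operator-representation} specialized to one index. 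What your direct route buys is independence from the 2D representation theorem and from the notion of natural convergence of double series, together with the marginally sharper dominating constant $\beta_u$ in place of $2\alpha_{w^{(u)}}=2\beta_u$ (in the same spirit as Remark~\ref{rem-3-2}); what the paper's reduction buys is brevity and reuse, since once Theorem~\ref{eq-2D-weighted-operator-representation} is available the 1D statement follows in two lines with no new estimates.
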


\begin{proof}
Let $\Phi \in \mathcal{S}^*(M)$. According to Proposition~\ref{prop-3-5} and the proof of Theorem~\ref{thr-1D-weighted-number-operator}, we have
\begin{equation*}
\sum_{k=0}^nu(k)\mathfrak{a}_k^{\dag}\mathfrak{a}_k\Phi
= \sum_{k=0}^nu(k)\mathfrak{a}_k^{\dag}\mathfrak{a}_k\mathfrak{a}_k^{\dag}\mathfrak{a}_k\Phi
= \sum_{j,k=0}^nw^{(u)}(j,k)\mathfrak{a}_k^{\dag}\mathfrak{a}_j\mathfrak{a}_j^{\dag}\mathfrak{a}_k\Phi,\quad n\geq 1,
\end{equation*}
Thus, by Theorem~\ref{eq-2D-weighted-operator-representation} and the proof of Theorem~\ref{thr-1D-weighted-number-operator}, we get to know that
\begin{equation*}
   \sum_{k=0}^{\infty}u(k)\mathfrak{a}_k^{\dag}\mathfrak{a}_k\Phi
     = \sum_{j,k=0}^{\infty}w^{(u)}(j,k)\mathfrak{a}_k^{\dag}\mathfrak{a}_j\mathfrak{a}_j^{\dag}\mathfrak{a}_k\Phi
     = \mathfrak{K}_{w^{(u)}}\Phi= \mathfrak{N}_u\Phi,
\end{equation*}
where the first series converges strongly because the second one converges naturally.
\end{proof}

\begin{corollary}\label{coro-3-10}
There exists a unique continuous linear operator $\mathfrak{N}\colon \mathcal{S}^*(M) \rightarrow \mathcal{S}^*(M)$ such that
\begin{equation}\label{eq}
  \widehat{\mathfrak{N}\Phi}(\sigma) = \#(\sigma)\widehat{\Phi}(\sigma),\quad \sigma\in \Gamma,\, \Phi \in \mathcal{S}^*(M),
\end{equation}
where $\#(\sigma)$ denotes the cardinality of $\sigma$ as a set.
\end{corollary}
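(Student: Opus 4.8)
The plan is to recognize this corollary as the special case of Theorem~\ref{thr-1D-weighted-number-operator} obtained by choosing $u$ to be the constant function $1$ on $\mathbb{N}$. First I would verify that the function $u\colon \mathbb{N}\to \mathbb{R}_+$ defined by $u(k)=1$ for all $k\geq 0$ is indeed a bounded nonnegative function on $\mathbb{N}$, with $\sup_{k\geq 0}u(k)=1$, so that Theorem~\ref{thr-1D-weighted-number-operator} applies to it and produces a continuous linear operator $\mathfrak{N}_u$ on $\mathcal{S}^*(M)$.

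Next, I would compute the counting-type function $\#_u$ for this particular choice of $u$. By the formula $\#_u(\sigma)=\sum_{k=0}^{\infty}\mathbf{1}_{\sigma}(k)u(k)$ appearing in Theorem~\ref{thr-1D-weighted-number-operator}, substituting $u\equiv 1$ yields
\begin{equation*}
\#_u(\sigma)=\sum_{k=0}^{\infty}\mathbf{1}_{\sigma}(k)=\#(\sigma),\quad \sigma\in \Gamma,
\end{equation*}
the cardinality of $\sigma$. Consequently, the operator $\mathfrak{N}_u$ supplied by Theorem~\ref{thr-1D-weighted-number-operator} is a continuous linear operator on $\mathcal{S}^*(M)$ whose Fock transform satisfies precisely $\widehat{\mathfrak{N}_u\Phi}(\sigma)=\#(\sigma)\widehat{\Phi}(\sigma)$ for all $\sigma\in \Gamma$ and $\Phi\in \mathcal{S}^*(M)$. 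Setting $\mathfrak{N}=\mathfrak{N}_u$ then establishes the existence assertion.

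Finally, for uniqueness I would invoke the fact, recorded just after~\eqref{eq-2-13}, that a generalized functional is completely determined by its Fock transform: if two continuous linear operators on $\mathcal{S}^*(M)$ both satisfy the prescribed identity, then for every $\Phi\in\mathcal{S}^*(M)$ their images have identical Fock transforms and hence coincide, forcing the two operators to be equal. There is no genuine obstacle in this argument; the proof is essentially immediate once the right specialization is identified, and the only point requiring any care is confirming that the constant function $1$ meets the boundedness hypothesis of Theorem~\ref{thr-1D-weighted-number-operator}, which it plainly does.
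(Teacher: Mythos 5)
Your proposal is correct and matches the paper's own proof: the paper likewise sets $\mathfrak{N}=\mathfrak{N}_{u_0}$ with $u_0\equiv 1$ and invokes Theorem~\ref{thr-1D-weighted-number-operator}, under which $\#_{u_0}(\sigma)=\#(\sigma)$. Your explicit uniqueness argument via Fock transforms is the same mechanism already packaged in that theorem's uniqueness clause, so nothing essential differs.
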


\begin{proof}
Let $\mathfrak{N} = \mathfrak{N}_{u_0}$, where $u_0$ is the constant function on $\mathbb{N}$ given by $u_0(j)=1$, $\forall\, j\in \mathbb{N}$.
Then, by Theorem~\ref{thr-1D-weighted-number-operator}, we know that $\mathfrak{N}$ is the desired.
\end{proof}

\begin{remark}\label{rem-3-3}
We call $\mathfrak{N}$ the generalized number operator on $\mathcal{S}^*(M)$.
\end{remark}

From Theorem~\ref{thr-3-9} and the proof of Corollary~\ref{coro-3-10}, one can get a representation of  $\mathfrak{N}$
in terms of the family $\{\mathfrak{a}_k, \mathfrak{a}_k^{\dag}\mid k\geq 0\}$, namely
\begin{equation}\label{eq}
  \mathfrak{N}\Phi =\sum_{k=0}^{\infty}\mathfrak{a}_k^{\dag}\mathfrak{a}_k\Phi,\quad \Phi \in \mathcal{S}^*(M),
\end{equation}
where the series on the righthand side converges strongly.

\subsection{Commutation relations}

In this subsection, we examine commutation relations between GWN operators and generalized annihilation (creation) operators.

\begin{theorem}\label{thr-1D-commutation}
Let $\mathfrak{N}_u$ be the 1D-GWN operator associated with a bounded nonnegative function $u$ on $\mathbb{N}$.
Then, for all $k\geq 0$, it holds true that
\begin{equation}\label{eq}
  \mathfrak{N}_u\mathfrak{a}_k=\mathfrak{a}_k\mathfrak{N}_u -u(k)\mathfrak{a}_k,\quad
   \mathfrak{N}_u\mathfrak{a}_k^{\dag}=\mathfrak{a}_k^{\dag}\mathfrak{N}_u +u(k)\mathfrak{a}_k^{\dag},\quad
   \mathfrak{N}_u\mathfrak{a}_k^{\dag}\mathfrak{a}_k= \mathfrak{a}_k^{\dag}\mathfrak{a}_k\mathfrak{N}_u.
\end{equation}
\end{theorem}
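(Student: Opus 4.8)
The plan is to prove all three identities by comparing Fock transforms, since (as recalled just after (\ref{eq-2-13})) a generalized functional is completely determined by its Fock transform. It therefore suffices to show, for every $\sigma \in \Gamma$ and every $\Phi \in \mathcal{S}^*(M)$, that the two sides of each identity take the same value at $\sigma$ after the Fock transform is applied.

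The key elementary fact I would isolate first is the additivity of the weighted counting function $\#_u(\sigma)=\sum_{k=0}^{\infty}\mathbf{1}_{\sigma}(k)u(k)$. For $k\notin\sigma$ one has $\#_u(\sigma\cup k)=\#_u(\sigma)+u(k)$, and for $k\in\sigma$ one has $\#_u(\sigma\setminus k)=\#_u(\sigma)-u(k)$. These two relations are exactly what converts the defining formula (\ref{eq-2D-weight-induced-by-1D}) for $\mathfrak{N}_u$ into the stated commutators, and I would state them explicitly before beginning the computations.

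For the first identity I would compute both sides using (\ref{eq-annihilation}) and (\ref{eq-2D-weight-induced-by-1D}): on one hand $\widehat{\mathfrak{N}_u\mathfrak{a}_k\Phi}(\sigma) = \#_u(\sigma)[1-\mathbf{1}_{\sigma}(k)]\widehat{\Phi}(\sigma\cup k)$, and on the other hand $\widehat{\mathfrak{a}_k\mathfrak{N}_u\Phi}(\sigma) = [1-\mathbf{1}_{\sigma}(k)]\#_u(\sigma\cup k)\widehat{\Phi}(\sigma\cup k)$. On the support of the factor $[1-\mathbf{1}_{\sigma}(k)]$ we have $k\notin\sigma$, so the additivity relation replaces $\#_u(\sigma\cup k)$ by $\#_u(\sigma)+u(k)$; subtracting $u(k)\widehat{\mathfrak{a}_k\Phi}(\sigma)$ then reproduces $\widehat{\mathfrak{N}_u\mathfrak{a}_k\Phi}(\sigma)$ exactly. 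The second identity is entirely parallel, using (\ref{eq-creation}) in place of (\ref{eq-annihilation}) and the subtraction relation $\#_u(\sigma\setminus k)=\#_u(\sigma)-u(k)$, which is valid precisely on the support of the factor $\mathbf{1}_{\sigma}(k)$.

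For the third identity I would invoke the already-established Fock transform $\widehat{\mathfrak{a}_k^{\dag}\mathfrak{a}_k\Phi}(\sigma)=\mathbf{1}_{\sigma}(k)\widehat{\Phi}(\sigma)$ from Proposition~\ref{prop-3-5}. Then both $\widehat{\mathfrak{N}_u\mathfrak{a}_k^{\dag}\mathfrak{a}_k\Phi}(\sigma)$ and $\widehat{\mathfrak{a}_k^{\dag}\mathfrak{a}_k\mathfrak{N}_u\Phi}(\sigma)$ reduce to $\#_u(\sigma)\mathbf{1}_{\sigma}(k)\widehat{\Phi}(\sigma)$, since both $\mathfrak{N}_u$ and $\mathfrak{a}_k^{\dag}\mathfrak{a}_k$ act diagonally on Fock transforms (by multiplication by $\#_u(\sigma)$ and by $\mathbf{1}_{\sigma}(k)$, respectively) and scalar multiplications commute. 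I expect no genuine obstacle; the only point requiring care is the bookkeeping of the indicator factors, so that the additivity and subtraction formulas for $\#_u$ are applied only where the membership condition ($k\notin\sigma$ or $k\in\sigma$) that makes them valid actually holds.
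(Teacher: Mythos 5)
Your proof is correct, and for the first two identities it is essentially the paper's own argument: compare Fock transforms of both sides using (\ref{eq-annihilation}), (\ref{eq-creation}) and (\ref{eq-2D-weight-induced-by-1D}), and apply the additivity relations $\#_u(\sigma\cup k)=\#_u(\sigma)+u(k)$ (valid for $k\notin\sigma$, i.e.\ on the support of $1-\mathbf{1}_{\sigma}(k)$) and $\#_u(\sigma\setminus k)=\#_u(\sigma)-u(k)$ (valid for $k\in\sigma$). The only place you diverge is the third identity: the paper deduces $\mathfrak{N}_u\mathfrak{a}_k^{\dag}\mathfrak{a}_k=\mathfrak{a}_k^{\dag}\mathfrak{a}_k\mathfrak{N}_u$ purely algebraically from the first two commutators, via
\begin{equation*}
\mathfrak{N}_u\mathfrak{a}_k^{\dag}\mathfrak{a}_k
=\mathfrak{a}_k^{\dag}\mathfrak{N}_u\mathfrak{a}_k+u(k)\mathfrak{a}_k^{\dag}\mathfrak{a}_k
=\mathfrak{a}_k^{\dag}\bigl(\mathfrak{a}_k\mathfrak{N}_u-u(k)\mathfrak{a}_k\bigr)+u(k)\mathfrak{a}_k^{\dag}\mathfrak{a}_k
=\mathfrak{a}_k^{\dag}\mathfrak{a}_k\mathfrak{N}_u,
\end{equation*}
whereas you argue directly that both $\mathfrak{N}_u$ and $\mathfrak{a}_k^{\dag}\mathfrak{a}_k$ act diagonally on Fock transforms (multiplication by $\#_u(\sigma)$ and $\mathbf{1}_{\sigma}(k)$, via Proposition~\ref{prop-3-5}) and hence commute. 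Both routes are valid; the algebraic one shows the third relation is a formal consequence of the first two (needing no further transform computation), while your diagonal argument is the one the paper itself uses for the 2D analogue in Theorem~\ref{thr-3-17}, and it generalizes immediately to show that $\mathfrak{a}_k^{\dag}\mathfrak{a}_k$ commutes with \emph{any} operator defined by a multiplier on Fock transforms.
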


\begin{proof}
Let $k\geq 0$. For all $\Phi \in \mathcal{S}^*(M)$ and $\sigma\in \Gamma$, by using properties of $\mathfrak{N}_u$ and $\mathfrak{a}_k$, we have
\begin{equation*}
  \widehat{(\mathfrak{a}_k\mathfrak{N}_u)\Phi}(\sigma)
   = \widehat{\mathfrak{a}_k(\mathfrak{N}_u\Phi)}(\sigma)
   = (1-\mathbf{1}_{\sigma}(k))\widehat{\mathfrak{N}_u\Phi}(\sigma\cup k)
   = (1-\mathbf{1}_{\sigma}(k))\#_u(\sigma\cup k)\widehat{\Phi}(\sigma\cup k),
\end{equation*}
which together with $(1-\mathbf{1}_{\sigma}(k))\#_u(\sigma\cup k) = (1-\mathbf{1}_{\sigma}(k))\#_u(\sigma) +  (1-\mathbf{1}_{\sigma}(k))u(k)$ gives
\begin{equation*}
\begin{split}
  \widehat{(\mathfrak{a}_k\mathfrak{N}_u)\Phi}(\sigma)
   &= (1-\mathbf{1}_{\sigma}(k))\#_u(\sigma)\widehat{\Phi}(\sigma\cup k) + (1-\mathbf{1}_{\sigma}(k))u(k)\widehat{\Phi}(\sigma\cup k)\\
   &= \#_u(\sigma)\widehat{\mathfrak{a}_k\Phi}(\sigma) + u(k)\widehat{\mathfrak{a}_k\Phi}(\sigma)\\
   &= \widehat{\mathfrak{N}_u(\mathfrak{a}_k\Phi)}(\sigma)+ u(k)\widehat{\mathfrak{a}_k\Phi}(\sigma)\\
   &= \widehat{(\mathfrak{N}_u\mathfrak{a}_k)\Phi}(\sigma)+ u(k)\widehat{\mathfrak{a}_k\Phi}(\sigma).
\end{split}
\end{equation*}
Thus $\mathfrak{a}_k\mathfrak{N}_u = \mathfrak{N}_u\mathfrak{a}_k + u(k)\mathfrak{a}_k$, which is equivalent to
$\mathfrak{N}_u\mathfrak{a}_k=\mathfrak{a}_k\mathfrak{N}_u -u(k)\mathfrak{a}_k$. Similarly, we can verify the second equality.
Finally, by using the first and second equalities, we get
\begin{equation*}
  \mathfrak{N}_u\mathfrak{a}_k^{\dag}\mathfrak{a}_k
   =\mathfrak{a}_k^{\dag}\mathfrak{N}_u\mathfrak{a}_k + u(k)\mathfrak{a}_k^{\dag}\mathfrak{a}_k
  = \mathfrak{a}_k^{\dag}(\mathfrak{a}_k\mathfrak{N}_u -u(k)\mathfrak{a}_k) + u(k)\mathfrak{a}_k^{\dag}\mathfrak{a}_k
  = \mathfrak{a}_k^{\dag}\mathfrak{a}_k\mathfrak{N}_u.
\end{equation*}
This completes the proof.
\end{proof}

Comparing Theorem~\ref{thr-1D-commutation} and Lemma~\ref{lem-4-3}, one can see that the commutation relations between $\mathfrak{N}_u$
and $\mathfrak{a}_k$ (respectively, $\mathfrak{a}_k^{\dag}$) are quite similar to those between $N_u$ and $\partial_k$ (respectively, $\partial_k^*$).
However, the commutation relations between $\mathfrak{N}_u$ and $\mathfrak{a}_k$ (respectively, $\mathfrak{a}_k^{\dag}$) hold true on the whole space $\mathcal{S}^*(M)$,
while the commutation relations between between $N_u$ and $\partial_k$ (respectively, $\partial_k^*$) hold true only on $\mathrm{Dom}\, N_u$,
which is a dense subspace of $\mathcal{L}^2(M)$.

\begin{corollary}
The generalized number operator $\mathfrak{N}$ admits the following commutation relations
\begin{equation}\label{eq}
  \mathfrak{N}\,\mathfrak{a}_k=\mathfrak{a}_k\mathfrak{N} -\mathfrak{a}_k,\quad
   \mathfrak{N}\,\mathfrak{a}_k^{\dag}=\mathfrak{a}_k^{\dag}\mathfrak{N} + \mathfrak{a}_k^{\dag},\quad
   \mathfrak{N}\,\mathfrak{a}_k^{\dag}\mathfrak{a}_k= \mathfrak{a}_k^{\dag}\mathfrak{a}_k\mathfrak{N}.
\end{equation}
\end{corollary}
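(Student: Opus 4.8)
The plan is to obtain the three commutation relations as a direct specialization of Theorem~\ref{thr-1D-commutation}. First I would recall from the proof of Corollary~\ref{coro-3-10} (and as recorded in Remark~\ref{rem-3-3}) that the generalized number operator $\mathfrak{N}$ is nothing but the $1$D-GWN operator $\mathfrak{N}_{u_0}$ associated with the constant function $u_0$ on $\mathbb{N}$ given by $u_0(j)=1$ for all $j\in\mathbb{N}$. Consequently, every identity established for a general $1$D-GWN operator $\mathfrak{N}_u$ applies verbatim to $\mathfrak{N}$ once one sets $u=u_0$.

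Next I would substitute $u=u_0$ into the three identities of Theorem~\ref{thr-1D-commutation}. Since $u_0(k)=1$ for every $k\geq 0$, each coefficient $u(k)$ appearing there equals $1$, so the first two relations become $\mathfrak{N}\mathfrak{a}_k=\mathfrak{a}_k\mathfrak{N}-\mathfrak{a}_k$ and $\mathfrak{N}\mathfrak{a}_k^{\dag}=\mathfrak{a}_k^{\dag}\mathfrak{N}+\mathfrak{a}_k^{\dag}$, while the third relation is unaffected by the value of $u(k)$ and reads $\mathfrak{N}\mathfrak{a}_k^{\dag}\mathfrak{a}_k=\mathfrak{a}_k^{\dag}\mathfrak{a}_k\mathfrak{N}$. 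These are precisely the three asserted identities, and, inheriting the scope of Theorem~\ref{thr-1D-commutation}, they hold for all $k\geq 0$ on the whole space $\mathcal{S}^*(M)$.

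Because the argument is a pure specialization, I do not anticipate any genuine obstacle; the only point to confirm is the identification $\mathfrak{N}=\mathfrak{N}_{u_0}$, which is already in hand. If one instead wanted a self-contained derivation that bypasses Theorem~\ref{thr-1D-commutation}, one could repeat its Fock-transform computation with $\#_{u_0}(\sigma)=\#(\sigma)$ in place of $\#_u(\sigma)$, starting from the defining relation $\widehat{\mathfrak{N}\Phi}(\sigma)=\#(\sigma)\widehat{\Phi}(\sigma)$ of Corollary~\ref{coro-3-10} and using the elementary splitting $(1-\mathbf{1}_{\sigma}(k))\#(\sigma\cup k)=(1-\mathbf{1}_{\sigma}(k))\#(\sigma)+(1-\mathbf{1}_{\sigma}(k))$; but invoking Theorem~\ref{thr-1D-commutation} directly is the cleaner route, and it is the one I would take.
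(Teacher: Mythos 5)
Your proposal is correct and matches the paper's intent exactly: the paper states this corollary without proof precisely because it is the immediate specialization of Theorem~\ref{thr-1D-commutation} to $u=u_0\equiv 1$, using the identification $\mathfrak{N}=\mathfrak{N}_{u_0}$ from the proof of Corollary~\ref{coro-3-10}. Nothing further is needed.
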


As is seen, there are relatively simple commutation relations between 1D-GWN operators and
generalized annihilation (creation) operators. Next, we turn our attention to the case of 2D-GWN operators.

\begin{proposition}\label{prop-3-12}
Let $w$ be a 2D-weight. Then, for all $k\geq 0$ and $\sigma\in \Gamma$, it holds true that
\begin{equation}\label{eq}
  (1-\mathbf{1}_{\sigma}(k))\vartheta_w(\sigma\cup k)
   = (1-\mathbf{1}_{\sigma}(k))\Big[\vartheta_w(\sigma)-\#_{w(k,\cdot)}(\sigma)-\#_{w(\cdot,k)}(\sigma) + \sum_{j=0}^{\infty}w(j,k)\Big],
\end{equation}
where $\#_{w(k,\cdot)}(\sigma)= \sum_{n=0}^{\infty}\mathbf{1}_{\sigma}(n)w(k,n)$ and $\#_{w(\cdot,k)}(\sigma)= \sum_{j=0}^{\infty}\mathbf{1}_{\sigma}(j)w(j,k)$.
\end{proposition}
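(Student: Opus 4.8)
The plan is to prove the identity by directly computing the spectral function $\vartheta_w(\sigma\cup k)$ for a fixed set $\sigma$ and index $k$ satisfying $\mathbf{1}_{\sigma}(k)=0$ (so that $k\notin\sigma$), and then comparing term-by-term with $\vartheta_w(\sigma)$. Since the factor $(1-\mathbf{1}_{\sigma}(k))$ appears on both sides, the identity is trivially true when $k\in\sigma$ (both sides vanish), so I only need to establish the bracketed equality under the assumption $k\notin\sigma$. Writing $\tau=\sigma\cup k$, I would use the defining formula
\begin{equation*}
\vartheta_w(\tau)= \sum_{j=0}^{\infty}\mathbf{1}_{\tau}(j)w(j,j) + \sum_{j,n=0}^{\infty}\big(1-\mathbf{1}_{\tau}(j)\big)\mathbf{1}_{\tau}(n)w(j,n)
\end{equation*}
and exploit the elementary relation $\mathbf{1}_{\sigma\cup k}(n)=\mathbf{1}_{\sigma}(n)+\mathbf{1}_{\{k\}}(n)$, valid precisely because $k\notin\sigma$.

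First I would handle the diagonal sum $\sum_{j}\mathbf{1}_{\tau}(j)w(j,j)$, which simply gains one extra term $w(k,k)$ relative to $\sum_{j}\mathbf{1}_{\sigma}(j)w(j,j)$. Next, and this is the bulk of the bookkeeping, I would expand the off-diagonal double sum using both $\mathbf{1}_{\tau}(j)=\mathbf{1}_{\sigma}(j)+\mathbf{1}_{\{k\}}(j)$ (which forces $1-\mathbf{1}_{\tau}(j)=(1-\mathbf{1}_{\sigma}(j))-\mathbf{1}_{\{k\}}(j)$) and $\mathbf{1}_{\tau}(n)=\mathbf{1}_{\sigma}(n)+\mathbf{1}_{\{k\}}(n)$. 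Multiplying these out produces the $\sigma$-only double sum plus correction terms: one in which the summand $n$ is pinned to $k$ (contributing $\sum_{j}(1-\mathbf{1}_{\sigma}(j))w(j,k)$, up to the diagonal adjustment at $j=k$), one in which $j$ is pinned to $k$ (contributing $-\sum_{n}\mathbf{1}_{\sigma}(n)w(k,n)$, since the factor $-\mathbf{1}_{\{k\}}(j)$ carries the minus sign), and a pure corner term at $(j,n)=(k,k)$. These correction terms must be reassembled into $-\#_{w(k,\cdot)}(\sigma)-\#_{w(\cdot,k)}(\sigma)+\sum_{j}w(j,k)$.

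The step I expect to be the main obstacle is the careful reconciliation of the diagonal versus off-diagonal contributions at the index $k$: the term $w(k,k)$ enters both the diagonal sum and, as the corner $(k,k)$, the off-diagonal sum, and the definition of $\vartheta_w$ already treats the diagonal separately, so I must track exactly once where the $j=k$ contribution to $\#_{w(\cdot,k)}(\sigma)$ is cancelled or absorbed. Concretely, the quantity $\sum_{j}(1-\mathbf{1}_{\sigma}(j))w(j,k)$ arising from pinning $n=k$ includes the $j=k$ summand $w(k,k)$ (since $k\notin\sigma$), and I would need to verify that combining this with the extra diagonal term $w(k,k)$ and the corner term yields exactly $\sum_{j=0}^{\infty}w(j,k)$ rather than a value off by $w(k,k)$. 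Using the identity $\sum_{j}w(j,k)=\sum_{j}\mathbf{1}_{\sigma}(j)w(j,k)+\sum_{j}(1-\mathbf{1}_{\sigma}(j))w(j,k)=\#_{w(\cdot,k)}(\sigma)+\sum_{j}(1-\mathbf{1}_{\sigma}(j))w(j,k)$, the correction terms should collapse cleanly into the claimed right-hand side. Throughout, the 2D-weight condition $\alpha_w<\infty$ from Definition~\ref{def-3-1} guarantees that every one of these rearranged series converges absolutely, so the term-by-term manipulations are legitimate and the interchange of summation order causes no difficulty.
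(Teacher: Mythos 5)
Your proposal is correct and takes essentially the same route as the paper's proof: a direct expansion of $\vartheta_w(\sigma\cup k)$ via the indicator decomposition $\mathbf{1}_{\sigma\cup k}=\mathbf{1}_{\sigma}+\mathbf{1}_{\{k\}}$ (valid when $k\notin\sigma$), splitting the diagonal and off-diagonal sums and tracking the cancellation of the two $w(k,k)$ contributions, exactly as the paper does with the prefactor $(1-\mathbf{1}_{\sigma}(k))$ carried along instead of an explicit case split. Your added remark on absolute convergence justifying the rearrangements is a minor refinement the paper leaves implicit.
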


\begin{proof}
Let $k\geq 0$ and $\sigma\in \Gamma$. Then, by careful calculations, we find
\begin{equation*}
  (1-\mathbf{1}_{\sigma}(k))\sum_{j=0}^{\infty} \mathbf{1}_{\sigma\cup k}(j)w(j,j)
= (1-\mathbf{1}_{\sigma}(k))\Big[\sum_{j=0}^{\infty}\mathbf{1}_{\sigma}(j)w(j,j) + w(k,k)\Big]
\end{equation*}
and
\begin{equation*}
\begin{split}
 (1-&\mathbf{1}_{\sigma}(k))\sum_{j,n=0}^{\infty}(1-\mathbf{1}_{\sigma\cup k}(j))\mathbf{1}_{\sigma\cup k}(n)w(j,n)\\
  & = (1-\mathbf{1}_{\sigma}(k))\Big[\sum_{j,n=0}^{\infty} (1-\mathbf{1}_{\sigma}(j))\mathbf{1}_{\sigma}(n)w(j,n)
        - \sum_{n=0}^{\infty}\mathbf{1}_{\sigma}(n)w(k,n)\\
 &\qquad  +\sum_{j=0}^{\infty}w(j,k) - \sum_{j=0}^{\infty}\mathbf{1}_{\sigma}(j)w(j,k) - w(k,k)\Big]\\
 & = (1-\mathbf{1}_{\sigma}(k))\Big[\sum_{j,n=0}^{\infty} (1-\mathbf{1}_{\sigma}(j))\mathbf{1}_{\sigma}(n)w(j,n)
       - \#_{w(k,\cdot)}(\sigma) - \#_{w(\cdot,k)}(\sigma) - w(k,k)+\sum_{j=0}^{\infty}w(j,k)\Big].
\end{split}
\end{equation*}
It then follows from these two equalities that
\begin{equation*}
\begin{split}
  (1-\mathbf{1}_{\sigma}(k))\vartheta_w(\sigma\cup k)
   &= (1-\mathbf{1}_{\sigma}(k))\Big[\sum_{j=0}^{\infty} \mathbf{1}_{\sigma\cup k}(j)w(j,j)
         + \sum_{j,n=0}^{\infty}(1-\mathbf{1}_{\sigma\cup k}(j))\mathbf{1}_{\sigma\cup k}(n)w(j,n)\Big]\\
   & = (1-\mathbf{1}_{\sigma}(k))\Big[\vartheta_w(\sigma)-\#_{w(k,\cdot)}(\sigma)-\#_{w(\cdot,k)}(\sigma) + \sum_{j=0}^{\infty}w(j,k)\Big]
\end{split}
\end{equation*}
This completes the proof.
\end{proof}

\begin{theorem}\label{thr-2D-commutation-annihilation}
Let $w$ be a 2D-weight and $\mathfrak{K}_w$ the 2D-GWN operator associated with $w$. Then, for all $k\geq 0$, it holds true that
\begin{equation}\label{eq-2D-commutation-annihilation}
  \mathfrak{K}_w\mathfrak{a}_k = \mathfrak{a}_k \mathfrak{K}_w + \mathfrak{a}_k \mathfrak{N}_{w(k,\cdot)}
      + \mathfrak{a}_k\mathfrak{N}_{w(\cdot,k)} -\Big[2w(k,k) + \sum_{j=0}^{\infty}w(j,k)\Big]\mathfrak{a}_k.
\end{equation}
\end{theorem}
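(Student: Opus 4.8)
The plan is to verify the operator identity (\ref{eq-2D-commutation-annihilation}) at the level of Fock transforms. Since generalized functionals of $M$ are completely determined by their Fock transforms, it suffices to fix $k\geq 0$, $\Phi\in\mathcal{S}^*(M)$ and $\sigma\in\Gamma$, and to check that the Fock transforms of the two sides of (\ref{eq-2D-commutation-annihilation}), evaluated at $\sigma$, coincide. As a preliminary remark, the two 1D-GWN operators $\mathfrak{N}_{w(k,\cdot)}$ and $\mathfrak{N}_{w(\cdot,k)}$ are well defined, because $w(k,n)\leq \alpha_w$ and $\sum_{j}w(j,k)\leq\alpha_w$ show that $w(k,\cdot)$ and $w(\cdot,k)$ are bounded nonnegative functions on $\mathbb{N}$.

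First I would compute the left-hand side. Applying (\ref{basic-formula-1}) and then (\ref{eq-annihilation}) gives
\begin{equation*}
  \widehat{\mathfrak{K}_w\mathfrak{a}_k\Phi}(\sigma)
  = \vartheta_w(\sigma)\widehat{\mathfrak{a}_k\Phi}(\sigma)
  = (1-\mathbf{1}_{\sigma}(k))\vartheta_w(\sigma)\widehat{\Phi}(\sigma\cup k).
\end{equation*}
Next I would compute the Fock transform of each of the four summands on the right-hand side. In every summand the operator $\mathfrak{a}_k$ is applied last, so by (\ref{eq-annihilation}) each summand carries the factor $(1-\mathbf{1}_{\sigma}(k))$ together with an evaluation at $\sigma\cup k$. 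The key step is to re-express the quantities at $\sigma\cup k$ in terms of quantities at $\sigma$. For the two number-type summands I would use that, on the set where $(1-\mathbf{1}_{\sigma}(k))=1$, i.e. $k\notin\sigma$, one has $\#_u(\sigma\cup k)=\#_u(\sigma)+u(k)$; taking $u=w(k,\cdot)$ and $u=w(\cdot,k)$ this yields $(1-\mathbf{1}_{\sigma}(k))\#_{w(k,\cdot)}(\sigma\cup k)=(1-\mathbf{1}_{\sigma}(k))[\#_{w(k,\cdot)}(\sigma)+w(k,k)]$ and the analogous identity for $w(\cdot,k)$.

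Collecting the four contributions, the Fock transform of the right-hand side equals $(1-\mathbf{1}_{\sigma}(k))\widehat{\Phi}(\sigma\cup k)$ multiplied by
\begin{equation*}
  \vartheta_w(\sigma\cup k)+\#_{w(k,\cdot)}(\sigma)+\#_{w(\cdot,k)}(\sigma)-\sum_{j=0}^{\infty}w(j,k),
\end{equation*}
where the two copies of $w(k,k)$ produced by the number-type summands are absorbed by the term $2w(k,k)$ inside the bracketed coefficient of the last summand. Finally I would invoke Proposition~\ref{prop-3-12} to replace $(1-\mathbf{1}_{\sigma}(k))\vartheta_w(\sigma\cup k)$ by $(1-\mathbf{1}_{\sigma}(k))[\vartheta_w(\sigma)-\#_{w(k,\cdot)}(\sigma)-\#_{w(\cdot,k)}(\sigma)+\sum_{j}w(j,k)]$; the three correction terms then cancel exactly against the three surviving terms above, leaving $(1-\mathbf{1}_{\sigma}(k))\vartheta_w(\sigma)\widehat{\Phi}(\sigma\cup k)$, which is precisely the left-hand side. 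As $\sigma$ and $\Phi$ are arbitrary, the identity (\ref{eq-2D-commutation-annihilation}) follows.

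The main obstacle is not analytic but combinatorial bookkeeping: one must track carefully the $w(k,k)$ terms that appear when $\mathfrak{a}_k$ enlarges $\sigma$ to $\sigma\cup k$, and confirm that the constant $2w(k,k)+\sum_{j}w(j,k)$ in the last summand is exactly what is needed to make those contributions cancel so that Proposition~\ref{prop-3-12} applies verbatim. No convergence or continuity questions intervene, since $\mathfrak{K}_w$, $\mathfrak{N}_{w(k,\cdot)}$, $\mathfrak{N}_{w(\cdot,k)}$ and $\mathfrak{a}_k$ are already known to be continuous linear operators on $\mathcal{S}^*(M)$, and the whole verification is pointwise in the variable $\sigma\in\Gamma$.
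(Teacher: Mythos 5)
Your proposal is correct, and it rests on the same two pillars as the paper's own proof: pointwise verification via Fock transforms and the combinatorial identity of Proposition~\ref{prop-3-12}. The organization differs in one respect. The paper starts from $\widehat{\mathfrak{a}_k\mathfrak{K}_w\Phi}(\sigma)$, applies Proposition~\ref{prop-3-12}, and first obtains the intermediate operator identity $\mathfrak{a}_k\mathfrak{K}_w = \mathfrak{K}_w\mathfrak{a}_k - \mathfrak{N}_{w(k,\cdot)}\mathfrak{a}_k - \mathfrak{N}_{w(\cdot,k)}\mathfrak{a}_k + \big[\sum_{j=0}^{\infty}w(j,k)\big]\mathfrak{a}_k$, in which the 1D-GWN operators stand to the \emph{left} of $\mathfrak{a}_k$; it then invokes the already-proved commutation relations of Theorem~\ref{thr-1D-commutation} to move them to the right, and that second step is where the constant $2w(k,k)$ enters. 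You instead evaluate the right-hand side of (\ref{eq-2D-commutation-annihilation}) exactly as written, handling the compositions $\mathfrak{a}_k\mathfrak{N}_{w(k,\cdot)}$ and $\mathfrak{a}_k\mathfrak{N}_{w(\cdot,k)}$ by the elementary additivity $\#_u(\sigma\cup k)=\#_u(\sigma)+u(k)$ for $k\notin\sigma$, which is precisely the fact underlying Theorem~\ref{thr-1D-commutation}, so the $2w(k,k)$ cancellation is done inline rather than quoted. The two routes are mathematically equivalent and both are complete: yours is marginally more self-contained (it never cites Theorem~\ref{thr-1D-commutation}), while the paper's reuses proven machinery and makes the provenance of the coefficient $2w(k,k)+\sum_{j=0}^{\infty}w(j,k)$ transparent as a by-product of the 1D commutation relations. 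Your preliminary observations (that $w(k,\cdot)$ and $w(\cdot,k)$ are bounded, so the 1D-GWN operators exist, and that no convergence issues arise because all operators involved are continuous on $\mathcal{S}^*(M)$) are also correct and worth stating explicitly, as the paper leaves them implicit.
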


\begin{proof}
Let $k\geq 0$. Then, for all $\Phi\in \mathcal{S}^*(M)$ and $\sigma\in \Gamma$,
using properties of the involved operators, we have
\begin{equation*}
  \widehat{\mathfrak{a}_k\mathfrak{K}_w\Phi}(\sigma)
= (1-\mathbf{1}_{\sigma}(k))\widehat{\mathfrak{K}_w\Phi}(\sigma\cup k)
= (1-\mathbf{1}_{\sigma}(k)) \vartheta_w(\sigma\cup k)\widehat{\Phi}(\sigma\cup k),
\end{equation*}
which together with Proposition~\ref{prop-3-12} gives
\begin{equation*}
\begin{split}
  \widehat{\mathfrak{a}_k\mathfrak{K}_w\Phi}(\sigma)
 &= (1-\mathbf{1}_{\sigma}(k))\Big[\vartheta_w(\sigma)-\#_{w(k,\cdot)}(\sigma)-\#_{w(\cdot,k)}(\sigma) + \sum_{j=0}^{\infty}w(j,k)\Big]\widehat{\Phi}(\sigma\cup k)\\
 &= \Big[\vartheta_w(\sigma)-\#_{w(k,\cdot)}(\sigma)-\#_{w(\cdot,k)}(\sigma) + \sum_{j=0}^{\infty}w(j,k)\Big]\widehat{\mathfrak{a}_k\Phi}(\sigma)\\
 &= \widehat{\mathfrak{K}_w\mathfrak{a}_k\Phi}(\sigma)-\widehat{\mathfrak{N}_{w(k,\cdot)}\mathfrak{a}_k\Phi}(\sigma)
    -\widehat{\mathfrak{N}_{w(\cdot,k)}\mathfrak{a}_k\Phi}(\sigma) + \Big[\sum_{j=0}^{\infty}w(j,k)\Big]\widehat{\mathfrak{a}_k\Phi}(\sigma).
\end{split}
\end{equation*}
Thus, by the arbitrariness of $\Phi$ and $\sigma$ in the above equality, we come to the following equality
\begin{equation*}
  \mathfrak{a}_k\mathfrak{K}_w
   = \mathfrak{K}_w\mathfrak{a}_k -\mathfrak{N}_{w(k,\cdot)}\mathfrak{a}_k - \mathfrak{N}_{w(\cdot,k)}\mathfrak{a}_k + \Big[\sum_{j=0}^{\infty}w(j,k)\Big]\mathfrak{a}_k,
\end{equation*}
which, together with $\mathfrak{N}_{w(k,\cdot)}\mathfrak{a}_k = \mathfrak{a}_k\mathfrak{N}_{w(k,\cdot)}- w(k,k)\mathfrak{a}_k$
and $\mathfrak{N}_{w(\cdot,k)}\mathfrak{a}_k = \mathfrak{a}_k\mathfrak{N}_{w(\cdot,k)}-w(k,k)\mathfrak{a}_k$,
(see Theorem~\ref{thr-1D-commutation}),
yields
\begin{equation*}
\mathfrak{a}_k\mathfrak{K}_w
   = \mathfrak{K}_w\mathfrak{a}_k -\mathfrak{a}_k\mathfrak{N}_{w(k,\cdot)}
   - \mathfrak{a}_k\mathfrak{N}_{w(\cdot,k)} + \Big[2w(k,k)+ \sum_{j=0}^{\infty}w(j,k)\Big]\mathfrak{a}_k,
\end{equation*}
which implies (\ref{eq-2D-commutation-annihilation}).
\end{proof}

\begin{proposition}\label{prop-3-14}
Let $w$ be a 2D-weight. Then, for all $k\geq 0$ and $\sigma\in \Gamma$, it holds true that
\begin{equation}\label{eq}
  \mathbf{1}_{\sigma}(k)\vartheta_w(\sigma\setminus k)
   = \mathbf{1}_{\sigma}(k)\Big[\vartheta_w(\sigma)+\#_{w(k,\cdot)}(\sigma)+\#_{w(\cdot,k)}(\sigma) - 2w(k,k)- \sum_{j=0}^{\infty}w(j,k)\Big],
\end{equation}
where $\#_{w(k,\cdot)}(\sigma)= \sum_{n=0}^{\infty}\mathbf{1}_{\sigma}(n)w(k,n)$ and $\#_{w(\cdot,k)}(\sigma)= \sum_{j=0}^{\infty}\mathbf{1}_{\sigma}(j)w(j,k)$.
\end{proposition}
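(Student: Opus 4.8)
The plan is to deduce this identity directly from Proposition~\ref{prop-3-12} by exploiting the substitution that interchanges the roles of $\sigma$ and $\sigma\setminus k$. Since both sides of the asserted equality carry the prefactor $\mathbf{1}_{\sigma}(k)$, the identity holds trivially whenever $k\notin\sigma$, so I would only need to establish it in the case $k\in\sigma$.

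First I would set $\tau=\sigma\setminus k$, so that $k\notin\tau$ and $\sigma=\tau\cup k$. Because $k\notin\tau$, the prefactor $(1-\mathbf{1}_{\tau}(k))$ appearing in Proposition~\ref{prop-3-12} equals $1$, and that proposition applied to $\tau$ gives
\[
\vartheta_w(\tau\cup k)=\vartheta_w(\tau)-\#_{w(k,\cdot)}(\tau)-\#_{w(\cdot,k)}(\tau)+\sum_{j=0}^{\infty}w(j,k).
\]
Next I would rewrite the two counting terms back in terms of $\sigma$. Since $\sigma=\tau\cup k$ with $k\notin\tau$, splitting off the index equal to $k$ yields $\#_{w(k,\cdot)}(\sigma)=\#_{w(k,\cdot)}(\tau)+w(k,k)$ and $\#_{w(\cdot,k)}(\sigma)=\#_{w(\cdot,k)}(\tau)+w(k,k)$. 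Substituting these relations into the displayed formula and solving for $\vartheta_w(\tau)=\vartheta_w(\sigma\setminus k)$ produces
\[
\vartheta_w(\sigma\setminus k)=\vartheta_w(\sigma)+\#_{w(k,\cdot)}(\sigma)+\#_{w(\cdot,k)}(\sigma)-2w(k,k)-\sum_{j=0}^{\infty}w(j,k).
\]
Multiplying through by $\mathbf{1}_{\sigma}(k)$ then restores exactly the stated form and completes the argument.

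The bookkeeping is routine, and the single point requiring care is the handling of the diagonal term $w(k,k)$: it is precisely its double occurrence, once inside $\#_{w(k,\cdot)}$ and once inside $\#_{w(\cdot,k)}$, that accounts for the coefficient $-2w(k,k)$ in the final expression. The sign reversal relative to Proposition~\ref{prop-3-12} (the counting terms now enter with a plus and $\sum_j w(j,k)$ with a minus) is the expected signature of inverting the operation of adjoining $k$ by that of deleting it.

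Alternatively, I could prove the statement directly, mirroring the calculation in the proof of Proposition~\ref{prop-3-12}: one expands $\mathbf{1}_{\sigma}(k)\vartheta_w(\sigma\setminus k)$ through the two defining sums of $\vartheta_w$ and compares term by term with $\vartheta_w(\sigma)$, tracking how removing $k$ from $\sigma$ changes the indicators $\mathbf{1}_{\sigma\setminus k}(j)$. I would favor the reduction above, however, since it reuses the combinatorial identity already established rather than repeating the same index-splitting computation.
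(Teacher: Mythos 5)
Your proof is correct, and it takes a genuinely different (and more economical) route than the paper. The paper's own proof of Proposition~\ref{prop-3-14} is omitted: it merely states that the argument is ``much similar to that of Proposition~\ref{prop-3-12}'', i.e.\ it intends the direct index-splitting computation that you describe only as your fallback alternative --- expanding $\mathbf{1}_{\sigma}(k)\vartheta_w(\sigma\setminus k)$ through the two defining sums of $\vartheta_w$ and tracking how the indicators change when $k$ is deleted. Your primary argument instead deduces the identity formally from Proposition~\ref{prop-3-12}: both sides vanish when $k\notin\sigma$; when $k\in\sigma$ you apply Proposition~\ref{prop-3-12} to $\tau=\sigma\setminus k$ (where its prefactor equals $1$), use the splittings $\#_{w(k,\cdot)}(\sigma)=\#_{w(k,\cdot)}(\tau)+w(k,k)$ and $\#_{w(\cdot,k)}(\sigma)=\#_{w(\cdot,k)}(\tau)+w(k,k)$ (valid precisely because $k\in\sigma$), and solve for $\vartheta_w(\sigma\setminus k)$. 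The algebra checks out, including the provenance of the $-2w(k,k)$ term, and all quantities being rearranged are finite (the $\#$-sums are finite since $\sigma$ is a finite set, and $\sum_{j=0}^{\infty}w(j,k)<\infty$ by the 2D-weight condition), so the manipulations are legitimate; there is also no circularity, since Proposition~\ref{prop-3-12} is proved independently and earlier in the paper. What your reduction buys is brevity and structural insight: it exhibits the two propositions as equivalent statements related by the mutually inverse operations of adjoining and deleting $k$, rather than establishing each by running the same index computation twice. What the paper's intended direct computation buys is only the self-containedness of the individual proposition.
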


\begin{proof}
The proof is much similar to that of Proposition~\ref{prop-3-12}. Here, we omit it for brevity.
\end{proof}

\begin{theorem}\label{thr-2D-commutation-creation}
Let $w$ be a 2D-weight and $\mathfrak{K}_w$ the 2D-GWN operator associated with $w$. Then, for all $k\geq 0$, it holds true that
\begin{equation}\label{eq-2D-commutation-creation}
  \mathfrak{K}_w\mathfrak{a}_k^{\dag} = \mathfrak{a}_k^{\dag} \mathfrak{K}_w - \mathfrak{a}_k^{\dag} \mathfrak{N}_{w(k,\cdot)}
      - \mathfrak{a}_k^{\dag}\mathfrak{N}_{w(\cdot,k)} + \Big[\sum_{j=0}^{\infty}w(j,k)\Big]\mathfrak{a}_k^{\dag}.
\end{equation}
\end{theorem}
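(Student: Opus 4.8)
The plan is to mirror the proof of Theorem~\ref{thr-2D-commutation-annihilation}, with the annihilation identity of Proposition~\ref{prop-3-12} replaced by the creation identity of Proposition~\ref{prop-3-14}. Since a generalized functional is completely determined by its Fock transform, it suffices to fix $k\geq 0$ and to verify, for every $\Phi\in\mathcal{S}^*(M)$ and every $\sigma\in\Gamma$, an identity between the Fock transforms of the two sides of (\ref{eq-2D-commutation-creation}). The one genuine idea is to expand $\mathfrak{a}_k^{\dag}\mathfrak{K}_w$ rather than $\mathfrak{K}_w\mathfrak{a}_k^{\dag}$: applying $\mathfrak{K}_w$ first and the creation operator afterwards forces $\vartheta_w$ to be evaluated at $\sigma\setminus k$ together with the factor $\mathbf{1}_{\sigma}(k)$, which is precisely the shape treated by Proposition~\ref{prop-3-14}.

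First I would compute, using (\ref{eq-creation}) and Theorem~\ref{basic-theorem-1},
\[
\widehat{\mathfrak{a}_k^{\dag}\mathfrak{K}_w\Phi}(\sigma)=\mathbf{1}_{\sigma}(k)\widehat{\mathfrak{K}_w\Phi}(\sigma\setminus k)=\mathbf{1}_{\sigma}(k)\vartheta_w(\sigma\setminus k)\widehat{\Phi}(\sigma\setminus k).
\]
Substituting the expression for $\mathbf{1}_{\sigma}(k)\vartheta_w(\sigma\setminus k)$ furnished by Proposition~\ref{prop-3-14}, and then recognizing each resulting term through the defining relation $\widehat{\mathfrak{a}_k^{\dag}\Phi}(\sigma)=\mathbf{1}_{\sigma}(k)\widehat{\Phi}(\sigma\setminus k)$ together with the spectral characterizations of $\mathfrak{K}_w$, $\mathfrak{N}_{w(k,\cdot)}$ and $\mathfrak{N}_{w(\cdot,k)}$, I would read off
\[
\widehat{\mathfrak{a}_k^{\dag}\mathfrak{K}_w\Phi}(\sigma)=\widehat{\mathfrak{K}_w\mathfrak{a}_k^{\dag}\Phi}(\sigma)+\widehat{\mathfrak{N}_{w(k,\cdot)}\mathfrak{a}_k^{\dag}\Phi}(\sigma)+\widehat{\mathfrak{N}_{w(\cdot,k)}\mathfrak{a}_k^{\dag}\Phi}(\sigma)-\Big[2w(k,k)+\sum_{j=0}^{\infty}w(j,k)\Big]\widehat{\mathfrak{a}_k^{\dag}\Phi}(\sigma).
\]
By the arbitrariness of $\Phi$ and $\sigma$, this gives the operator identity
\[
\mathfrak{K}_w\mathfrak{a}_k^{\dag}=\mathfrak{a}_k^{\dag}\mathfrak{K}_w-\mathfrak{N}_{w(k,\cdot)}\mathfrak{a}_k^{\dag}-\mathfrak{N}_{w(\cdot,k)}\mathfrak{a}_k^{\dag}+\Big[2w(k,k)+\sum_{j=0}^{\infty}w(j,k)\Big]\mathfrak{a}_k^{\dag},
\]
in which the two $1$D-GWN operators still sit on the \emph{left} of $\mathfrak{a}_k^{\dag}$.

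The final step is to commute $\mathfrak{N}_{w(k,\cdot)}$ and $\mathfrak{N}_{w(\cdot,k)}$ past $\mathfrak{a}_k^{\dag}$ by means of the second identity of Theorem~\ref{thr-1D-commutation}, namely $\mathfrak{N}_u\mathfrak{a}_k^{\dag}=\mathfrak{a}_k^{\dag}\mathfrak{N}_u+u(k)\mathfrak{a}_k^{\dag}$, applied with $u=w(k,\cdot)$ and with $u=w(\cdot,k)$. In both cases the relevant value is $u(k)=w(k,k)$, so this produces two extra terms $-w(k,k)\mathfrak{a}_k^{\dag}$ which exactly cancel against the $+2w(k,k)\mathfrak{a}_k^{\dag}$ already present, leaving precisely (\ref{eq-2D-commutation-creation}). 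I expect no serious obstacle beyond careful bookkeeping: the only nontrivial input, Proposition~\ref{prop-3-14}, is already available, and the single real decision is to expand $\mathfrak{a}_k^{\dag}\mathfrak{K}_w$ so that $\vartheta_w$ is evaluated at $\sigma\setminus k$; the main risk is a sign or cancellation slip while tracking the three $w(k,k)$ contributions.
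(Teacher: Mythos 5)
Your proposal is correct and follows essentially the same route as the paper's own proof: expand $\widehat{\mathfrak{a}_k^{\dag}\mathfrak{K}_w\Phi}(\sigma)$ so that Proposition~\ref{prop-3-14} applies, read off the operator identity by the arbitrariness of $\Phi$ and $\sigma$, and then use the creation-operator identity of Theorem~\ref{thr-1D-commutation} to move $\mathfrak{N}_{w(k,\cdot)}$ and $\mathfrak{N}_{w(\cdot,k)}$ to the right of $\mathfrak{a}_k^{\dag}$, with the two $w(k,k)$ terms cancelling against the $2w(k,k)$ term exactly as you describe.
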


\begin{proof}
Let $k\geq 0$. For all $\Phi\in \mathcal{S}^*(M)$ and $\sigma\in \Gamma$, using properties of the involved operators
as well as Proposition~\ref{prop-3-14}, we get
\begin{equation*}
\begin{split}
  \widehat{\mathfrak{a}_k^{\dag}\mathfrak{K}_w\Phi}(\sigma)
    &= \mathbf{1}_{\sigma}(k)\vartheta_w(\sigma\setminus k)\widehat{\Phi}(\sigma\setminus k)\\
    &= \mathbf{1}_{\sigma}(k)\Big[\vartheta_w(\sigma)+\#_{w(k,\cdot)}(\sigma)+\#_{w(\cdot,k)}(\sigma) - 2w(k,k)- \sum_{j=0}^{\infty}w(j,k)\Big]\widehat{\Phi}(\sigma\setminus k)\\
    &=\Big[\vartheta_w(\sigma)+\#_{w(k,\cdot)}(\sigma)+\#_{w(\cdot,k)}(\sigma) - 2w(k,k)- \sum_{j=0}^{\infty}w(j,k)\Big]\widehat{\mathfrak{a}_k^{\dag}\Phi}(\sigma)\\
    &=\widehat{\mathfrak{K}_w\mathfrak{a}_k^{\dag}\Phi}(\sigma) + \widehat{\mathfrak{N}_{w(k,\cdot)}\mathfrak{a}_k^{\dag}\Phi}(\sigma)
       + \widehat{\mathfrak{N}_{w(\cdot,k)}\mathfrak{a}_k^{\dag}\Phi}(\sigma) - \Big[2w(k,k)+ \sum_{j=0}^{\infty}w(j,k)\Big]\widehat{\mathfrak{a}_k^{\dag}\Phi}(\sigma).
\end{split}
\end{equation*}
Thus, by the arbitrariness of $\Phi$ and $\sigma$ in the above equality, we have
\begin{equation*}
\mathfrak{a}_k^{\dag}\mathfrak{K}_w = \mathfrak{K}_w\mathfrak{a}_k^{\dag} + \mathfrak{N}_{w(k,\cdot)}\mathfrak{a}_k^{\dag} + \mathfrak{N}_{w(\cdot,k)}\mathfrak{a}_k^{\dag}
- \Big[2w(k,k)+ \sum_{j=0}^{\infty}w(j,k)\Big]\mathfrak{a}_k^{\dag},
\end{equation*}
which, together with $\mathfrak{N}_{w(k,\cdot)}\mathfrak{a}_k^{\dag} = \mathfrak{a}_k^{\dag}\mathfrak{N}_{w(k,\cdot)} + w(k,k)\mathfrak{a}_k^{\dag}$
and $\mathfrak{N}_{w(\cdot,k)}\mathfrak{a}_k^{\dag} = \mathfrak{a}_k^{\dag}\mathfrak{N}_{w(\cdot,k)} + w(k,k)\mathfrak{a}_k^{\dag}$,
(see Theorem~\ref{thr-1D-commutation}), gives
\begin{equation*}
  \mathfrak{a}_k^{\dag}\mathfrak{K}_w = \mathfrak{K}_w\mathfrak{a}_k^{\dag} + \mathfrak{a}_k^{\dag} \mathfrak{N}_{w(k,\cdot)}
   + \mathfrak{a}_k^{\dag}\mathfrak{N}_{w(\cdot,k)}- \Big[\sum_{j=0}^{\infty}w(j,k)\Big]\mathfrak{a}_k^{\dag},
\end{equation*}
which is equivalent to (\ref{eq-2D-commutation-creation}).
\end{proof}

\begin{remark}
Again comparing Theorem~\ref{thr-2D-commutation-annihilation}, Theorem~\ref{thr-2D-commutation-creation} and Lemma~\ref{thr-CR-A},
one can find that the commutation relations between $\mathfrak{K}_w$ and $\mathfrak{a}_k$ (respectively, $\mathfrak{a}_k^{\dag}$)
are quite similar to those between $S_w$ and $\partial_k$ (respectively, $\partial_k^*$).
However, the commutation relations between $\mathfrak{K}_w$ and $\mathfrak{a}_k$ (respectively, $\mathfrak{a}_k^{\dag}$) hold true on the whole space $\mathcal{S}^*(M)$,
while the commutation relations between $S_w$ and $\partial_k$ (respectively, $\partial_k^*$) hold true only on $\mathrm{Dom}\,N$,
which is a proper subspace of $\mathcal{L}^2(M)$.
\end{remark}

\begin{theorem}\label{thr-3-17}
Let $w$ be a 2D-weight and $\mathfrak{K}_w$ the 2D-GWN operator associated with $w$. Then, for all $k\geq 0$, it holds true that
\begin{equation}\label{eq-3-30}
  \mathfrak{K}_w\mathfrak{a}_k^{\dag}\mathfrak{a}_k = \mathfrak{a}_k^{\dag}\mathfrak{a}_k \mathfrak{K}_w,
\end{equation}
namely $\mathfrak{K}_w$ commutes with $\mathfrak{a}_k^{\dag}\mathfrak{a}_k$.
\end{theorem}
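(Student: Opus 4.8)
The plan is to reduce the operator identity (\ref{eq-3-30}) to a pointwise identity between Fock transforms, exploiting the fact that both $\mathfrak{K}_w$ and $\mathfrak{a}_k^{\dag}\mathfrak{a}_k$ act as diagonal multiplication operators in the Fock-transform picture. Recall from Theorem~\ref{basic-theorem-1} that $\widehat{\mathfrak{K}_w\Phi}(\sigma)=\vartheta_w(\sigma)\widehat{\Phi}(\sigma)$, and from the computation inside the proof of Proposition~\ref{prop-3-5} that $\widehat{\mathfrak{a}_k^{\dag}\mathfrak{a}_k\Phi}(\sigma)=\mathbf{1}_{\sigma}(k)\widehat{\Phi}(\sigma)$. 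Thus each of the two operators merely multiplies the Fock transform by a scalar that depends only on $\sigma$.

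First I would fix $k\geq 0$, $\Phi\in\mathcal{S}^*(M)$ and $\sigma\in\Gamma$, and compute the Fock transform of each side. For the left-hand side, applying the two formulas in succession gives
\begin{equation*}
  \widehat{\mathfrak{K}_w\mathfrak{a}_k^{\dag}\mathfrak{a}_k\Phi}(\sigma)
  = \vartheta_w(\sigma)\,\widehat{\mathfrak{a}_k^{\dag}\mathfrak{a}_k\Phi}(\sigma)
  = \vartheta_w(\sigma)\,\mathbf{1}_{\sigma}(k)\,\widehat{\Phi}(\sigma),
\end{equation*}
while for the right-hand side the same two formulas yield
\begin{equation*}
  \widehat{\mathfrak{a}_k^{\dag}\mathfrak{a}_k\mathfrak{K}_w\Phi}(\sigma)
  = \mathbf{1}_{\sigma}(k)\,\widehat{\mathfrak{K}_w\Phi}(\sigma)
  = \mathbf{1}_{\sigma}(k)\,\vartheta_w(\sigma)\,\widehat{\Phi}(\sigma).
\end{equation*}
Since $\vartheta_w(\sigma)$ and $\mathbf{1}_{\sigma}(k)$ are ordinary scalars, these two expressions coincide for every $\sigma\in\Gamma$. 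I would then invoke the injectivity of the Fock transform (generalized functionals of $M$ are completely determined by their Fock transforms) to pass from equality of Fock transforms, valid for all $\sigma$, to the equality $\mathfrak{K}_w\mathfrak{a}_k^{\dag}\mathfrak{a}_k\Phi=\mathfrak{a}_k^{\dag}\mathfrak{a}_k\mathfrak{K}_w\Phi$; the arbitrariness of $\Phi\in\mathcal{S}^*(M)$ then delivers the operator identity (\ref{eq-3-30}).

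There is no genuine analytic obstacle here: the entire content is the observation that, under the Fock transform, both operators become multiplication by $\sigma$-dependent scalars, and scalar multiplications on a function space always commute. The only step requiring a little care is citing the correct formula for $\widehat{\mathfrak{a}_k^{\dag}\mathfrak{a}_k\Phi}$, which appears inside the proof of Proposition~\ref{prop-3-5} rather than as a separately displayed identity. Alternatively, one could derive (\ref{eq-3-30}) purely algebraically by composing the commutation relations of Theorem~\ref{thr-2D-commutation-annihilation} and Theorem~\ref{thr-2D-commutation-creation} together with the $\mathfrak{N}_u$-commutation relations of Theorem~\ref{thr-1D-commutation}, exactly as the third identity of Theorem~\ref{thr-1D-commutation} was obtained from its first two; that route is correct but considerably more laborious, so I would prefer the direct Fock-transform argument above.
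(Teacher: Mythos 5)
Your proposal is correct and follows essentially the same route as the paper's own proof: the paper likewise computes the Fock transforms of both $\mathfrak{K}_w\mathfrak{a}_k^{\dag}\mathfrak{a}_k\Phi$ and $\mathfrak{a}_k^{\dag}\mathfrak{a}_k\mathfrak{K}_w\Phi$ using Theorem~\ref{basic-theorem-1} and the identity $\widehat{\mathfrak{a}_k^{\dag}\mathfrak{a}_k\Phi}(\sigma)=\mathbf{1}_{\sigma}(k)\widehat{\Phi}(\sigma)$ from Proposition~\ref{prop-3-5}, and concludes by injectivity of the Fock transform. The only cosmetic difference is that the paper cites Proposition~\ref{prop-3-5} directly for that identity, whereas you locate it inside that proposition's proof; either attribution is fine.
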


\begin{proof}
From all $\Phi\in \mathcal{S}^*(M)$ and $\sigma\in \Gamma$, by Proposition~\ref{prop-3-5} we have
$\widehat{\mathfrak{a}_k^{\dag}\mathfrak{a}_k\Phi}(\sigma)= \mathbf{1}_{\sigma}(k)\widehat{\Phi}(\sigma)$,
which together with Theorem~\ref{basic-theorem-1} implies that
\begin{equation*}
  \widehat{\mathfrak{K}_w\mathfrak{a}_k^{\dag}\mathfrak{a}_k\Phi}(\sigma)
 = \vartheta_w(\sigma)\widehat{\mathfrak{a}_k^{\dag}\mathfrak{a}_k\Phi}(\sigma)
= \vartheta_w(\sigma)\mathbf{1}_{\sigma}(k)\widehat{\Phi}(\sigma)
= \mathbf{1}_{\sigma}(k)\widehat{\mathfrak{K}_w\Phi}(\sigma)
= \widehat{\mathfrak{a}_k^{\dag}\mathfrak{a}_k\mathfrak{K}_w\Phi}(\sigma).
\end{equation*}
Thus $\mathfrak{K}_w\mathfrak{a}_k^{\dag}\mathfrak{a}_k = \mathfrak{a}_k^{\dag}\mathfrak{a}_k \mathfrak{K}_w$.
\end{proof}

\section{Appendix}\label{sec-4}

In the appendix, we collect some notions and facts about weighted number (WN) operators in $\mathcal{L}^2(M)$,
which were essentially introduced and proven in \cite {wang-tang} (see Section IV therein).
We continue to use the notation, notions and assumptions made in Section~\ref{sec-2}.

Recall that $\mathcal{L}^2(M)$ is the complex Hilbert space consisting of all square integrable functionals of $M$,
and $\{Z_{\sigma} \mid \sigma\in \Gamma\}$ its canonical orthonormal basis (for details, see (\ref{eq-SquareIntegrable-Space}) and the paragraph standing with it).
For each nonnegative integer $k\geq 0$, there exists a bounded operator $\partial_k$ on $\mathcal{L}^2(M)$ such that
\begin{equation}\label{eq-4-1}
  \partial_kZ_{\sigma} = \mathbf{1}_{\sigma}(k)Z_{\sigma\setminus k},\quad \partial_k^*Z_{\sigma} = (1-\mathbf{1}_{\sigma}(k))Z_{\sigma\cup k},\quad \sigma\in\Gamma,
\end{equation}
where $\sigma\setminus k=\sigma\setminus \{k\}$, $\sigma\cup k=\sigma\cup \{k\}$ and $\partial_k^*$ denotes the adjoint of $\partial_k$.
Usually, $\partial_k$ and $\partial_k^*$ are known as the annihilation and creation operators on $\mathcal{L}^2(M)$ associated with $k$, respectively.

Annihilation and creation operators $\{\partial_k, \partial_k^* \mid k\geq 0\}$ on $\mathcal{L}^2(M)$ satisfy commutation relations of the following form
\begin{equation}\label{eq-4-2}
    \partial_k \partial_l = \partial_l\partial_k,\quad
    \partial_k^{\ast} \partial_l^{\ast} = \partial_l^{\ast}\partial_k^{\ast},\quad
    \partial_k^{\ast} \partial_l = \partial_l\partial_k^{\ast}\quad (k\neq l)
\end{equation}
and
\begin{equation}\label{eq-4-3}
  \partial_k^*\partial_k + \partial_k\partial_k^* =I,
\end{equation}
where $I$ means the identity operator on $\mathcal{L}^2(M)$. In the physical literature, $(\ref{eq-4-3})$ is usually known as the canonical anti-commutation relation
(CAR) in equal time.

A double vector series $\sum_{j,k=0}^{\infty}\xi_{jk}$ in $\mathcal{L}^2(M)$ is said to converge naturally if its sequence of square-array partial sums
\begin{equation*}
   s_n = \sum_{j,k=0}^n\xi_{jk}= \sum_{j=0}^n\sum_{k=0}^n\xi_{jk},\quad n\geq 0
\end{equation*}
converges in $\mathcal{L}^2(M)$ as $n\rightarrow \infty$. In that case, one writes $\sum_{j,k=0}^{\infty}\xi_{jk}= \lim_{n\to \infty}s_n$.
Let $w$ be a 2D-weight (see Definition~\ref{def-3-1} for its definition).
Then, for each $\sigma\in \Gamma$, the following double vector series converges naturally
\begin{equation}\label{eq-4-4}
   \sum_{j,k=0}^{\infty} w(j,k)\partial_k^*\partial_j\partial_j^*\partial_kZ_{\sigma}.
\end{equation}

Given a 2D-weight $w$, the 2D-weighted number operator (for short, the 2D-WN operator or the WN operator) $S_w$ associated with $w$
is the one in $\mathcal{L}^2(M)$ defined by
\begin{equation}\label{eq-4-5}
  S_w\xi=\sum_{j,k=0}^{\infty}w(j,k)\partial_k^*\partial_j\partial_j^*\partial_k\xi,\quad \xi \in \mathrm{Dom}\, S_w
\end{equation}
with
\begin{equation}\label{eq-4-6}
  \mathrm{Dom}\, S_w =\Big\{\, \xi \in \mathcal{L}^2(M) \Bigm| \sum_{j,k=0}^{\infty}w(j,k)\partial_k^*\partial_j\partial_j^*\partial_k\xi\ \ \mbox{converges naturally}               \,\Big\}.
\end{equation}
It follows immediately from (\ref{eq-4-5}) that $\{Z_{\sigma} \mid \sigma\in \Gamma\}\subset \mathrm{Dom}\, S_w$,
which implies that $S_w$ is densely-defined in $\mathcal{L}^2(M)$.

\begin{lemma} (Theorem~4.2 and Theorem~4.3 of \cite{wang-tang})\label{lem-4-1}\ \
Let $w$ be a 2D-weight. Then, a vector $\xi \in \mathcal{L}^2(M)$ falls into $\mathrm{Dom}\, S_w$ if and only if it satisfies that
\begin{equation}\label{eq-4-7}
  \sum_{\sigma \in \Gamma}\big(\vartheta_w(\sigma)\big)^2|\langle Z_{\sigma},\xi\rangle|^2<\infty,
\end{equation}
where $\vartheta_w$ is the spectral function associated with $w$ (see Definition~\ref{def-3-1}).
In that case, one has
\begin{equation}\label{eq-4-8}
  S_w\xi = \sum_{\sigma \in \Gamma}\vartheta_w(\sigma)\langle Z_{\sigma},\xi\rangle Z_{\sigma}.
\end{equation}
Additionally, $S_w$ is bounded if and only if $\sup_{\sigma \in \Gamma}\vartheta_w(\sigma)<\infty$.
\end{lemma}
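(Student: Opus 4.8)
The plan is to show that $S_w$ is, up to the domain issue, a \emph{diagonal} operator in the canonical ONB $\{Z_\sigma\mid\sigma\in\Gamma\}$, with each $Z_\sigma$ an eigenvector of eigenvalue $\vartheta_w(\sigma)$, and then to read off the domain characterization \eqref{eq-4-7}, the action \eqref{eq-4-8}, and the boundedness criterion directly from this diagonal form. First I would compute the action of the bounded operator $\partial_k^*\partial_j\partial_j^*\partial_k$ on a basis vector $Z_\sigma$ by iterating \eqref{eq-4-1}. Treating the cases $j\neq k$ and $j=k$ separately, in complete analogy with the proof of Proposition~\ref{prop-3-5}, a direct bookkeeping with the indicators $\mathbf{1}_\sigma(\cdot)$ yields
\[
\partial_k^*\partial_j\partial_j^*\partial_k Z_\sigma = e_\sigma(j,k)\,Z_\sigma,
\qquad
e_\sigma(j,k)=
\begin{cases}
(1-\mathbf{1}_\sigma(j))\mathbf{1}_\sigma(k), & j\neq k;\\
\mathbf{1}_\sigma(j), & j=k.
\end{cases}
\]

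Since $e_\sigma(j,k)\geq 0$ and $w(j,k)\geq 0$, and since the diagonal contribution $(1-\mathbf{1}_\sigma(j))\mathbf{1}_\sigma(j)$ vanishes, summing $w(j,k)e_\sigma(j,k)$ over $0\leq j,k\leq n$ produces truncated sums $\vartheta_w^{(n)}(\sigma):=\sum_{j,k=0}^n w(j,k)e_\sigma(j,k)$ that increase monotonically, as $n\to\infty$, to the spectral function $\vartheta_w(\sigma)$ of \eqref{eq-spectral-function}, which is finite by Proposition~\ref{prop-3-1}. This matches and refines the natural-convergence statement \eqref{eq-4-4}. Next, for an arbitrary $\xi=\sum_\sigma\langle Z_\sigma,\xi\rangle Z_\sigma\in\mathcal{L}^2(M)$, I would use that each factor $\partial_k,\partial_k^*$ has norm at most $1$ to interchange the finite sum over $0\leq j,k\leq n$ with the basis expansion, obtaining for the square-array partial sum of \eqref{eq-4-5} the closed form
\[
s_n=\sum_{j,k=0}^n w(j,k)\,\partial_k^*\partial_j\partial_j^*\partial_k\,\xi
=\sum_{\sigma\in\Gamma}\vartheta_w^{(n)}(\sigma)\,\langle Z_\sigma,\xi\rangle\,Z_\sigma,
\]
so that orthonormality gives $\|s_n\|^2=\sum_\sigma(\vartheta_w^{(n)}(\sigma))^2|\langle Z_\sigma,\xi\rangle|^2$.

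The equivalence now follows from two standard convergence arguments. For the forward implication, if $\xi\in\mathrm{Dom}\,S_w$ then by \eqref{eq-4-6} the sequence $(s_n)$ converges, hence $\sup_n\|s_n\|<\infty$; since $(\vartheta_w^{(n)}(\sigma))^2\nearrow(\vartheta_w(\sigma))^2$, monotone convergence for series gives $\sum_\sigma(\vartheta_w(\sigma))^2|\langle Z_\sigma,\xi\rangle|^2=\lim_n\|s_n\|^2<\infty$, which is \eqref{eq-4-7}. For the converse, assuming \eqref{eq-4-7}, the vector $\eta:=\sum_\sigma\vartheta_w(\sigma)\langle Z_\sigma,\xi\rangle Z_\sigma$ lies in $\mathcal{L}^2(M)$, and since $0\leq\vartheta_w^{(n)}(\sigma)\leq\vartheta_w(\sigma)$ one has $\|s_n-\eta\|^2=\sum_\sigma(\vartheta_w(\sigma)-\vartheta_w^{(n)}(\sigma))^2|\langle Z_\sigma,\xi\rangle|^2\to 0$ by dominated convergence (the summands tend to $0$ pointwise and are dominated by $(\vartheta_w(\sigma))^2|\langle Z_\sigma,\xi\rangle|^2$); thus $\xi\in\mathrm{Dom}\,S_w$ and $S_w\xi=\eta$, which is \eqref{eq-4-8}.

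Finally, the boundedness criterion drops out of the diagonal form: if $C:=\sup_\sigma\vartheta_w(\sigma)<\infty$, then \eqref{eq-4-7} holds for every $\xi$ with $\|S_w\xi\|^2\leq C^2\|\xi\|^2$, so $\mathrm{Dom}\,S_w=\mathcal{L}^2(M)$ and $S_w$ is bounded; conversely, if the supremum is infinite, choosing $\sigma_m$ with $\vartheta_w(\sigma_m)\to\infty$ and testing on the unit vectors $Z_{\sigma_m}$ gives $\|S_w Z_{\sigma_m}\|=\vartheta_w(\sigma_m)\to\infty$, so $S_w$ is unbounded. I expect the main obstacle to be the bookkeeping reduction in the second step: one must verify carefully that the mode of convergence being tested in \eqref{eq-4-6} is exactly the square-array (natural) convergence of the $s_n$ displayed above, and that the truncated weights genuinely satisfy $\vartheta_w^{(n)}(\sigma)\nearrow\vartheta_w(\sigma)$, so that the monotone and dominated convergence theorems apply. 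Once the diagonalization and this identification are secured, the remaining steps are routine.
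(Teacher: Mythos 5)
Your proposal is mathematically correct, but note that the paper itself offers no proof of Lemma~\ref{lem-4-1} to compare against: the lemma is quoted verbatim from Theorems~4.2 and~4.3 of the reference [wang-tang], and the appendix merely records it (together with the diagonal re-definition in Remark~\ref{rem-4-1} and the natural convergence fact \eqref{eq-4-4}) as known input. What you have written is therefore a self-contained reconstruction, and it holds up under scrutiny: the computation $\partial_k^*\partial_j\partial_j^*\partial_k Z_\sigma = e_\sigma(j,k)Z_\sigma$ follows from \eqref{eq-4-1} exactly as in the paper's analogous Proposition~\ref{prop-3-5} for the generalized operators; the identification of the truncated weights $\vartheta_w^{(n)}(\sigma)$ with the square-array partial sums of \eqref{eq-spectral-function} is legitimate because the diagonal terms $(1-\mathbf{1}_\sigma(j))\mathbf{1}_\sigma(j)w(j,j)$ vanish; the interchange $s_n=\sum_\sigma \vartheta_w^{(n)}(\sigma)\langle Z_\sigma,\xi\rangle Z_\sigma$ is justified by continuity of each bounded operator $\partial_k^*\partial_j\partial_j^*\partial_k$ applied to the basis expansion of $\xi$; and the monotone/dominated convergence arguments then deliver both directions of the domain characterization \eqref{eq-4-7}, the formula \eqref{eq-4-8}, and the boundedness criterion (the unboundedness half using that each $Z_{\sigma_m}$ lies in $\mathrm{Dom}\,S_w$ with $S_wZ_{\sigma_m}=\vartheta_w(\sigma_m)Z_{\sigma_m}$). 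The one point you flagged as a possible obstacle — that \eqref{eq-4-6} tests precisely the convergence of the square-array sums $s_n$ — is indeed settled by the paper's definition of natural convergence, so no gap remains. In short: correct proof, necessarily by a route of your own since the paper delegates this result to its bibliography, and your diagonalization is consistent with (indeed explains) the paper's Remark~\ref{rem-4-1}.
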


\begin{remark}\label{rem-4-1}
Due to the above lemma, the 2D-WN operator $S_w$ associated with a 2D-weight $w$ can be equivalently redefined as
\begin{equation}\label{eq-weighted-operator-in-L2}
  S_w\xi = \sum_{\sigma\in \Gamma} \vartheta_w(\sigma)\langle Z_{\sigma},\xi\rangle Z_{\sigma},\quad \xi \in \mathrm{Dom}\,S_w
\end{equation}
with
\begin{equation}\label{eq-4-10}
  \mathrm{Dom}\,S_w=\Big\{\xi \in \mathcal{L}^2(M) \Bigm| \sum_{\sigma\in \Gamma} (\vartheta_w(\sigma))^2|\langle Z_{\sigma},\xi\rangle|^2<\infty \Big\},
\end{equation}
which implies that $S_w$ is positive and self-adjoint.
\end{remark}

For a bounded nonnegative function $u$ on $\mathbb{N}$, the 1D-weighted number operator (for short, the 1D-WN operator)
$N_u$ associated with $u$ is the one in $\mathcal{L}^2(M)$ defined by
\begin{equation}\label{eq-4-11}
  N_u\xi = \sum_{\sigma \in \Gamma}\#_u(\sigma)\langle Z_{\sigma},\xi\rangle Z_{\sigma},\quad \mathrm{Dom}\,N_u
\end{equation}
with
\begin{equation}\label{eq-4-12}
 \mathrm{Dom}\,N_u
 = \Big\{\xi \in  \mathcal{L}^2(M) \Bigm| \sum_{\sigma \in \Gamma}\big(\#_u(\sigma)\big)^2|\langle Z_{\sigma},\xi\rangle|^2<\infty\Big\},
\end{equation}
where $\#_u(\sigma)=\sum_{k=0}^{\infty}\mathbf{1}_{\sigma}(k)u(k)$.

Let $u$ be a bounded nonnegative function on $\mathbb{N}$. Then, there exists a 2D-weight $w^{(u)}$ on $\mathbb{N}\times \mathbb{N}$ such that
\begin{equation}\label{eq-4-13}
  w^{(u)}(j,k)=
  \left\{
    \begin{array}{ll}
      u(j), & \hbox{$j=k$, $(j,k)\in \mathbb{N}\times \mathbb{N}$;} \\
      0, & \hbox{$j\neq k$, $(j,k)\in \mathbb{N}\times \mathbb{N}$}
    \end{array}
  \right.
\end{equation}
and $N_u= S_{w^{(u)}}$, which together with (\ref{eq-4-4}) gives a representation of $N_u$ of the following form
\begin{equation}\label{eq-4-14}
 N_u\xi=\sum_{k=0}^{\infty}u(k)\partial_k^*\partial_k\xi,\quad \xi \in \mathrm{Dom}\, N_u.
\end{equation}
Thus, 1D-WN operators are actually a special class of 2D-WN operators.

The 1D-WN operator $N_u$ associated with the constant function $u(k)\equiv 1$ is usually known as the number operator
and is denoted by $N$. Clearly, the number operator $N$ can also be directly defined as
\begin{equation}\label{eq-4-15}
  N\xi =  \sum_{\sigma \in \Gamma}\#(\sigma)\langle Z_{\sigma},\xi\rangle Z_{\sigma},\quad \mathrm{Dom}\,N,
\end{equation}
where the domain $\mathrm{Dom}\,N$ is given by
\begin{equation}\label{eq-4-16}
   \mathrm{Dom}\,N
 = \Big\{\xi \in  \mathcal{L}^2(M) \Bigm| \sum_{\sigma \in \Gamma}\big(\#(\sigma)\big)^2|\langle Z_{\sigma},\xi\rangle|^2<\infty\Big\}.
\end{equation}
The next result shows that the domain of the number operator $N$ can play an important role
in comparing two WN operators.

\begin{lemma}(Theorem~4.6 of \cite{wang-tang})\label{thr-4-6}\ \
Let $w$ be a 2D-weight. Then $\mathrm{Dom}\, N$ is a core for $S_w$.
In particular, for all bounded nonnegative function $u$ on $\mathbb{N}$, $\mathrm{Dom}\, N$ is a core for $N_u$.
\end{lemma}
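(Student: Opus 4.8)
The plan is to exploit the diagonal structure of $S_w$ with respect to the canonical ONB $\{Z_{\sigma}\mid \sigma\in\Gamma\}$. For such a diagonal operator the finite linear span of the basis is readily shown to be a core, and my strategy is to establish this first and then to sandwich $\mathrm{Dom}\,N$ between that span and the full domain $\mathrm{Dom}\,S_w$.

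First I would record two preliminary facts. By Remark~\ref{rem-4-1}, the operator $S_w$ is positive and self-adjoint, hence closed, so that the notion of a core is meaningful and one may work with the graph norm $\|\xi\|_{S_w}^2=\|\xi\|^2+\|S_w\xi\|^2$. Moreover, using the bound $\vartheta_w(\sigma)\leq 2\alpha_w\#(\sigma)$ supplied by Proposition~\ref{prop-3-1}, I would check the inclusion $\mathrm{Dom}\,N\subset \mathrm{Dom}\,S_w$: for any $\xi\in\mathrm{Dom}\,N$ one has
\[
\sum_{\sigma\in\Gamma}\big(\vartheta_w(\sigma)\big)^2|\langle Z_{\sigma},\xi\rangle|^2\leq 4\alpha_w^2\sum_{\sigma\in\Gamma}\big(\#(\sigma)\big)^2|\langle Z_{\sigma},\xi\rangle|^2<\infty,
\]
so that $\xi\in\mathrm{Dom}\,S_w$ by Remark~\ref{rem-4-1}.

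Next, I would show that the finite linear span $\mathcal{D}_0=\mathrm{span}\{Z_{\sigma}\mid \sigma\in\Gamma\}$ is a core for $S_w$. Since $\#(\sigma)<\infty$ for every $\sigma\in\Gamma$, one has $\mathcal{D}_0\subset\mathrm{Dom}\,N$. Now fix $\xi\in\mathrm{Dom}\,S_w$ and, using that $\Gamma$ is countable, enumerate it as $\{\sigma_1,\sigma_2,\dots\}$ and set $\xi_n=\sum_{i=1}^n\langle Z_{\sigma_i},\xi\rangle Z_{\sigma_i}\in\mathcal{D}_0$. Then $\|\xi-\xi_n\|^2$ and $\|S_w\xi-S_w\xi_n\|^2=\sum_{i>n}\big(\vartheta_w(\sigma_i)\big)^2|\langle Z_{\sigma_i},\xi\rangle|^2$ are tails of convergent series---the latter converging precisely because $\xi\in\mathrm{Dom}\,S_w$---and hence both tend to $0$. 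Thus $\xi_n\to\xi$ in the graph norm, proving that $\mathcal{D}_0$ is a core. Since $\mathcal{D}_0\subset\mathrm{Dom}\,N\subset\mathrm{Dom}\,S_w$ and $S_w$ is closed, the graph-norm closure of $S_w$ restricted to $\mathrm{Dom}\,N$ already contains that of $S_w|_{\mathcal{D}_0}$, which is $S_w$ itself; hence $\mathrm{Dom}\,N$ is a core as well. The final assertion then follows at once by taking $w=w^{(u)}$ and invoking $N_u=S_{w^{(u)}}$.

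The argument is essentially routine, and the only point demanding care is the sandwiching step: one must use the closedness (self-adjointness) of $S_w$ to conclude that a subspace lying between a core and the full domain is again a core, rather than merely that it is dense in $\mathcal{L}^2(M)$. Keeping the two convergences (in $\mathcal{L}^2(M)$ and of the images under $S_w$) simultaneously under control via the graph norm is what makes the diagonal structure do the work.
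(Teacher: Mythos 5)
Your proof is correct, but note that the paper itself offers no proof of this lemma to compare against: it is stated as a quotation of Theorem~4.6 of \cite{wang-tang}, with the appendix explicitly deferring all proofs to that reference. Your argument is therefore judged on its own merits, and it is complete and self-contained: the inclusion $\mathrm{Dom}\,N\subset\mathrm{Dom}\,S_w$ follows correctly from the bound $\vartheta_w(\sigma)\leq 2\alpha_w\#(\sigma)$ of Proposition~\ref{prop-3-1} combined with the domain characterization in Lemma~\ref{lem-4-1}; the truncation argument shows that the finite span $\mathcal{D}_0$ of the canonical ONB is a core, using that both $\|\xi-\xi_n\|^2$ and $\|S_w\xi-S_w\xi_n\|^2$ are tails of convergent series (the diagonal action $S_wZ_\sigma=\vartheta_w(\sigma)Z_\sigma$ makes the second computation legitimate); and the sandwiching step is handled properly, since for a closed operator the graph of $S_w|_{\mathrm{Dom}\,N}$ is squeezed between the graph of $S_w|_{\mathcal{D}_0}$, whose closure is all of the graph of $S_w$, and the closed graph of $S_w$ itself. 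The reduction of the 1D statement to the 2D one via $N_u=S_{w^{(u)}}$ matches the identification made in the paper at (\ref{eq-4-13})--(\ref{eq-4-14}), where one only needs to observe that $w^{(u)}$ is indeed a 2D-weight because $u$ is bounded. This truncation-plus-sandwich route is the natural proof for an operator that is diagonal with respect to the canonical ONB, and is very likely the same mechanism used in the cited source.
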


From (\ref{eq-4-5}), one can see that 2D-WN operators are essentially generated by annihilation and creation operators
$\{\partial_k, \partial_k^* \mid k\geq 0\}$ on $\mathcal{L}^2(M)$. This, together with the commutation relations described by (\ref{eq-4-2}) and (\ref{eq-4-3}),
suggests that a 2D-WN operator $S_w$ and an annihilation operator $\partial_k$ (or a creation operator $\partial_k^*$) must satisfy
some kind of meaningful commutation relations. This is indeed the case.

\begin{lemma}(Theorem 4.7 and Theorem 4.8 of \cite{wang-tang})\label{lem-4-3}\ \
Let $u$ be a bounded nonnegative function on $\mathbb{N}$. Then,
for all $n\geq 0$, $\mathrm{Dom}\,N_u$ is an invariant subspace of both $\partial_n$ and $\partial_n^*$, and moreover
it holds true on $\mathrm{Dom}\,N_u$ that
\begin{equation}\label{eq-4-17}
  N_u\partial_n = \partial_nN_u-u(n)\partial_n, \quad N_u\partial_n^*= \partial_n^*N_u+ u(n)\partial_n^*.
\end{equation}
\end{lemma}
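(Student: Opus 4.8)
The plan is to reduce the entire statement to an identity between Fourier coefficients with respect to the canonical ONB $\{Z_\sigma\mid\sigma\in\Gamma\}$. First I would record, directly from (\ref{eq-4-1}) together with the boundedness (hence continuity) of $\partial_n$ and $\partial_n^*$, the coefficient formulas $\langle Z_\tau,\partial_n\xi\rangle=(1-\mathbf{1}_\tau(n))\langle Z_{\tau\cup n},\xi\rangle$ and $\langle Z_\tau,\partial_n^*\xi\rangle=\mathbf{1}_\tau(n)\langle Z_{\tau\setminus n},\xi\rangle$, valid for every $\xi\in\mathcal{L}^2(M)$ and $\tau\in\Gamma$. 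Alongside these I would note the two elementary arithmetic identities for the weight $\#_u$: for $n\notin\sigma$ one has $\#_u(\sigma\cup n)=\#_u(\sigma)+u(n)$, and for $n\in\sigma$ one has $\#_u(\sigma\setminus n)=\#_u(\sigma)-u(n)$. These four facts carry the whole proof.

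To prove invariance of $\mathrm{Dom}\,N_u$ under $\partial_n$, I would substitute the coefficient formula into the defining series (\ref{eq-4-12}), reindex by $\sigma=\tau\cup n$ (so that $\tau$ ranges over the sets avoiding $n$ precisely when $\sigma$ ranges over the sets containing $n$), and use $\#_u(\sigma\setminus n)=\#_u(\sigma)-u(n)\le\#_u(\sigma)$, which holds since $u\ge 0$, to bound the resulting series by $\sum_\sigma(\#_u(\sigma))^2|\langle Z_\sigma,\xi\rangle|^2<\infty$. For $\partial_n^*$ the analogous reindexing by $\sigma=\tau\setminus n$ produces the factor $(\#_u(\sigma)+u(n))^2$, which is not dominated by $(\#_u(\sigma))^2$; here I would apply the inequality $(a+b)^2\le 2a^2+2b^2$ to split the series into $2\sum_\sigma(\#_u(\sigma))^2|\langle Z_\sigma,\xi\rangle|^2+2u(n)^2\sum_\sigma|\langle Z_\sigma,\xi\rangle|^2$, both terms being finite because $\xi\in\mathrm{Dom}\,N_u\subset\mathcal{L}^2(M)$. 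This establishes $\partial_n\xi,\partial_n^*\xi\in\mathrm{Dom}\,N_u$.

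With invariance in hand, both commutation relations in (\ref{eq-4-17}) reduce to matching Fourier coefficients. For the first, I would compute $\langle Z_\tau,(\partial_nN_u-u(n)\partial_n)\xi\rangle=(1-\mathbf{1}_\tau(n))[\#_u(\tau\cup n)-u(n)]\langle Z_{\tau\cup n},\xi\rangle$ and then invoke $\#_u(\tau\cup n)-u(n)=\#_u(\tau)$ on the set $\{n\notin\tau\}$ forced by the factor $1-\mathbf{1}_\tau(n)$, obtaining exactly $\#_u(\tau)\langle Z_\tau,\partial_n\xi\rangle=\langle Z_\tau,N_u\partial_n\xi\rangle$. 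The creation case is symmetric, using $\#_u(\tau\setminus n)+u(n)=\#_u(\tau)$ on $\{n\in\tau\}$. Since $\{Z_\tau\mid\tau\in\Gamma\}$ is an ONB, equality of all coefficients yields the operator identities on $\mathrm{Dom}\,N_u$.

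The main obstacle is analytic rather than algebraic: because $N_u$ is unbounded, one must secure that each composite operator appearing in (\ref{eq-4-17}) is genuinely defined before comparing it coefficientwise. The domain-invariance step for $\partial_n^*$ is the delicate point, since there the weight $\#_u$ strictly increases under $\sigma\mapsto\sigma\cup n$; the splitting via $(a+b)^2\le 2a^2+2b^2$ is exactly what keeps the bound summable and exploits $\xi\in\mathcal{L}^2(M)$ in addition to $\xi\in\mathrm{Dom}\,N_u$. Once invariance is confirmed, every series manipulation above is absolutely convergent and the coefficientwise comparison is rigorous.
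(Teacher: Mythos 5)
Your proof is correct and complete. Be aware, though, that there is no in-paper proof to compare it against: Lemma~\ref{lem-4-3} is stated purely as a citation of Theorems 4.7 and 4.8 of \cite{wang-tang}, and the appendix explicitly defers all such proofs to that reference. What can be said is that your argument is precisely the one suggested by the machinery the appendix does set up: once $N_u$ is given in the diagonal form (\ref{eq-4-11})--(\ref{eq-4-12}), everything reduces to the coefficient formulas $\langle Z_\tau,\partial_n\xi\rangle=(1-\mathbf{1}_\tau(n))\langle Z_{\tau\cup n},\xi\rangle$ and $\langle Z_\tau,\partial_n^*\xi\rangle=\mathbf{1}_\tau(n)\langle Z_{\tau\setminus n},\xi\rangle$ (legitimate by the boundedness of $\partial_n$ asserted around (\ref{eq-4-1})), together with the additivity of $\#_u$ under adjoining or deleting the point $n$. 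You also correctly isolate the one genuine analytic issue: since $N_u$ is unbounded, the identities (\ref{eq-4-17}) are not even well posed until one shows $\partial_n$ and $\partial_n^*$ leave $\mathrm{Dom}\,N_u$ invariant, and the creation case is the delicate one because the weight increases under $\sigma\mapsto\sigma\cup n$; your splitting $(\#_u(\sigma)+u(n))^2\le 2(\#_u(\sigma))^2+2u(n)^2$, with the second series controlled by $\xi\in\mathcal{L}^2(M)$, handles this exactly. The subsequent coefficient comparison is rigorous because the prefactors $1-\mathbf{1}_\tau(n)$ and $\mathbf{1}_\tau(n)$ localize the identities $\#_u(\tau\cup n)-u(n)=\#_u(\tau)$ and $\#_u(\tau\setminus n)+u(n)=\#_u(\tau)$ to the sets of $\tau$ on which they are valid, and equality of all Fourier coefficients against the ONB $\{Z_\tau\}$ gives equality in $\mathcal{L}^2(M)$. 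The only cosmetic addition worth making is to note $0\le\#_u(\sigma\setminus n)\le\#_u(\sigma)$ before squaring in the annihilation case; otherwise nothing is missing.
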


Let $w$ be a 2D-weight. Then, for each $n\geq 0$, both $w(n,\cdot)$ and $w(\cdot,n)$ are bounded nonnegative functions on $\mathbb{N}$,
hence $N_{w(n,\cdot)}$ and $N_{w(\cdot,n)}$ make sense as the 1D-WN operators associated with $w(n,\cdot)$ and $w(\cdot,n)$ respectively.

\begin{lemma}(Theorem 4.10 and Theorem 4.11 of \cite{wang-tang})\label{thr-CR-A}\ \
Let $w$ be a 2D-weight. Then, for all $n\geq 0$, it holds true on $\mathrm{Dom}\, N$ that
\begin{equation}\label{eq-CR-A}
  S_w\partial_n
      = \partial_nS_w
        + \partial_nN_{w(\cdot,n)}
    +\partial_nN_{w(n,\cdot)} -\Big[ 2w(n,n) +\sum_{j=0}^{\infty}w(j,n)\Big]\partial_n
\end{equation}
and
\begin{equation}\label{eq-CR-C}
  S_w\partial_n^* = \partial_n^*S_w  - \partial_n^* N_{w(\cdot,n)}- \partial_n^*N_{w(n,\cdot)}
   + \Big(\sum_{j=0}^{\infty}w(j,n)\Big)\partial_n^*.
\end{equation}
\end{lemma}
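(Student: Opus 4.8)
The plan is to verify both operator identities by testing them against the canonical orthonormal basis $\{Z_\sigma \mid \sigma\in\Gamma\}$ and then extending by linearity, exploiting the fact that $S_w$, $N_{w(\cdot,n)}$ and $N_{w(n,\cdot)}$ all act \emph{diagonally} on this basis while $\partial_n$ and $\partial_n^*$ act as one-site shifts. Recall from Remark~\ref{rem-4-1} (see (\ref{eq-weighted-operator-in-L2})) and (\ref{eq-4-11}) that $S_w Z_\sigma = \vartheta_w(\sigma) Z_\sigma$, $N_{w(\cdot,n)} Z_\sigma = \#_{w(\cdot,n)}(\sigma) Z_\sigma$ and $N_{w(n,\cdot)} Z_\sigma = \#_{w(n,\cdot)}(\sigma) Z_\sigma$, whereas by (\ref{eq-4-1}) one has $\partial_n Z_\sigma = \mathbf{1}_\sigma(n) Z_{\sigma\setminus n}$ and $\partial_n^* Z_\sigma = (1-\mathbf{1}_\sigma(n)) Z_{\sigma\cup n}$. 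This diagonal-plus-shift structure is exactly what makes the two-sided comparison tractable.

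Before testing on basis vectors I would settle the domain bookkeeping, so that every composition occurring in (\ref{eq-CR-A}) and (\ref{eq-CR-C}) is genuinely defined on $\mathrm{Dom}\, N$. By Proposition~\ref{prop-3-1} one has $\vartheta_w(\sigma)\leq 2\alpha_w\#(\sigma)$, and clearly $\#_{w(\cdot,n)}(\sigma)$, $\#_{w(n,\cdot)}(\sigma)\leq \beta\,\#(\sigma)$ for a suitable constant $\beta$; comparing (\ref{eq-4-10}) and (\ref{eq-4-16}) this yields $\mathrm{Dom}\, N\subset \mathrm{Dom}\, S_w$ and $\mathrm{Dom}\, N\subset \mathrm{Dom}\, N_{w(\cdot,n)}\cap \mathrm{Dom}\, N_{w(n,\cdot)}$. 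Since $\partial_n$ and $\partial_n^*$ send each $Z_\sigma$ to a single basis vector indexed by $\sigma\setminus n$ or $\sigma\cup n$, and $\#(\sigma\setminus n)\le\#(\sigma)$ while $\#(\sigma\cup n)=\#(\sigma)+1$, a reindexing of the $\ell^2$-sum defining $\mathrm{Dom}\, N$ in (\ref{eq-4-16}) shows that both $\partial_n$ and $\partial_n^*$ map $\mathrm{Dom}\, N$ into itself. In particular $\partial_n\xi,\,\partial_n^*\xi\in \mathrm{Dom}\, N\subset \mathrm{Dom}\, S_w$ for every $\xi\in\mathrm{Dom}\, N$, so all products $S_w\partial_n$, $\partial_n S_w$, $\partial_n N_{w(\cdot,n)}$, and their creation counterparts are well-defined there.

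The core of the argument is then a coefficient-wise comparison on $Z_\sigma$. For the annihilation relation I compute $S_w\partial_n Z_\sigma = \mathbf{1}_\sigma(n)\,\vartheta_w(\sigma\setminus n)\, Z_{\sigma\setminus n}$, while the right-hand side of (\ref{eq-CR-A}) applied to $Z_\sigma$ collapses, via the diagonal actions above, to $\mathbf{1}_\sigma(n)\big[\vartheta_w(\sigma)+\#_{w(\cdot,n)}(\sigma)+\#_{w(n,\cdot)}(\sigma)-2w(n,n)-\sum_{j}w(j,n)\big] Z_{\sigma\setminus n}$; these two coincide precisely by the scalar identity of Proposition~\ref{prop-3-14}. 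The creation relation is symmetric: $S_w\partial_n^* Z_\sigma = (1-\mathbf{1}_\sigma(n))\,\vartheta_w(\sigma\cup n)\, Z_{\sigma\cup n}$, the right-hand side of (\ref{eq-CR-C}) on $Z_\sigma$ produces the factor $(1-\mathbf{1}_\sigma(n))\big[\vartheta_w(\sigma)-\#_{w(\cdot,n)}(\sigma)-\#_{w(n,\cdot)}(\sigma)+\sum_{j}w(j,n)\big]$, and equality is exactly Proposition~\ref{prop-3-12}.

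Finally I would promote the basis-vector identities to all of $\mathrm{Dom}\, N$. For $\xi\in\mathrm{Dom}\, N$ the domain inclusions above guarantee that $\partial_n$ (bounded) may be applied term by term and that $S_w$, being self-adjoint and hence closed (Remark~\ref{rem-4-1}), may be passed through the resulting $\ell^2$-norm-convergent expansion; the reindexing $\sigma\mapsto\sigma\setminus n$ (respectively $\sigma\mapsto\sigma\cup n$) is a bijection onto $\{\tau:n\notin\tau\}$ (respectively $\{\tau:n\in\tau\}$), so no cancellation or double counting occurs. Thus the coefficient-wise agreement just established upgrades to the operator identities (\ref{eq-CR-A}) and (\ref{eq-CR-C}) on $\mathrm{Dom}\, N$. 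The genuine content of the lemma is carried entirely by the combinatorial identities of Propositions~\ref{prop-3-12} and~\ref{prop-3-14}; consequently the main obstacle here is the functional-analytic bookkeeping, namely verifying the domain inclusions and the $\partial_n$-, $\partial_n^*$-invariance of $\mathrm{Dom}\, N$, and justifying the term-by-term passage of the unbounded $S_w$ through the basis expansion so that testing against $\{Z_\sigma\}$ suffices.
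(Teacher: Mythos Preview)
The paper does not supply its own proof of this lemma; it is quoted verbatim from \cite{wang-tang} as a known result in the Appendix, so there is no in-paper argument to compare against directly. That said, your approach is correct and is precisely the $\mathcal{L}^2(M)$ mirror of what the paper does for the $\mathcal{S}^*(M)$-analogues (Theorems~\ref{thr-2D-commutation-annihilation} and~\ref{thr-2D-commutation-creation}): reduce to the scalar identities of Propositions~\ref{prop-3-12} and~\ref{prop-3-14} by testing on the canonical basis, using that $S_w$, $N_{w(\cdot,n)}$, $N_{w(n,\cdot)}$ are diagonal while $\partial_n$, $\partial_n^*$ are one-site shifts. Your domain bookkeeping (the inclusions $\mathrm{Dom}\,N\subset\mathrm{Dom}\,S_w$ etc.\ via Proposition~\ref{prop-3-1}, and the $\partial_n$-, $\partial_n^*$-invariance of $\mathrm{Dom}\,N$) is exactly what is needed and is not made explicit in the paper's $\mathcal{S}^*(M)$ proofs because there continuity on the whole space makes such issues disappear; in the $\mathcal{L}^2(M)$ setting this care is genuinely necessary, and your use of closedness of $S_w$ to pass it through the basis expansion is the right tool. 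The only cosmetic remark is that Propositions~\ref{prop-3-12} and~\ref{prop-3-14} are stated in Section~\ref{sec-3} rather than the Appendix, but since they are proved from scratch by elementary manipulation of $\vartheta_w$ there is no logical circularity in invoking them here.
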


It should be emphasized that, in general, the commutation relations described by (\ref{eq-4-17}), (\ref{eq-CR-A}) and (\ref{eq-CR-C}) make no sense on the whole space $\mathcal{L}^2(M)$.

\section*{Acknowledgement}

The authors are extremely grateful to the referees for their valuable comments and suggestions on improvement of the first version of the present paper.
This work is supported by National Natural Science Foundation of China (Grant No. 11861057, 12261080).


\begin{thebibliography}{99}


\bibitem{albe} S. Albeverio, Yu. L. Daletsky, Yu. G. Kondratiev and L. Streit,  Non-Gaussian infinite dimensional
analysis, J. Funct. Anal. 138 (1996), 311--350.

\bibitem{alicki} R. Alicki and K. Lendi, Quantum Dynamical Semigroups and Applications, Lecture Notes in Physics 286, Springer-Verlag, Berlin, 2007.

\bibitem{bar} A. Barhoumi, H. Ouerdiane and A. Riahi, Pascal white noise calculus, Stochastics 81 (2009), 323--343.

\bibitem{becnel} J. Becnel, Equivalence of topologies and Borel fields for countably-Hilbert spaces, Proc. Amer. Math. Soc. 134 (2006), 581-590.

\bibitem{nunno} G. Di Nunno, B. Oksendal and F. Proske, White noise analysis for L\'{e}vy processes, J. Funct.
Anal. 206 (2004), 109--148.

\bibitem{emery}  M. \'{E}mery, A discrete approach to the chaotic representation property, in: S\'{e}minaire de Probabilit\'{e}s, XXXV,
in: Lecture Notes in Math., vol. 1755, Springer, Berlin, 2001, pp. 123--138.

\bibitem{gelfand} I.M. Gel'fand and N.Ya. Vilenkin, Generalized Functions vol. 4, Applications of Harmonic Analysis, Academic Press, New York, 1964.

\bibitem{hida} T. Hida, H.-H. Kuo, J. Potthoff and L. Streit, White Noise: An Infinite Dimensional Calculus, Kluwer Academic Publishers,
Dordrecht, The Netherlands, 1993.

\bibitem{hu} Y. Hu and B. Oksendal, Fractional white noise calculus and applications to finance, Infin.
Dimens. Anal. Quantum Probab. Relat. Top. 6 (2003), 1--32.

\bibitem{huang} Z.Y. Huang and J.A. Yan, Introduction to Infinite Dimensional Stochastic Analysis, Kluwer Academic Publishers, Dordrecht,
The Netherlands, 1999.

\bibitem{ito} Y. Ito, Generalized poisson functionals, Probab. Theory and Related Fields 77 (1988), 1--28.

\bibitem{ji} U.C. Ji, N. Obata and H. Ouerdiane, Analytic characterization of generalized Fock space operators as
two-variable entire functions with growth condition, Infin. Dimen. Anal. Quantum Probab. Relat. Top. 5 (2002), 395--407.

\bibitem{krokowski} K. Krokowski, A. Reichenbachs, and Ch. Th\"{a}le, Berry-Esseen bounds and multivariate limit
theorems for functionals of Rademacher sequences, Ann. Inst. H. Poincar\'{e} Probab. Statist. 52 (2016), no. 2, 763--803.

\bibitem{lee} Y.J. Lee and H.H. Shih, The Segal-Bargmann transformfor L\'{e}vy functionals, J. Funct. Anal. 168 (1999), 46--83.

\bibitem{nourdin} I. Nourdin, G. Peccati, and G. Reinert, Steins method and stochastic analysis of Rademacher functionals,
Electron. J. Probab. 15 (2010), no. 55, 1703--1742.

\bibitem{obata} N. Obata, White Noise Calculus and Fock Space, Springer-Verlag, Berlin, 1994.

\bibitem{partha} K. R. Parthasarathy, An Introduction to Quantum Stochastic Calculus, Monographs in Mathematics, Birkh\"{a}user, Basel, 1992.

\bibitem{privault} N. Privault, Stochastic analysis of Bernoulli processes, Probab. Surv. 5 (2008), 435--483.

\bibitem{w-c-l} C.S. Wang, H.F. Chai, and Y.C. Lu, Discrete-time quantum Bernoulli noises, J. Math. Phys. 51 (2010), no. 5, 053528.

\bibitem{wang-chen-1} C.S. Wang and J.S. Chen, Characterization theorems for generalized functionals of discrete-time normal martingale,
J. Funct. Spaces, vol. 2015, Article ID 714745, 6 pages, 2015.

\bibitem{wang-chen-2} C.S. Wang and J.S. Chen, Convergence theorems for generalized functional sequences of discrete-time normal martingales,
J. Funct. Spaces, vol. 2015, Article ID 360679, 7 pages, 2015.

\bibitem{wang-chen-3} C.S.Wang and J.S. Chen, A characterization of operators on functionals of discrete-time normal
martingales, Stoch. Anal. Appl. 35 (2017), no. 2, 305--316.

\bibitem{wang-Lin} C.S. Wang, S. Lin and A.L. Huang, Clark-Ocone formula for generalized functionals of discrete-time normal noises,
J. Funct. Spaces, vol. 2018, Article ID 2954695, 9 pages, 2018.

\bibitem{wang-tang} C.S. Wang, Y.L. Tang and S.L. Ren, Weighted number operators on Bernoulli functionals and quantum exclusion semigroups,
J. Math. Phys. 60 (2019), no. 11, 113506.

\end{thebibliography}
\end{document}